\def\maketag@@@#1{\hbox{\m@th\normalfont\normalsize#1}}
\newtheorem{thm}{Theorem}[section]
\newtheorem{prop}[thm]{Proposition}
\numberwithin{equation}{section}
\begin{document}

\begin{center}\Large\textbf{Analytical solution of the incompressible Navier-Stokes equations}\end{center}
\normalsize
\textbf{F. Salmon}\\
UMR CNRS 5295 I2M, Bordeaux University, France\\
Fabien.Salmon@u-bordeaux.fr


\begin{abstract}
The Navier-Stokes equations, which govern fluid motions, are not resolved yet. This investigation relates to the application of the power series method to the incompressible Navier-Stokes equations. This method involves replacing variables by power series in the equations. The resulting relations allow the computation of the series coefficients from the initial and boundary conditions. All the steps of applying the method to the Navier-Stokes equations are detailed and the exact analytical solution is then established. The last part discusses the domain of convergence of the solution.
\end{abstract}

%
\vspace{2pc}
\noindent{\it Keywords}: Navier-Stokes, incompressible, analytical, exact solution, power series
%
%
%
%

\section{Introduction}
The Navier-Stokes equations describe the fluid motion. They were named in honor of Claude-Louis Navier and George Gabriel Stokes. At present, the general solution of this set of equations remains unknown. Except in specific cases with analytical solution, fluid motion problems request high performance computing. Besides, the Navier-Stokes existence and smoothness problem belongs to the Millennium Prize Problems.
\newline \newline
Many studies handle the existence and uniqueness of solutions to the Navier-Stokes equations. The existence of weak solutions to the incompressible case were proved in \cite{Leray}. Moreover, the weak solutions of the compressible equations were analysed with the purpose of characterizing them \cite{B3}, \cite{B4}. Regarding strong solutions, numerous studies investigated their existence as well as their properties \cite{B1}, \cite{B5}. Likewise, scientists studied solutions of particular problems with specified boundary conditions \cite{B2}. \newline \newline
This article features the formulation of a general analytical solution to the incompressible Navier-Stokes equations. The resolution is based on the power series method \cite{livre} which involves replacing variables by power series in the equations. The power series coefficients are then determined by solving the recurrence relations. This approach provides a power series solution as long as it exists. \newline \newline
First, an effortless 2D differential equation gives an overview of the method. To make clear the restriction of this technique, the differences between this approach and a general one applicable to the simple example are indicated. Second, the whole process of applying the method to the Navier-Stokes equations is outlined. The resulting analytical solution is then delivered. This publication also contains a discussion about the domain of convergence.

\section{Description of the power series method}
\label{Method}

The power series method enables the construction of power series solutions to differential equations. The approach is based on replacing variables by power series in the equations. Then, the power series coefficients are computed by the reformulated equations. This mathematical technique is often adopted for one-dimensional differential equations but rarely for multidimensional equations. The complexity of calculations actually increases with the number of dimensions but the method remains relevant. This section outlines the power series method through the resolution of the elementary two-dimensional differential equation
\begin{equation}
\frac{\partial f}{\partial x}-\frac{\partial f}{\partial y}=0
\label{EqDiff}
\end{equation}

The solution is sought in $\mathbb{R}^+\times\mathbb{R}^+$. 
Before solving the equation with power series, the resolution is performed through a substitution. The substitution $\psi(x,y)=(u,v)=(x,x+y)$ yields (with $f(x,y)=g(u,v)$)
\begin{equation}
\left\lbrace
\begin{array}{lc}
\frac{\partial f}{\partial x}=\frac{\partial g}{\partial u}\frac{\partial u}{\partial x}+\frac{\partial g}{\partial v}\frac{\partial v}{\partial x}=\frac{\partial g}{\partial u}+\frac{\partial g}{\partial v}\\
\frac{\partial f}{\partial y}=\frac{\partial g}{\partial u}\frac{\partial u}{\partial y}+\frac{\partial g}{\partial v}\frac{\partial v}{\partial y}=\frac{\partial g}{\partial v}\\
\end{array}\right.
\end{equation}
Then, the differential equation becomes
\begin{equation}
\frac{\partial g}{\partial u}=0
\end{equation}
Thus, the solutions to the equation \eqref{EqDiff} are the functions depending on $x+y$: $f(x,y)=g(u,v)=\phi(v)=\phi(x+y)$. \newline
Now, the power series method is applied to \eqref{EqDiff}. The function $f$ is assumed to be of the form
\begin{equation}
f(x,y)=\sum_{n=0}^\infty\sum_{m=0}^\infty a_{n,m}x^ny^m
\end{equation}
The substitution of the power series expansion of $f$ in \eqref{EqDiff} yields
\begin{equation}
a_{n+1,m}=\frac{m+1}{n+1}a_{n,m+1}
\label{coef}
\end{equation}
The equation \eqref{coef} corresponds to a recurrence relation with two indices. The coefficients must be expressed from the boundary conditions coefficients ($n=0$ or $m=0$). The resolution of this relation gives
\begin{equation}
a_{n,m}=\frac{(m+n)!}{n!m!}a_{0,m+n}
\end{equation}
So, the power series solution of the differential equation \eqref{EqDiff} is
\begin{equation}
f(x,y)=\sum_{n=0}^\infty\sum_{m=0}^\infty {m+n\choose m} a_{0,m+n}x^ny^m
\label{Eqint}
\end{equation}
The solution \eqref{Eqint} does not look like to the one formerly obtained and some modifications have to be achieved. By setting $k=m+n$,
\begin{equation}
f(x,y)=\sum_{k=0}^\infty\sum_{m=0}^k {k\choose m} a_{0,k}x^{k-m}y^m
\end{equation}
The binomial theorem simplifies the prior expression:
\begin{equation}
f(x,y)=\sum_{k=0}^\infty a_{0,k}(x+y)^k
\end{equation}
The solutions are still the functions such that $f(x,y)=\phi(x+y)$ where $\phi$ derived from the boundary conditions. The coefficients $a_{0,k}$ correspond to the power series coefficients of $y\mapsto f(0,y)$:
\begin{equation}
f(0,y)=\sum_{m=0}^\infty a_{0,m}y^m
\end{equation}
The prescribed boundary conditions must be given by a power series so that the solution can also be of this form. This constraint constitutes a prerequisite for multidimensional problems. Then, the domain of convergence of the series solution depends on the boundary conditions.
First, an exponential boundary condition is assumed: $f(0,y)=e^y$. So, necessarily $a_{0,m}=\frac{1}{m!}$. The power series solution is then 
\begin{equation}
f_1(x,y)=\sum_{k=0}^\infty \frac{1}{k!}(x+y)^k=e^{x+y}
\label{sol1}
\end{equation}
The domain of convergence of the power series is $\mathbb{R}^2$. Therefore, \eqref{sol1} is the global solution.\newline
Now, $f(0,y)=\frac{1}{y+1}$ is imposed. In such a case, $a_{0,m}=(-1)^m$. The power series solution is
\begin{equation}
f_2(x,y)=\sum_{k=0}^\infty (-1)^k (x+y)^k
\end{equation}
The domain of convergence is $D=\{(x,y): |x+y|< 1\}$ in spite of the solution being clearly $(x,y)\mapsto \frac{1}{x+y+1}$ on $\mathbb{R^+}\times\mathbb{R^+}$.\newline\newline
In short, the power series method only provides power series solutions to differential equations. For multidimensional problems, the power series method requires boundary conditions of the form of power series. This restriction is not very constraining regarding the Navier-Stokes equations because the fluid boundary conditions often satisfied this point.
Furthermore, the domain of convergence of the series solutions to multidimensional equations depends on the boundary conditions. 

\section{Application to the incompressible Navier-stokes equations}
This section concerns the application of the power series method to the incompressible Navier-Stokes equations:
\begin{equation}
\left\lbrace
\begin{array}{lc}
\nabla . \mathbf{U} = 0\\
\rho\left(\frac{\partial\mathbf{U}}{\partial t}+\mathbf{U}.\nabla\mathbf{U}\right)=-\nabla P + \mu \nabla^2 \mathbf{U}+\rho\mathbf{g}
\end{array}\right.
\label{NS}
\end{equation}
$\mathbf{U}$ is the flow velocity, $\rho$ is the constant flow density, $P$ is the pressure, $\mu$ is the constant dynamic viscosity and $g$ is the sum of body forces. In what follows, the notations $U^1$, $U^0$ and $U^2$ designate the $x$, $y$ and $z$ components of the velocity respectively.

\subsection{Mathematical approach}
Each variable in the incompressible Navier-Stokes equations are expressed as four-variable power series. For instance, the x-velocity component becomes
\begin{equation}
U^1=\sum_{\omega,p,q,r=0}^\infty u^1_{\omega,p,q,r}t^\omega x^py^qz^r
\end{equation}
with the convention $\sum\limits_{\omega,p,q,r=0}^\infty=\sum\limits_{\omega=0}^\infty\sum\limits_{p=0}^\infty\sum\limits_{q=0}^\infty\sum\limits_{r=0}^\infty$.
It applies to $U^0$, $U^2$ and $P$ alike.
The domain of convergence of the x-velocity component corresponds to 
\begin{equation}
D_{U^1}=\{(t,x,y,z)\in \mathbb{R}^4:\sum\limits_{\omega,p,q,r=0}^\infty u^1_{\omega,p,q,r}t^\omega x^py^qz^r < \infty\}
\end{equation}
$D_{U^0}$, $D_{U^2}$ and $D_P$ are similarly defined. The domains of convergence of the power series are assumed to be non-empty. \newline
The resolution requires the body accelerations to be of the form
\begin{equation}
g^i=\sum_{\omega,p,q,r=0}^\infty g^i_{\omega,p,q,r}t^\omega x^py^qz^r
\end{equation}
with $i=0,1\text{ or }2$ corresponding to the directions (such as the velocity). This assumption is not constraining because it is generally true. For instance, gravity is often considered constant. \newline
The initial conditions allow the computation of the coefficients when $\omega=0$. When at least one of $p$, $q$ and $r$ is equal to $0$ or $1$, the velocity coefficients  are computed by the six velocity boundary conditions. As regards pressure, only the terms corresponding to $p=0$ or $q=0$ or $r=0$ are derived from the boundary conditions.

\subsection{Analytical solution}
This part itemizes the calculation steps leading to the analytical solution of the Navier-Stokes equations. 
\paragraph{\textbf{Step 1: Establishment of the equations}}
~~\\
The introduction of the power series in the continuity equation yields
\begin{equation}
\sum_{\omega,p,q,r=0}^\infty \left[(p+1)u^1_{\omega,p+1,q,r}+(q+1)u^0_{\omega,p,q+1,r}+(r+1)u^2_{\omega,p,q,r+1}\right] t^{\omega}x^py^qz^r=0
\label{Div}
\end{equation}
The coefficients of the power series in the equation \eqref{Div} are necessarily nil so:
\begin{equation}
u^0_{\omega,p,q,r}=-\frac{p+1}{q}u^1_{\omega,p+1,q-1,r}-\frac{r+1}{q}u^2_{\omega,p,q-1,r+1}~~~~~~\forall (\omega,p,r)\in\mathbb{N}^3,\forall~q>0
\label{DisDiv}
\end{equation}
The boundary conditions provide the terms corresponding to $q=0$. 
\newline
In the three momentum equations, the non linear terms lead to products of series calculated by the Cauchy product:
\begin{equation}
\left(\sum_{i=0}^\infty a_{i}x^i\right)\left(\sum_{j=0}^\infty b_{j}x^j\right)=\sum_{k=0}^\infty\left( \sum_{l=0}^k a_{l}b_{k-l}\right)x^k
\end{equation}
Only one momentum equation is written since the two others are similar. For instance, the x-momentum equation becomes
\footnotesize
\begin{multline}
\frac{1}{p+1}\bigg\{(\omega+1)u^1_{\omega+1,p,q+1,r+1}+\sum\limits_{k=0}^{\omega}\sum\limits_{i=0}^{p}\sum\limits_{j=0}^{q+1}\sum\limits_{l=0}^{r+1}\left[(i+1)u^1_{\omega-k,p-i,q+1-j,r+1-l}u^1_{k,i+1,j,l}+\right.\\
\left.(j+1)u^0_{\omega-k,p-i,q+1-j,r+1-l}u^1_{k,i,j+1,l}+(l+1)u^2_{\omega-k,p-i,q+1-j,r+1-l}u^1_{k,i,j,l+1}\right]-\nu\left[(p+1)(p+2)u^1_{\omega,p+2,q+1,r+1}+\right.\\ \left.
(q+2)(q+3)u^1_{\omega,p,q+3,r+1}+(r+2)(r+3)u^1_{\omega,p,q+1,r+3}\right]-g^1_{\omega,p,q+1,r+1}\bigg\}=-\frac{1}{\rho}P_{\omega,p+1,q+1,r+1}
\label{pressure}
\end{multline}
\normalsize
with $\omega\geq 0$, $p\geq 0$, $q\geq -1$ and $r\geq -1$.  \newline
The pressure coefficients in the equation \eqref{pressure} can be substituted thanks to the momentum equation along the y axis. The pressure term in the z-momentum equation is treated alike. This provides two large equations \eqref{momentumu} and \eqref{momentumw} combining $u^1$, $u^0$ and $u^2$ coefficients. 
\footnotesize
\begin{multline}
u^1_{\omega+1,p,q,r}=-\frac{p+1}{q(q-1)}\left[(p+2)u^1_{\omega+1,p+2,q-2,r}+(r+1)u^2_{\omega+1,p+1,q-2,r+1}\right]+\frac{\nu}{\omega+1}\left[\frac{(p+2)!}{p!}u^1_{\omega,p+2,q,r}+\frac{(q+2)!}{q!}\right. \\
\left.u^1_{\omega,p,q+2,r}+\frac{(r+2)!}{r!}u^1_{\omega,p,q,r+2}-\frac{(p+3)!}{q~p!}u^0_{\omega,p+3,q-1,r}-(p+1)(q+1)u^0_{\omega,p+1,q+1,r}-\frac{(p+1)(r+2)!}{q~r!}u^0_{\omega,p+1,q-1,r+2}\right]\\
-\frac{1}{\omega+1}\sum_{k=0}^{\omega}\bigg\{\sum_{i=0}^{p}\sum_{j=0}^{q}\sum_{l=0}^{r}\left[(i+1)u^1_{\omega-k,p-i,q-j,r-l}u^1_{k,i+1,j,l}+(j+1)u^0_{\omega-k,p-i,q-j,r-l}u^1_{k,i,j+1,l}+\right.\\
\left.(l+1)u^2_{\omega-k,p-i,q-j,r-l}u^1_{k,i,j,l+1}\right]-
\frac{p+1}{q} \sum_{i=0}^{p+1}\sum_{j=0}^{q-1}\sum_{l=0}^{r}\left[(i+1)u^1_{\omega-k,p+1-i,q-1-j,r-l}u^0_{k,i+1,j,l}+(j+1)\right.\\
\left. u^0_{\omega-k,p+1-i,q-1-j,r-l}u^0_{k,i,j+1,l}+(l+1)u^2_{\omega-k,p+1-i,q-1-j,r-l}u^0_{k,i,j,l+1}\right]\bigg\} 
+\frac{1}{\omega+1}\left(g^1_{\omega,p,q,r}-\frac{p+1}{q}g^0_{\omega,p+1,q-1,r}\right)
\label{momentumu}
\end{multline}

\begin{multline}
u^2_{\omega+1,p,q,r}=-\frac{r+1}{q(q-1)}\left[(r+2)u^2_{\omega+1,p,q-2,r+2}+(p+1)u^1_{\omega+1,p+1,q-2,r+1}\right]+\frac{\nu}{\omega+1}\left[\frac{(r+2)!}{r!}u^2_{\omega,p,q,r+2}+\frac{(q+2)!}{q!}\right. \\
\left.u^2_{\omega,p,q+2,r}+\frac{(p+2)!}{p!}u^2_{\omega,p+2,q,r}-\frac{(r+3)!}{q~r!}u^0_{\omega,p,q-1,r+3}-(r+1)(q+1)u^0_{\omega,p,q+1,r+1}-\frac{(r+1)(p+2)!}{q~p!}u^0_{\omega,p+2,q-1,r+1}\right]\\
-\frac{1}{\omega+1}\sum_{k=0}^{\omega}\bigg\{\sum_{i=0}^{p}\sum_{j=0}^{q}\sum_{l=0}^{r}\left[(i+1)u^1_{\omega-k,p-i,q-j,r-l}u^2_{k,i+1,j,l}+(j+1)u^0_{\omega-k,p-i,q-j,r-l}u^2_{k,i,j+1,l}+\right.\\
\left.(l+1)u^2_{\omega-k,p-i,q-j,r-l}u^2_{k,i,j,l+1}\right]-
\frac{r+1}{q} \sum_{i=0}^{p}\sum_{j=0}^{q-1}\sum_{l=0}^{r+1}\left[(i+1)u^1_{\omega-k,p-i,q-1-j,r+1-l}u^0_{k,i+1,j,l}+(j+1) \right.\\
\left.u^0_{\omega-k,p-i,q-1-j,r+1-l}u^0_{k,i,j+1,l}+(l+1)u^2_{\omega-k,p-i,q-1-j,r+1-l}u^0_{k,i,j,l+1}\right]\bigg\} 
+\frac{1}{\omega+1}\left(g^2_{\omega,p,q,r}-\frac{r+1}{q}g^0_{\omega,p,q-1,r+1}\right)
\label{momentumw}
\end{multline}
\normalsize
The equation \eqref{DisDiv} constitutes the third relation between the velocity coefficients. These three recurrence relations enable the calculation of the velocity. Then, the pressure coefficients are calculated from the velocities coefficients by a momentum equation such as \eqref{pressure}.\newline
Both prior equations \eqref{momentumu} and \eqref{momentumw} can be combined into one 
\footnotesize
\begin{multline}
u^\alpha_{\omega,p,q,r}=-\frac{X_\alpha+1}{q(q-1)}\left[(X_\alpha+2)u^\alpha_{\omega,p+2(-1)^{\alpha+1}+2(\alpha-1),q-2,r+2(-1)^{\alpha}+2(2-\alpha)}+(X_{3-\alpha}+1)\right.\\
\left.u^{3-\alpha}_{\omega,p+(-1)^{\alpha+1}+2(\alpha-1),q-2,r+(-1)^{\alpha}+2(2-\alpha)}\right]+\frac{\nu}{\omega}\left[\frac{(X_\alpha+2)!}{X_\alpha!}u^\alpha_{\omega-1,p+2(-1)^{\alpha+1}+2(\alpha-1),q,r+2(-1)^{\alpha}+2(2-\alpha)}+\frac{(q+2)!}{q!} \right. \\
\left.u^\alpha_{\omega-1,p,q+2,r}+\frac{(X_{3-\alpha}+2)!}{X_{3-\alpha}!}u^\alpha_{\omega-1,p+2(\alpha-1),q,r+2(2-\alpha)} -\frac{(X_\alpha+3)!}{q~X_\alpha!}u^0_{\omega-1,p+3(2-\alpha),q-1,r+3(\alpha-1)}-(X_\alpha+1)(q+1)\right.\\
\left.u^0_{\omega-1,p+2-\alpha,q+1,r+\alpha-1}-\frac{(X_\alpha+1)(X_{3-\alpha}+2)!}{q~X_{3-\alpha}!}u^0_{\omega-1,p+(-1)^{\alpha+1}+3\alpha-4+2(\alpha-1),q-1,r+2(-1)^\alpha-3\alpha+5+2(2-\alpha)}\right]+\\
\frac{1}{\omega}\sum_{k=0}^{\omega-1}\sum_{b=0}^{1}\sum_{\phi=0}^{2}\sum_{i=0}^{p+(1-b)(2-\alpha)}\sum_{j=0}^{q+b-1}\sum_{l=0}^{r+(1-b)(\alpha-1)}\bigg[\frac{1}{2}(\phi-1)(\phi-2)j+\phi(2-\phi)i+\frac{1}{2}\phi(\phi-1)l+1](-1)^b\frac{(b-1+q)!}{q!}\\
\frac{(X_\alpha+1-b)!}{X_\alpha!}u^\phi_{\omega-1-k,p+(1-b)(2-\alpha)-i,q-1+b-j,r+(1-b)(\alpha-1)-l}u^{b\alpha}_{k,i+\phi(2-\phi),j+\frac{1}{2}(\phi-1)(\phi-2),l+\frac{1}{2}\phi(\phi-1)}\bigg]\\
+\frac{1}{\omega}\left(g^\alpha_{\omega-1,p,q,r}-\frac{X_\alpha+1}{q}g^0_{\omega-1,p+2-\alpha,q-1,r+\alpha-1}\right)
\label{momentuma}
\end{multline}
\normalsize
The notation $X_i$ with $X_0=q$, $X_1=p$ and $X_2=r$ is adopted. The equation \eqref{momentuma} corresponds to the equation \eqref{momentumu} when $\alpha=1$ and to the equation \eqref{momentumw} when $\alpha=2$.
The problem now consists of solving \eqref{momentuma} and \eqref{DisDiv} and expressing the coefficients $u^\alpha_{\omega,p,q,r}$ as functions of the boundary coefficients $u^\alpha_{\omega,p=0\text{ or }1,q=0\text{ or }1,r=0\text{ or }1}$ and the initial coefficients $u^\alpha_{\omega=0,p,q,r}$. 
\paragraph{\textbf{Step 2: First compaction of the equations}}
~~\\
For the sake of convenience, the equation \eqref{momentuma} need to be compacted. The following proposition constitutes the first stage of the compaction. 
\begin{prop}
$\forall \omega\in\mathbb{N}^*,~\forall (p,q,r)\in\mathbb{N}_{>1}\times\mathbb{N}_{>1}\times\mathbb{N}_{>1},~\alpha=1\text{ or }2$, the coefficients $u^\alpha_{\omega,p,q,r}$ can be expressed as
\begin{multline}
u^\alpha_{\omega,p,q,r}=F(0,\omega,p,q,r,\alpha,0)+\sum_{z=1}^8 \sum_{n=1}^{E\left(\frac{q}{2}\right)} F(z,\omega,p,q,r,\alpha,n) u^{\kappa(z,\alpha)}_{\omega-1,\xi(z,p,q,\alpha,n),\Delta(z,q),\epsilon(z,q,r,\alpha,n)}+\\
\frac{1}{\omega}\sum_{k=0}^{\omega-1}\sum_{a=0}^2\sum_{b=0}^1\sum_{\phi=0}^2\sum_{n=1}^{E\left(\frac{q}{2}\right)-a}\sum_{m=1}^n\sum_{i=0}^{s_1(p,q,\alpha,a,n,m,b)}\sum_{j=0}^{s_2(q,a,n,b)}\sum_{l=0}^{s_3(q,r,\alpha,a,n,m,b)}\widetilde{G}(p,q,r,\alpha,a,n,m,b,\phi,i,j,l)\\
u^{b(\alpha+(a-2)(2\alpha-3)a)}_{k,i+\phi(2-\phi),j+\frac{1}{2}(\phi-1)(\phi-2),l+\frac{1}{2}\phi(\phi-1)}u^{\phi}_{\omega-1-k,Y_1(p,q,\alpha,a,n,m,b)-i,Y_0(q,a,n,b)-j,Y_2(q,r,\alpha,a,n,m,b)-l}
\label{Step1}
\end{multline}
with
\small
\begin{multline}
\widetilde{G}(p,q,r,\alpha,a,n,m,b,\phi,i,j,l)=H(q,a,n,m,b)\left[\frac{1}{2}(\phi-1)(\phi-2)j+\phi(2-\phi)i+\frac{1}{2}\phi(\phi-1)l+1\right]\\
\frac{Y_0(q,a,n,b)!Y_1(p,q,\alpha,a,n,m,b)!Y_2(q,r,\alpha,a,n,m,b)!}{p!q!r!}
\label{GG}
\end{multline}
\end{prop}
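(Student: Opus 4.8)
The plan is to obtain \eqref{Step1} from the combined momentum relation \eqref{momentuma} by a finite downward recursion on the index $q$. The only feature of \eqref{momentuma} that prevents it from already expressing $u^\alpha_{\omega,p,q,r}$ through coefficients at the strictly lower time level $\omega-1$ is the leading bracket carrying the prefactor $-\frac{X_\alpha+1}{q(q-1)}$: it couples $u^\alpha_{\omega,p,q,r}$ to the two coefficients $u^\alpha_{\omega,\cdot,q-2,\cdot}$ and $u^{3-\alpha}_{\omega,\cdot,q-2,\cdot}$, which sit at the same time level $\omega$ but with the $q$-index lowered by $2$. I would therefore read \eqref{momentuma} as a rewriting rule $u^\alpha_{\omega,p,q,r}=B_\alpha+L_\alpha+N_\alpha+G_\alpha$, where $B_\alpha$ is this same-level bracket, $L_\alpha$ collects the six viscous terms at level $\omega-1$, $N_\alpha$ is the nonlinear Cauchy double sum and $G_\alpha$ the body-force contribution, and then substitute the rule back into the two coefficients hidden in $B_\alpha$.

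Each substitution lowers the $q$-index of the surviving same-level coefficients by exactly $2$ while emitting fresh copies of $L$, $N$ and $G$ at the shifted multi-index, multiplied by the accumulated $B$-prefactors and with the component label possibly swapped through the cross term $u^{3-\alpha}$. Because $q$ is a nonnegative integer and the hypothesis $q>1$ makes the factor $1/(q(q-1))$ meaningful at the first step, the recursion is well founded and stops after $E(q/2)$ steps, once the $q$-index of every remaining same-level coefficient has entered $\{0,1\}$, where it is prescribed by the boundary data. This is precisely the origin of the outer limit $E(q/2)$ carried by the $n$-sums in \eqref{Step1}, the integer $n$ being the recursion depth. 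I would run the induction on $n$, proving that the linear contributions assemble into $\sum_{z=1}^{8}\sum_{n=1}^{E(q/2)}F(z,\omega,p,q,r,\alpha,n)\,u^{\kappa(z,\alpha)}_{\omega-1,\xi,\Delta,\epsilon}$, where the eight term-types $z$ record the viscous coefficients produced along the two branches of $B_\alpha$, $\kappa$ gives the component reached, and $\xi,\Delta,\epsilon$ give the displacements of $p,q,r$; the terminal boundary coefficients together with $G_\alpha$ are absorbed into $F(0,\omega,p,q,r,\alpha,0)$.

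The nonlinear part is handled in the same induction but needs the extra summation variables $a\in\{0,1,2\}$ and $m\le n$ visible in \eqref{Step1}. The convolution bounds of $N_\alpha$ depend on $q$ (the index $j$ runs up to $q+b-1$), so each recursion step shrinks these bounds by $2$; re-indexing the shifted convolutions into a single unified sum produces the nested $\sum_{m=1}^{n}$, while the reduction of the upper limit to $E(q/2)-a$ records how many steps still fit once the bound has been displaced. The branch-tracking is encoded in the exponent $b(\alpha+(a-2)(2\alpha-3)a)$, the cumulative multi-index shift of the slow factor in $Y_0,Y_1,Y_2$, the resulting summation ranges in $s_1,s_2,s_3$, and the chained scalar prefactor in $H$, so that $\widetilde G$ of \eqref{GG} comes out as the product of $H$, the differential weight $\frac12(\phi-1)(\phi-2)j+\phi(2-\phi)i+\frac12\phi(\phi-1)l+1$ and the factorial ratio obtained when the shifted factorials are re-expressed relative to $p!\,q!\,r!$.

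The main obstacle is not conceptual but the explicit verification that the closed forms of $\xi,\Delta,\epsilon,Y_0,Y_1,Y_2,s_1,s_2,s_3,F,H$ reproduce the exact cumulative effect of $n$ nested substitutions, including the interplay between the $X_\alpha$-dependent shifts of $p$ and $r$ and the systematic exchange $\alpha\leftrightarrow 3-\alpha$ generated by the cross term of $B_\alpha$. I would discharge it by checking the base case $q\in\{2,3\}$, where $E(q/2)=1$ and a single substitution already reaches the boundary, against \eqref{momentuma} term by term, and then confirming that advancing the recursion by one step turns the depth-$n$ index functions into the depth-$(n+1)$ ones.
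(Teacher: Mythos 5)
Your proposal is correct and follows essentially the same route as the paper: the paper's proof is an induction on $q$ in steps of two, with base case $q\in\{2,3\}$, in which the recurrence \eqref{momentuma} is substituted into its own two same-level terms $u^{\alpha}_{\omega,\cdot,q-2,\cdot}$ and $u^{3-\alpha}_{\omega,\cdot,q-2,\cdot}$ --- exactly your downward recursion read as induction on the depth $n=E\left(\frac{q}{2}\right)$. Like you, the paper reduces the remaining work to formal bookkeeping identities (its relations \eqref{eq1}--\eqref{eq4}) certifying that the closed-form prefactors and index functions ($F$, $\widetilde{G}$, $\xi$, $\Delta$, $\epsilon$, $Y_i$, $s_i$, $H$) reproduce the effect of one additional substitution step.
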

\normalsize
All the functions in the equations \eqref{Step1} and \eqref{GG} are given in the appendix A. The function $E$ corresponds to the floor function.
\begin{proof}
This formula is demonstrated by induction on $q$. 
\begin{itemize}
\item Base case: $q=2$ and $3$ 
\end{itemize}
In this case, the equation \eqref{momentuma} becomes
\footnotesize
\begin{multline}
u^\alpha_{\omega,p,q=2\text{ or }3,r}=F_0(\omega,p,q,r,\alpha)+[F(7,\omega,p,q,r,\alpha,1)u^{\kappa(7,\alpha)}_{\omega-1,\xi(7,p,q,\alpha,1),\Delta(7,q),\epsilon(7,q,r,\alpha,1)}+F(2,\omega,p,q,r,\alpha,1)\\
u^{\kappa(2,\alpha)}_{\omega-1,\xi(2,p,q,\alpha,1),\Delta(2,q),\epsilon(2,q,r,\alpha,1)}+
F(1,\omega,p,q,r,\alpha,1)u^{\kappa(1,\alpha)}_{\omega-1,\xi(1,p,q,\alpha,1),\Delta(1,q),\epsilon(1,q,r,\alpha,1)}+F(5,\omega,p,q,r,\alpha,1)\\u^{\kappa(5,\alpha)}_{\omega-1,\xi(5,p,q,\alpha,1),\Delta(5,q),\epsilon(5,q,r,\alpha,1)}+F(3,\omega,p,q,r,\alpha,1)u^{\kappa(3,\alpha)}_{\omega-1,\xi(3,p,q,\alpha,1),\Delta(3,q),\epsilon(3,q,r,\alpha,1)}+F(6,\omega,p,q,r,\alpha,1)\\
u^{\kappa(6,\alpha)}_{\omega-1,\xi(6,p,q,\alpha,1),\Delta(6,q),\epsilon(6,q,r,\alpha,1)}]+ 
\frac{1}{\omega}\sum_{k=0}^{\omega-1}\sum_{b=0}^{1}\sum_{\phi=0}^{2}\sum_{i=0}^{s_1(p,q,\alpha,0,1,1,b)}\sum_{j=0}^{s_2(q,0,1,b)}\sum_{l=0}^{s_3(q,r,\alpha,0,1,1,b)}\bigg[\widetilde{G}(p,q,r,\alpha,0,1,1,b,\phi,i,j,l)\\
u^{b\alpha}_{k,i+\phi(2-\phi),j+\frac{1}{2}(\phi-1)(\phi-2),l+\frac{1}{2}\phi(\phi-1)}u^{\phi}_{\omega-1-k,Y_1(p,q,\alpha,0,1,1,b)-i,Y_0(q,0,1,b)-j,Y_2(q,r,\alpha,0,1,1,b)-l}\bigg]
+\widetilde{F_0}(\omega,p,q,r,\alpha)
\label{II}
\end{multline}
\normalsize
With the following relations
\begin{equation}
F(0,\omega,p,q,r,\alpha,0)=F_0(\omega,p,q,r,\alpha)+\widetilde{F_0}(\omega,p,q,r,\alpha)
\end{equation}
\begin{equation}
F(4,\omega,p,q<4,r,\alpha,1)=F(8,\omega,p,q<4,r,\alpha,1)=0
\end{equation}
the equation \eqref{II} becomes
\begin{multline}
u^\alpha_{\omega,p,q=2\text{ or }3,r}=F(0,\omega,p,q,r,\alpha,0)+\sum_{z=1}^8 F(z,\omega,p,q,r,\alpha,n) u^{\kappa(z,\alpha)}_{\omega-1,\xi(z,p,q,\alpha,n),\Delta(z,q),\epsilon(z,q,r,\alpha,n)}+ \\
\frac{1}{\omega}\sum_{k=0}^{\omega-1}\sum_{b=0}^{1}\sum_{\phi=0}^{2}\sum_{i=0}^{s_1(p,q,\alpha,0,1,1,b)}\sum_{j=0}^{s_2(q,0,1,b)}\sum_{l=0}^{s_3(q,r,\alpha,0,1,1,b)}\bigg[\widetilde{G}(p,q,r,\alpha,0,1,1,b,\phi,i,j,l)\\
u^{b\alpha}_{k,i+\phi(2-\phi),j+\frac{1}{2}(\phi-1)(\phi-2),l+\frac{1}{2}\phi(\phi-1)}u^{\phi}_{\omega-1-k,Y_1(p,q,\alpha,0,1,1,b)-i,Y_0(q,0,1,b)-j,Y_2(q,r,\alpha,0,1,1,b)-l}\bigg]
\label{proofq2}
\end{multline}
The equation \eqref{proofq2} corresponds to \eqref{Step1} for $q=2$ or $3$.
\begin{itemize}
\item Induction step 
\end{itemize}
This step assumes that the formula \eqref{Step1} holds for any $q-2$. Considering the base case, it is sufficient to prove \eqref{Step1} for $q$. In the equation \eqref{momentuma2}, the two bold terms are replaced thanks to the induction hypothesis.
\footnotesize
\begin{multline}
u^\alpha_{\omega,p,q,r}=-\frac{X_\alpha+1}{q(q-1)}\left[(X_\alpha+2)\boldsymbol{u^\alpha_{\omega,p+2(-1)^{\alpha+1}+2(\alpha-1),q-2,r+2(-1)^{\alpha}+2(2-\alpha)}}+(X_{3-\alpha}+1)\right.\\
\left.\boldsymbol{u^{3-\alpha}_{\omega,p+(-1)^{\alpha+1}+2(\alpha-1),q-2,r+(-1)^{\alpha}+2(2-\alpha)}}\right]+\frac{\nu}{\omega}\left[\frac{(X_\alpha+2)!}{X_\alpha!}u^\alpha_{\omega-1,p+2(-1)^{\alpha+1}+2(\alpha-1),q,r+2(-1)^{\alpha}+2(2-\alpha)}+\right. \\
\left.\frac{(q+2)!}{q!}u^\alpha_{\omega-1,p,q+2,r}+\frac{(X_{3-\alpha}+2)!}{X_{3-\alpha}!}u^\alpha_{\omega-1,p+2(\alpha-1),q,r+2(2-\alpha)}-\frac{(X_\alpha+3)!}{q~X_\alpha!}u^0_{\omega-1,p+3(2-\alpha),q-1,r+3(\alpha-1)} -(X_\alpha+1)\right.\\
\left.(q+1)u^0_{\omega-1,p+2-\alpha,q+1,r+\alpha-1}-\frac{(X_\alpha+1)(X_{3-\alpha}+2)!}{q~X_{3-\alpha}!}u^0_{\omega-1,p+(-1)^{\alpha+1}+3\alpha-4+2(\alpha-1),q-1,r+2(-1)^\alpha-3\alpha+5+2(2-\alpha)}\right]
+\\
\frac{1}{\omega}\sum_{k=0}^{\omega-1}\sum_{b=0}^{1}\sum_{\phi=0}^{2}\sum_{i=0}^{p+(1-b)(2-\alpha)}\sum_{j=0}^{q+b-1}\sum_{l=0}^{r+(1-b)(\alpha-1)}\bigg\{\bigg[\frac{1}{2}(\phi-1)(\phi-2)j+\phi(2-\phi)i+\frac{1}{2}\phi(\phi-1)l+\bigg](-1)^b\frac{(b-1+q)!}{q!}\\
\frac{(X_\alpha+1-b)!}{X_\alpha!}u^\phi_{\omega-1-k,p+(1-b)(2-\alpha)-i,q-1+b-j,r+(1-b)(\alpha-1)-l}u^{b\alpha}_{k,i+\phi(2-\phi),j+\frac{1}{2}(\phi-1)(\phi-2),l+\frac{1}{2}\phi(\phi-1)}\bigg\}
+\\
\frac{1}{\omega}\left(g^\alpha_{\omega-1,p,q,r}-\frac{X_\alpha+1}{q}g^0_{\omega-1,p+2-\alpha,q-1,r+\alpha-1}\right)
\label{momentuma2}
\end{multline}
\normalsize
Let $\mathcal{X}$ be the following function
\footnotesize
\begin{multline}
\mathcal{X}(\omega,p,q,r,\alpha)=\sum_{k=0}^{\omega-1}\sum_{a=0}^2\sum_{b=0}^1\sum_{\phi=0}^2\sum_{n=1}^{E\left(\frac{q}{2}\right)-a}\sum_{m=1}^n\sum_{i=0}^{s_1(p,q,\alpha,a,n,m,b)}\sum_{j=0}^{s_2(q,a,n,b)}\sum_{l=0}^{s_3(q,r,\alpha,a,n,m,b)}\widetilde{G}(p,q,r,\alpha,a,n,m,b,\phi,i,j,l)\\
u^{b(\alpha+(a-2)(2\alpha-3)a)}_{k,i+\phi(2-\phi),j+\frac{1}{2}(\phi-1)(\phi-2),l+\frac{1}{2}\phi(\phi-1)}u^{\phi}_{\omega-1-k,Y_1(p,q,\alpha,a,n,m,b)-i,Y_0(q,a,n,b)-j,Y_2(q,r,\alpha,a,n,m,b)-l}
\end{multline}
\normalsize
Then, the two bold terms in the equation \eqref{momentuma2} are
\footnotesize
\begin{multline}
u^\alpha_{\omega,p+2(-1)^{\alpha+1}+2(\alpha-1),q-2,r+2(-1)^{\alpha}+2(2-\alpha)}=F(0,\omega,p+2(-1)^{\alpha+1}+2(\alpha-1),q-2,r+2(-1)^{\alpha}+2(2-\alpha),\alpha,0)\\
+\sum_{z=1}^8 \sum_{n=1}^{E\left(\frac{q}{2}\right)-1} \bigg\{F(z,\omega,p+2(-1)^{\alpha+1}+2(\alpha-1),q-2,r+2(-1)^{\alpha}+2(2-\alpha),\alpha,n)\\
u^{\kappa(z,\alpha)}_{\omega-1,\xi(z,p+2(-1)^{\alpha+1}+2(\alpha-1),q-2,\alpha,n),\Delta(z,q-2),\epsilon(z,q-2,r+2(-1)^{\alpha}+2(2-\alpha),\alpha,n)}\bigg\}+\\
\frac{1}{\omega}\mathcal{X}(\omega,p+2(-1)^{\alpha+1}+2(\alpha-1),q-2,r+2(-1)^{\alpha}+2(2-\alpha),\alpha)
\label{Form1}
\end{multline}
\normalsize
and
\footnotesize
\begin{multline}
u^{3-\alpha}_{\omega,p+(-1)^{\alpha+1}+2(\alpha-1),q-2,r+(-1)^{\alpha}+2(2-\alpha)}= F(0,\omega,p+(-1)^{\alpha+1}+2(\alpha-1),q-2,r+(-1)^{\alpha}+2(2-\alpha),3-\alpha,0)\\
+\sum_{z=1}^8 \sum_{n=1}^{E\left(\frac{q}{2}\right)-1}\bigg\{ F(z,\omega,p+(-1)^{\alpha+1}+2(\alpha-1),q-2,r+(-1)^{\alpha}+2(2-\alpha),3-\alpha,n) \\
u^{\kappa(z,3-\alpha)}_{\omega-1,\xi(z,p+(-1)^{\alpha+1}+2(\alpha-1),q-2,\alpha,n),\Delta(z,q-2),\epsilon(z,q-2,r+(-1)^{\alpha}+2(2-\alpha),3-\alpha,n)}\bigg\}+\\
\frac{1}{\omega}\mathcal{X}(\omega,p+(-1)^{\alpha+1}+2(\alpha-1),q-2,r+(-1)^{\alpha}+2(2-\alpha),3-\alpha)
\label{Form2}
\end{multline}
\normalsize
Formal calculations demonstrate the four following relations.
\begin{multline}
F_0(\omega,p,q,r,\alpha)=-\frac{X_{\alpha}+1}{q(q-1)}\bigg[(X_{\alpha}+2)F_0(\omega,p+2(-1)^{\alpha+1}+2(\alpha-1),q-2,r+2(-1)^{\alpha}+\\2(2-\alpha),\alpha)+(X_{3-\alpha}+1)F_0(\omega,p+(-1)^{\alpha+1}+2(\alpha-1),q-2,r+(-1)^{\alpha}+2(2-\alpha),3-\alpha)\bigg]
\label{eq1}
\end{multline}
\begin{multline}
\widetilde{F_0}(\omega,p,q,r,\alpha)=-\frac{X_{\alpha}+1}{q(q-1)}\bigg[(X_{\alpha}+2)\widetilde{F_0}(\omega,p+2(-1)^{\alpha+1}+2(\alpha-1),q-2,r+2(-1)^{\alpha}+\\2(2-\alpha),\alpha)+(X_{3-\alpha}+1)\widetilde{F_0}(\omega,p+(-1)^{\alpha+1}+2(\alpha-1),q-2,r+(-1)^{\alpha}+2(2-\alpha),3-\alpha)\bigg]
\label{eq2}
\end{multline}
\footnotesize
\begin{multline}
\sum_{z=1}^8 \sum_{n=1}^{E\left(\frac{q}{2}\right)} F(z,\omega,p,q,r,\alpha,n) u^{\kappa(z,\alpha)}_{\omega-1,\xi(z,p,q,\alpha,n),\Delta(z,q),\epsilon(z,q,r,\alpha,n)}=-\frac{X_\alpha+1}{q(q-1)}\sum_{z=1}^8 \sum_{n=1}^{E\left(\frac{q}{2}\right)-1} \\
\bigg[(X_\alpha+2) F(z,\omega,p+2(-1)^{\alpha+1}+2(\alpha-1),q-2,r+2(-1)^\alpha+2(2-\alpha),\alpha,n) \\
u^{\kappa(z,\alpha)}_{\omega-1,\xi(z,p+2(-1)^{\alpha+1}+2(\alpha-1),q-2,\alpha,n),\Delta(z,q-2),\epsilon(z,q-2,r+2(-1)^\alpha+2(2-\alpha),\alpha,n)}+\\
(X_{3-\alpha}+1)F(z,\omega,p+(-1)^{\alpha+1}+2(\alpha-1),q-2,r+(-1)^\alpha+2(2-\alpha),3-\alpha,n)\\
 u^{\kappa(z,3-\alpha)}_{\omega-1,\xi(z,p+(-1)^{\alpha+1}+2(\alpha-1),q-2,3-\alpha,n),\Delta(z,q-2),\epsilon(z,q-2,r+(-1)^\alpha+2(2-\alpha),3-\alpha,n)}\bigg]+\frac{\nu}{\omega}\bigg[\frac{(X_\alpha+2)!}{X_\alpha!}\\
u^\alpha_{\omega-1,p+2(-1)^{\alpha+1}+2(\alpha-1),q,r+2(-1)^{\alpha}+2(2-\alpha)}+\frac{(q+2)!}{q!}u^\alpha_{\omega-1,p,q+2,r}+\frac{(X_{3-\alpha}+2)!}{X_{3-\alpha}!}u^\alpha_{\omega-1,p+2(\alpha-1),q,r+2(2-\alpha)}-\\
\frac{(X_\alpha+3)!}{q~X_\alpha!}u^0_{\omega-1,p+3(2-\alpha),q-1,r+3(\alpha-1)} -(X_\alpha+1)(q+1)u^0_{\omega-1,p+2-\alpha,q+1,r+\alpha-1}-\\
\frac{(X_\alpha+1)(X_{3-\alpha}+2)!}{q~X_{3-\alpha}!}u^0_{\omega-1,p+(-1)^{\alpha+1}+3\alpha-4+2(\alpha-1),q-1,r+2(-1)^\alpha-3\alpha+5+2(2-\alpha)}\bigg]
\label{eq3}
\end{multline}
\begin{multline}
\mathcal{X}(\omega,p,q,r,\alpha)=-\frac{X_\alpha+1}{q(q-1)}\bigg[(X_\alpha+2)\mathcal{X}(\omega,p+2(-1)^{\alpha+1}+2(\alpha-1),q-2,r+2(-1)^{\alpha}+2(2-\alpha),\alpha)+\\
(X_{3-\alpha}+1)\mathcal{X}(\omega,p+(-1)^{\alpha+1}+2(\alpha-1),q-2,r+(-1)^{\alpha}+2(2-\alpha),3-\alpha)\bigg]+\frac{1}{\omega}\sum_{k=0}^{\omega-1}\sum_{b=0}^{1}\sum_{\phi=0}^{2}\sum_{i=0}^{p+(1-b)(2-\alpha)}\\
\sum_{j=0}^{q+b-1}\sum_{l=0}^{r+(1-b)(\alpha-1)}[\frac{1}{2}(\phi-1)(\phi-2)j+\phi(2-\phi)i+\frac{1}{2}\phi(\phi-1)l+1](-1)^b\frac{(b-1+q)!}{q!}\frac{(X_\alpha+1-b)!}{X_\alpha!}\\
u^\phi_{\omega-1-k,p+(1-b)(2-\alpha)-i,q-1+b-j,r+(1-b)(\alpha-1)-l}u^{b\alpha}_{k,i+\phi(2-\phi),j+\frac{1}{2}(\phi-1)(\phi-2),l+\frac{1}{2}\phi(\phi-1)}
\label{eq4}
\end{multline}
\normalsize
The substitution of the equations \eqref{Form1} and \eqref{Form2} in the equation \eqref{momentuma2} and then the substitution of the equations from \eqref{eq1} to \eqref{eq4} in the resulting relation leads to the relation \eqref{Step1} for $q$. Then, the formula holds $\forall~q>1$ which completes the proof.
\end{proof}
\paragraph{\textbf{Step 3: Second compaction of the equations}}
~~\\
The formulation of the problem is still too complex and a second compaction is needed. Note that a convention will be used in the remaining: when at least one of $\omega$, $p$, $q$ and $r$ is negative, then $u^\alpha_{\omega,p,q,r}=1$.
\begin{prop}
The equations \eqref{Step1} and \eqref{DisDiv} are equivalent to \eqref{step3}
\begin{multline}
u^\alpha_{\omega,p,q,r}=\sum_{i=0}^{\mathcal{F}(p,q,r,\alpha,\omega)}\Gamma(\omega,p,q,r,\alpha,i)
u^{\beta(\omega,p,q,r,\alpha,i)}_{\mathcal{E}(\omega,p,q,r,\alpha,i),T_1(\omega,p,q,r,\alpha,i),T_0(\omega,p,q,r,\alpha,i),T_2(\omega,p,q,r,\alpha,i)}\\
u^{\Phi(\omega,p,q,r,\alpha,i)}_{\Upsilon(\omega,p,q,r,\alpha,i),Z_1(\omega,p,q,r,\alpha,i),Z_0(\omega,p,q,r,\alpha,i),Z_2(\omega,p,q,r,\alpha,i)}
\label{step3}
\end{multline}
\end{prop}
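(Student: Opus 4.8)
The plan is to recognise that \eqref{step3} is not a new recurrence but a uniform repackaging of the three structurally distinct groups of terms already present in \eqref{Step1}, augmented by the continuity relation \eqref{DisDiv}, the decisive tool being the stated convention that any coefficient carrying a negative index equals $1$. First I would partition the right-hand side of \eqref{Step1} into its source part $F(0,\omega,p,q,r,\alpha,0)$, its linear part $\sum_{z=1}^{8}\sum_{n=1}^{E(q/2)}F(z,\dots)u^{\kappa(z,\alpha)}_{\dots}$, and its genuinely quadratic part $\frac1\omega\sum_{k,a,b,\phi,n,m,i,j,l}\widetilde G(\dots)\,u\,u$. The target form requires every one of these to appear as a single product $\Gamma\,u^{\beta}_{\dots}u^{\Phi}_{\dots}$, so the entire content of the statement is to force all three groups into that common shape and then concatenate them under one scalar summation index.

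The quadratic part transfers verbatim: each summand is already a product of two coefficients, so on the block of the master index corresponding to it I would set $\Gamma=\frac1\omega\widetilde G$ and let $(\beta,\mathcal E,T_1,T_0,T_2)$ and $(\Phi,\Upsilon,Z_1,Z_0,Z_2)$ reproduce exactly the two multi-indices $u^{b(\alpha+(a-2)(2\alpha-3)a)}_{k,\dots}$ and $u^{\phi}_{\omega-1-k,\dots}$ of \eqref{Step1}. The linear part is turned quadratic by writing $F(z,\dots)u^{\kappa(z,\alpha)}_{\dots}=F(z,\dots)\,u^{\kappa(z,\alpha)}_{\dots}\,u^{\bullet}_{\bullet}$ where the adjoined factor is given a deliberately negative index; by the convention it equals $1$, so the value is unchanged, and on this block I would take $\Gamma=F(z,\dots)$, let one factor carry the true index of $u^{\kappa(z,\alpha)}_{\dots}$ and push the other into the negative regime. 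The source term $F(0,\dots)$ is handled the same way as $F(0,\dots)\cdot 1\cdot 1$, both factors forced negative, occupying one value of the index. The continuity relation enters as the case $\alpha=0$: the two terms $-\frac{p+1}{q}u^1_{\omega,p+1,q-1,r}$ and $-\frac{r+1}{q}u^2_{\omega,p,q-1,r+1}$ of \eqref{DisDiv} are likewise written as products with a negative-index dummy factor and placed on two values of the index, which shows that \eqref{step3} with $\alpha=0$ reproduces \eqref{DisDiv}.

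The combinatorial heart of the proof is then the construction of the single scalar index of \eqref{step3} enumerating the disjoint union of these blocks, together with closed forms for the total count $\mathcal F(p,q,r,\alpha,\omega)$ and for the twelve auxiliary functions $\Gamma,\beta,\Phi,\mathcal E,\Upsilon,T_0,T_1,T_2,Z_0,Z_1,Z_2$ as explicit piecewise functions of that index. Concretely I would fix a lexicographic order on the nested tuple $(k,a,b,\phi,n,m,i,j,l)$ of the quadratic part and on $(z,n)$ of the linear part, write the flattening bijection and its inverse by repeated Euclidean division against the successive range sizes $s_1,s_2,s_3,\dots$, and read off each auxiliary function by composing that inverse with the corresponding entry of \eqref{Step1}. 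Since the rewriting changes no value term by term and is reversible, regrouping the terms of \eqref{step3} by the block they belong to returns \eqref{Step1} for $\alpha\in\{1,2\}$ and \eqref{DisDiv} for $\alpha=0$, giving equivalence in both directions.

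The hard part will be exactly this last explicit indexing rather than any analytic estimate: one must produce simultaneously valid closed forms for all twelve functions out of one integer, and, above all, check every degenerate range so that the flattening stays a bijection — in particular the cases where $E(q/2)-a$ makes an inner sum empty and the low-$q$ cases where some $F(z,\dots)$ vanish, already recorded as $F(4,\dots)=F(8,\dots)=0$ in the base case of the preceding proposition. Once the negative-index convention is adopted the analytic content is immediate; the entire difficulty is bookkeeping, ensuring that no term is counted twice and none is lost as the summation bounds collapse.
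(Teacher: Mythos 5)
Your proposal coincides with the paper's own proof in both strategy and mechanics: the paper likewise splits \eqref{Step1} into source, linear and quadratic parts, absorbs the source and linear terms as products with dummy factors whose negative indices force the value $1$ by the stated convention, flattens the nested sums (over $(z,n)$ and over $(k,a,b,\phi,n,m,i,j,l)$) into a single master index by repeated Euclidean division and modulo arithmetic (the functions $D$, $\mathcal{I}$, $\mathcal{T}$, $L_0$, $L_1$), and incorporates \eqref{DisDiv} as the case $\alpha=0$ by concatenating the $\alpha=1$ and $\alpha=2$ blocks weighted by $-\frac{p+1}{q}$ and $-\frac{r+1}{q}$. The only cosmetic deviation is that for $\alpha=0$ the paper defines the auxiliary functions recursively, so that the sum is the fully expanded concatenation of the two velocity expansions rather than your two-term representation with dummy factors; this does not alter the substance of the argument.
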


\begin{proof}
This proposition corresponds to the merger of multiple sums into one. 
The first double sums in the equation \eqref{Step1} can be replaced by one sum 
\begin{equation}
\sum_{z=1}^8 \sum_{n=1}^{E\left(\frac{q}{2}\right)} \Leftrightarrow \sum_{\chi=1}^{8E\left(\frac{q}{2}\right)}
\end{equation}
with
\begin{equation}
n=E\left(\frac{\chi-1}{8}\right)+1
\end{equation}
\begin{equation}
z=\chi-8E_+\left(\frac{\chi-1}{8}\right)
\end{equation}
$E_+$ is the floor function which is equal to 0 when the argument is negative. Moreover, the first term in the equation \eqref{Step1} can integrate the previous sum:
\begin{multline}
F(0,\omega,p,q,r,\alpha,0)+\sum_{\chi=1}^{8E\left(\frac{q}{2}\right)} F(z,\omega,p,q,r,\alpha,n) u^{\kappa(z,\alpha)}_{\omega-1,\xi(z,p,q,\alpha,n),\Delta(z,q),\epsilon(z,q,r,\alpha,n)} =\\ \sum_{\chi=0}^{8E\left(\frac{q}{2}\right)} F(z,\omega,p,q,r,\alpha,n) u^{\kappa(z,\alpha)}_{\omega-1,\xi(z,p,q,\alpha,n),\Delta(z,q),\epsilon(z,q,r,\alpha,n)}
\end{multline}
Only one term is added in the sum, $\chi=0$. In this case, $z=0$ and the functions $\xi$, $\Delta$ and $\epsilon$ are equal to $-1$ by definition. Then, the coefficient $u$ in the sum is equal to $1$ by convention. As expected, the first term of the sum corresponds to $F(0,\omega,p,q,r,\alpha,0)$. \newline
Regarding the second part of the equation \eqref{Step1}, the multiple sums are divided into two parts. First,
\begin{equation}
\sum_{i=0}^{s_1(p,q,\alpha,a,n,m,b)}\sum_{j=0}^{s_2(q,a,n,b)}\sum_{l=0}^{s_3(q,r,\alpha,a,n,m,b)} \Leftrightarrow \sum_{k=1}^{(s_1(p,q,\alpha,a,n,m,b)+1)(s_2(q,a,n,b)+1)(s_3(q,r,\alpha,a,n,m,b)+1)}
\end{equation}
with 
\begin{equation}
\left\lbrace
\begin{array}{lc}
i=E\left(\frac{k-1}{(s_3(q,r,\alpha,a,n,m,b)+1)(s_2(q,a,n,b)+1)}\right) \\
j=E\left(\frac{k-1}{(s_3(q,r,\alpha,a,n,m,b)+1)}\right)-(s_2(q,a,n,b)+1)E\left(\frac{k-1}{(s_3(q,r,\alpha,a,n,m,b)+1)(s_2(q,a,n,b)+1)}\right) \\
l=\mathcal{M}od(k-1,s_3(q,r,\alpha,a,n,m,b)+1)
\end{array}\right.
\end{equation}
The function $\mathcal{M}od$ corresponds to the modulo operation. Second,
\begin{equation}
\sum_{a=0}^2\sum_{b=0}^1\sum_{\phi=0}^2\sum_{n=1}^{E\left(\frac{q}{2}\right)-a}\sum_{m=1}^n \Leftrightarrow \sum_{d=0}^{9E\left(\frac{q}{2}\right)^2-9E\left(\frac{q}{2}\right)+5}
\end{equation}
with
\begin{equation}
\left\lbrace
\begin{array}{lc}
b=B(d,q) \\
a=A(d,q) \\
\phi=\varphi(d,q) \\
n=N(d,q) \\
m=M(d,q)
\end{array}\right.
\end{equation}
The two prior sums can be changed into
\begin{equation}
\sum_{d=0}^{9E\left(\frac{q}{2}\right)^2-9E\left(\frac{q}{2}\right)+5}\sum_{i=1}^{(s_1(p,q,\alpha,a,n,m,b)+1)(s_2(q,a,n,b)+1)(s_3(q,r,\alpha,a,n,m,b)+1)} \Leftrightarrow \sum_{\tau=1}^{\sigma(p,q,r,\alpha)}
\end{equation}
with 
\begin{align}
\begin{array}{lc}
d=D(p,q,r,\alpha,\tau)\\
i=\mathcal{I}(p,q,r,\alpha,\tau)
\end{array}
\end{align}
and
\small
\begin{equation}
\sigma(p,q,r,\alpha)=\sum\limits_{d=0}^{9E\left(\frac{q}{2}\right)^2-9E\left(\frac{q}{2}\right)+5} (s_1(p,q,\alpha,a,n,m,b)+1)(s_2(q,a,n,b)+1)(s_3(q,r,\alpha,a,n,m,b)+1)
\end{equation}
\normalsize
Formal calculations provide a simpler formula for $\sigma$ given by \eqref{sigma}.\newline
Then, the equation \eqref{Step1} becomes
\begin{multline}
u^\alpha_{\omega,p,q,r}=\sum_{\chi=0}^{8E\left(\frac{q}{2}\right)}F(z,\omega,p,q,r,\alpha,n)
 u^{\kappa(z,\alpha)}_{\omega-1,\xi(z,p,q,\alpha,n),\Delta(z,q),\epsilon(z,q,r,\alpha,n)}+\\
\frac{1}{\omega}\sum_{k=0}^{\omega-1}\sum_{\tau=1}^{\sigma(p,q,r,\alpha)}G(p,q,r,\alpha,d,i)
u^{B(d,q)(\alpha+(A(d,q)-2)(2\alpha-3)A(d,q))}_{k,W_1(p,q,r,\alpha,d,i),W_0(p,q,r,\alpha,d,i),W_2(p,q,r,\alpha,d,i)}\\
u^{\varphi(d,q)}_{\omega-1-k,V_1(p,q,r,\alpha,d,i),V_0(q,r,\alpha,d,i),V_2(q,r,\alpha,d,i)}
\label{step2_5}
\end{multline}
The double sum corresponds to one sum
\begin{equation}
\sum_{k=0}^{\omega-1}\sum_{\tau=1}^{\sigma(p,q,r,\alpha)} \Leftrightarrow \sum_{j=0}^{\omega\sigma(p,q,r,\alpha)-1}
\end{equation}
with
\begin{equation}
\left\lbrace
\begin{array}{lc}
\tau=\mathcal{T}(p,q,r,\alpha,j) \\
k=E\left(\frac{j}{\sigma(p,q,r,\alpha)}\right)
\end{array}\right.
\end{equation}
Both remaining sums are merged into one sum $\sum\limits_{i=0}^{\mathcal{F}(p,q,r,\alpha,\omega)}$ where the terms from $i=0$ to $i=\omega\sigma(p,q,r,\alpha)-1$ correspond to the second sum and the terms from $i=\omega\sigma(p,q,r,\alpha)$ to $i=\omega\sigma(p,q,r,\alpha)+8E\left(\frac{q}{2}\right)$ correspond to the first sum. Then,
\begin{multline}
u^\alpha_{\omega,p,q,r}=\sum_{i=0}^{\mathcal{F}(p,q,r,\alpha,\omega)}\Gamma(\omega,p,q,r,\alpha,i)
u^{\beta(\omega,p,q,r,\alpha,i)}_{\mathcal{E}(\omega,p,q,r,\alpha,i),T_1(\omega,p,q,r,\alpha,i),T_0(\omega,p,q,r,\alpha,i),T_2(\omega,p,q,r,\alpha,i)}\\
u^{\Phi(\omega,p,q,r,\alpha,i)}_{\Upsilon(\omega,p,q,r,\alpha,i),Z_1(\omega,p,q,r,\alpha,i),Z_0(\omega,p,q,r,\alpha,i),Z_2(\omega,p,q,r,\alpha,i)}
\label{step33}
\end{multline}
The equation \eqref{step33} is applicable for $\alpha=1$ and $2$ but also for $\alpha=0$. In the case $\alpha=0$, the sum is twice bigger and all the functions employed in \eqref{step33} successively correspond to the case $\alpha=1$ and $\alpha=2$ according to the equation \eqref{DisDiv}. 
\end{proof}

\paragraph{\textbf{Step 4: The solution}}
~~\\
The resolution of the equation \eqref{step3} leads to the following theorem
\begin{thm}
The velocity coefficients are equal to
\footnotesize
\begin{multline}
u^\alpha_{\omega,p,q,r}=\sum\limits_{i=0}^{\eta(p,q,r,\alpha,\omega,\omega-1)}\bigg\{ \bigg[\prod\limits_{j=1}^{\omega}\prod\limits_{k=1}^{2^{j-1}}\Gamma(\rho_\omega(i,1,k,j,p,q,r,\alpha,\omega),\rho_1(i,1,k,j,p,q,r,\alpha,\omega),\rho_0(i,1,k,j,p,q,r,\alpha,\omega), \\
 \rho_2(i,1,k,j,p,q,r,\alpha,\omega),\rho_\alpha(i,1,k,j,p,q,r,\alpha,\omega),\Theta(k-2+2^{j-1},i,p,q,r,\alpha,\omega,0))\bigg] \\
\bigg[\prod\limits_{k=1}^{2^\omega}u^{\aleph_\alpha(i,1,k,p,q,r,\alpha,\omega)}_{\aleph_\omega(i,1,k,p,q,r,\alpha,\omega),\aleph_1(i,1,k,p,q,r,\alpha,\omega),\aleph_0(i,1,k,p,q,r,\alpha,\omega),\aleph_2(i,1,k,p,q,r,\alpha,\omega)}\bigg]\bigg\}
\label{ucoef}
\end{multline}
\normalsize
The pressure coefficients are calculated by the equation \eqref{pressure}. 
With these coefficients,
\begin{equation}
\left\lbrace
\begin{array}{lc}
U^\alpha=\sum\limits_{\omega,p,q,r=0}^\infty u^\alpha_{\omega,p,q,r}t^\omega x^py^qz^r\\
P=\sum\limits_{\omega,p,q,r=0}^\infty P_{\omega,p,q,r}t^\omega x^py^qz^r\\
\end{array}\right.
\end{equation}
satisfies the incompressible Navier-Stokes equations \eqref{NS}.
\end{thm}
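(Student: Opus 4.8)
The plan is to prove the two assertions of the theorem in turn: first that the closed form \eqref{ucoef} genuinely computes the velocity coefficients, and second that the power series assembled from these coefficients (together with the pressure coefficients from \eqref{pressure}) solves \eqref{NS}. The first part is a solved-recurrence statement and I would establish it by induction on the time index $\omega$, unrolling the quadratic recurrence \eqref{step3}; the second part is then almost formal, since by the Step~3 proposition \eqref{step3} is equivalent to the coefficient-wise transcription of the Navier--Stokes equations themselves.

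For the first part, the key observation is that \eqref{step3} writes every $u^\alpha_{\omega,p,q,r}$ as a finite sum (over $i$) of a scalar $\Gamma$ times a product of exactly two velocity coefficients, each carrying a time index strictly below $\omega$, namely $\mathcal{E}=k$ and $\Upsilon=\omega-1-k$ with $0\le k\le\omega-1$. This is precisely the local shape of a single node of a binary tree, with $\Gamma$ sitting at the node and the two factors being its children. In the base case $\omega=0$ no recurrence is applied and \eqref{ucoef} must collapse to the prescribed initial coefficient: the internal product $\prod_{j=1}^{0}$ is empty and hence $1$, while the leaf product $\prod_{k=1}^{2^{0}}$ reduces to the single factor $u^\alpha_{0,p,q,r}$ fixed by the initial condition, so the formula holds trivially.

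For the induction step I would assume \eqref{ucoef} for every time index below $\omega$ and substitute the inductive expression into each of the two factors generated by one application of \eqref{step3}. Grafting two depth-$(\omega-1)$ trees beneath the new root supplied by $\Gamma$ yields a complete binary tree of depth $\omega$, which has $2^{\omega}$ leaves and $\sum_{j=1}^{\omega}2^{j-1}=2^{\omega}-1$ internal nodes; these two totals match exactly the leaf product $\prod_{k=1}^{2^{\omega}}u^{\ldots}$ and the double $\Gamma$-product $\prod_{j=1}^{\omega}\prod_{k=1}^{2^{j-1}}$ in \eqref{ucoef}. The heap-style index $k-2+2^{j-1}$ passed to $\Theta$ is just the breadth-first enumeration of the internal nodes, and the functions $\rho_{*}$ and $\aleph_{*}$ are to be read as the compositions of the single-step index maps $\mathcal{E},\Upsilon,T_i,Z_i,\beta,\Phi$ along each root-to-leaf path. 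The only delicate point is that the two children in \eqref{step3} need not both carry time index $\omega-1$, so a branch may reach time $0$ before level $\omega$; such branches are padded to full depth by the convention that $u^\alpha_{\omega,p,q,r}=1$ whenever an index is negative, which is what lets the expansion be written uniformly as a complete tree. I expect the main obstacle to be exactly this bookkeeping: checking that the $\omega$-fold composition of the Step~3 maps reproduces verbatim the tree-indexed functions $\rho_{*}$, $\aleph_{*}$, $\Theta$ and the summation bound $\eta$, and that the early-termination padding neither creates nor deletes terms. This verification is lengthy but entirely mechanical; the conceptual content is only that a quadratic recurrence unrolls into a binary tree.

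For the second part I would argue that the coefficients defined by \eqref{ucoef}, supplemented by \eqref{pressure} for the pressure, satisfy by construction the continuity recurrence \eqref{DisDiv} and the momentum recurrence \eqref{momentuma}: indeed \eqref{step3} is equivalent to these relations by the Step~3 proposition, and \eqref{ucoef} solves \eqref{step3}. Since \eqref{DisDiv}, \eqref{momentuma} and \eqref{pressure} are exactly the coefficient-wise form obtained by inserting the power series into \eqref{NS}, the constructed series annihilate \eqref{NS} coefficient by coefficient, i.e. as formal power series. It then remains to pass from formal to genuine solutions: on the common domain of convergence, which is assumed nonempty, the series may be differentiated term by term, so the formal identities become pointwise identities and \eqref{NS} holds there. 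This completes the proof, the convergence discussion being deferred to the final section.
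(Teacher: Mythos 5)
Your overall strategy coincides with the paper's: establish \eqref{ucoef} by induction on the time index $\omega$, substitute the inductive expression into the two factors produced by one application of \eqref{step3}, and verify that the complete-binary-tree bookkeeping ($2^{\omega}$ leaves, $2^{\omega}-1$ internal nodes, $\Theta$ as a breadth-first node index, $\rho_{*}$ and $\aleph_{*}$ as compositions of the one-step maps along root-to-leaf paths, padding by convention) reproduces the formula. The paper's appendix is exactly this plan carried out, and, like you, it dispatches the key composition identities (its relation \eqref{Theta_rel}) as ``formal calculations''. Your second part, passing from the coefficient recurrences to a genuine solution of \eqref{NS} by term-by-term differentiation on the domain of convergence, is left implicit in the paper and is a sensible complement rather than a divergence.

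There is, however, a concrete gap in your case analysis. Your key justification --- that \eqref{step3} writes \emph{every} coefficient as a sum of $\Gamma$ times two factors ``each carrying a time index strictly below $\omega$'' --- is false precisely in the cases the paper verifies directly before starting its induction. For $\alpha=0$ and $q=2$, \eqref{step3} reduces to the continuity relation \eqref{DisDiv}, whose right-hand side involves $u^1_{\omega,p+1,1,r}$ and $u^2_{\omega,p,1,r+1}$: the time index is still $\omega$, and the recursion terminates only because these are boundary coefficients (second spatial index equal to $1$), not because time decreases. Likewise, when $p<2$ or $q<2$ or $r<2$ (or $\omega=0$), \eqref{step3} degenerates to the identity $u^\alpha_{\omega,p,q,r}=u^\alpha_{\omega,p,q,r}$ with $\mathcal{E}=\omega$, so again there is no time decrease; moreover your induction step implicitly needs the formula for such degenerate coefficients at times below $\omega$, since the children $u^{\beta}_{\mathcal{E},T_1,T_0,T_2}$ generated by \eqref{step3} can themselves have spatial indices $\leq 1$. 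The paper resolves this by first proving \eqref{ucoef} directly, for all $\omega$, in the two special cases (the collapse to an identity when $p<2$ or $q<2$ or $r<2$, and the case $\alpha=0$, $q=2$ where the formula reproduces \eqref{DisDiv}), and only then running the induction on the remaining general case $\omega>0$, $p,q,r>1$, $(\alpha,q)\neq(0,2)$, where $\mathcal{E}<\omega$ and $\Upsilon<\omega$ do hold. Your $\omega=0$ base case and the negative-index convention do not substitute for these two direct verifications; note also that the padding mechanism rests not only on the unit convention for negative indices but on the identity behaviour of the index maps (and $\Gamma=1$) whenever $P_3$ holds, which is what propagates a terminated boundary coefficient down to a leaf of the full-depth tree.
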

All the functions required to calculate the coefficients are defined in the appendix A. Note that the functions $\aleph_x$ are such that the solution \eqref{ucoef} only depends on the boundary and initial conditions. The proof of the theorem is presented in the appendix B.\newline
The solution requires the calculation of recursive functions ($\rho_x$, $\Theta$ and $\aleph_x$) which demands excessive computational cost. No simple formulations for these functions were figured out and building abacuses seems to be the easiest way to reduce calculation time. These functions should contain the complexity of turbulence. 

\subsection{Domain of convergence}
The complex expression of $u^\alpha_{\omega,p,q,r}$ seems prohibitive to find the domain of convergence as a function of boundary and initial conditions. Therefore, the existence of an analytic solution cannot be maintained in the general case. \newline
However, a sufficient condition to have a domain of convergence of $[0,1[^4$ can be given. Let $max~\Gamma$ be
\begin{multline}
max~\Gamma=\displaystyle \max_{i,j,k}\mid\Gamma(\rho_\omega(i,1,k,j,p,q,r,\alpha,\omega),\rho_1(i,1,k,j,p,q,r,\alpha,\omega),\rho_0(i,1,k,j,p,q,r,\alpha,\omega), \\
 \rho_2(i,1,k,j,p,q,r,\alpha,\omega),\rho_\alpha(i,1,k,j,p,q,r,\alpha,\omega),\Theta(k-2+2^{j-1},i,p,q,r,\alpha,\omega,0))\mid
\end{multline}
and $max~u_\aleph$ be
\begin{multline}
max~u_\aleph=\displaystyle \max_{i,k}\mid u^{\aleph_\alpha(i,1,k,p,q,r,\alpha,\omega)}_{\aleph_\omega(i,1,k,p,q,r,\alpha,\omega),\aleph_1(i,1,k,p,q,r,\alpha,\omega),\aleph_0(i,1,k,p,q,r,\alpha,\omega),\aleph_2(i,1,k,p,q,r,\alpha,\omega)}\mid
\end{multline}
According to the equation \eqref{ucoef}, 
\begin{equation}
\mid u^\alpha_{\omega,p,q,r}\mid\leq\mid\eta(p,q,r,\alpha,\omega,\omega-1)+1\mid \left(max~\Gamma\right)^{2^\omega-1}\left(max~u_\aleph\right)^{2^\omega}
\end{equation}
Then, since $max~\Gamma>1$, the inequality \eqref{suff} is a sufficient condition to ensure a non-empty domain of convergence
\begin{equation}
max~u_\aleph\leq\frac{1}{max~\Gamma\mid\eta(p,q,r,\alpha,\omega,\omega-1)+1\mid^{1/2^\omega}}
\label{suff}
\end{equation}
In such a case, the domain of convergence at least contains $D=\{(t,x,y,z)\in ]-1,1[^4\}$. To go even further in this approach, the condition \eqref{suff2} assures the convergence on $\mathbb{R}^4$.
\begin{equation}
max~u_\aleph\leq\frac{1}{\omega!p!q!r!}\frac{1}{max~\Gamma\mid\eta(p,q,r,\alpha,\omega,\omega-1)+1\mid^{1/2^\omega}}
\label{suff2}
\end{equation}
In both cases, the solution deriving from \eqref{ucoef} takes part in the set of solutions. Moreover, the solution is analytic on a part or the whole space $\mathbb{R}^4$. Therefore, under some conditions such that \eqref{suff2}, the prior analysis proves the existence of a smooth solution to the Navier-Stokes equations. Note that the existence conditions above are rude and less constraining limitations could be sufficient. 
\section{Conclusion}
The investigation concerns the solution of the incompressible Navier-Stokes equations resulting from the application of the power series method. This approach demands the solution be a power series.\newline
Since this technique is unusual for multidimensional problems, an accessible differential equation is solved with the aim of displaying some challenges. In particular, the boundary conditions need to be of the form of power series.\newline
The power series method is then applied to the incompressible Navier-Stokes equations. All the steps leading to the analytical solution are itemized and proven. However, the complexity of the solution makes it difficult to estimate the domain of convergence. Then, the existence of a power series (smooth) solution to the Navier-Stokes equations is established only under specific conditions.\newline 
This study constitutes advances in fluid mechanics since no general analytical solution was established before. But to complete this analysis, the domain of convergence must be studied in a general context. Finer conditions depending on the boundary and initial conditions should exist. This achievement would contribute to a better understanding of the analytic solutions to the Navier-Stokes equations.

\newpage

\appendix
\section{Functions}
The appendix itemizes the functions necessary in the calculation of the analytical solution \eqref{ucoef}. Hereinafter, $E$ is the floor function, $E_+$ is the floor function which is equal to zero when the argument is negative, $\mathcal{H}$ is the Heaviside step function, $\delta$ is the Kronecker delta and $\mathcal{M}od(x,y)$ means $x$ modulo $y$. 
\begin{equation}
Ip(x)=
\left\lbrace
\begin{array}{lcc}
2&\text{ if $E\left(\frac{x}{2}\right)=\frac{x}{2}$}\\
\\
1&\text{ if $E\left(\frac{x}{2}\right)\neq\frac{x}{2}$}
\end{array}\right.
\end{equation}
\\
\begin{equation}
Ip_b(j,k)=
\left\lbrace
\begin{array}{lcc}
0&\text{ if $j=0$}\\
Ip\left(E\left(\frac{k+1-2^{j-1}}{2^{j-1}}\right)\right)&\text{ if $j\neq 0$}\\
\end{array}\right.
\end{equation}
\\
\begin{equation}
\mathcal{R}(x,y)=x-y E\left(\frac{x}{y}\right)
\label{R}
\end{equation}
\\
\begin{equation}
\xi(z,p,q,\alpha,n)=
\left\lbrace
\begin{array}{lcc}
-1&\text{ if }z=0 \\
p+2(\alpha-1)&\text{ if }z=1 \\
p&\text{ if }z=2 \\
p+2-\alpha&\text{ if }z=3 \\
p+1&\text{ if }z=4 \\
p+(2-\alpha)\left(2E\left(\frac{q}{2}\right)+1\right)&\text{ if }z=5 \\
p+(-1)^{(\alpha+1)}2n+3\alpha-4+2(\alpha-1)E\left(\frac{q}{2}\right)&\text{ if }z=6 \\
p+(-1)^{(\alpha+1)}2n+2(\alpha-1)E\left(\frac{q}{2}\right)&\text{ if }z=7 \\
p+(-1)^{(\alpha+1)}(2n-1)+2(\alpha-1)E\left(\frac{q}{2}\right)&\text{ if }z=8 \\
\end{array}\right.
\end{equation}
\\
\begin{equation}
\Delta(z,q)=
\left\lbrace
\begin{array}{lcc}
-1&\text{ if }z=0 \\
q&\text{ if }z=1 \\
q+2&\text{ if }z=2 \\
q+1&\text{ if }z=3 \\
q&\text{ if }z=4 \\
q-2E\left(\frac{q}{2}\right)+1&\text{ if }z=5 \\
q-2E\left(\frac{q}{2}\right)+1&\text{ if }z=6 \\
q-2E\left(\frac{q}{2}\right)+2&\text{ if }z=7 \\
q-2E\left(\frac{q}{2}\right)+2&\text{ if }z=8 \\
\end{array}\right.
\end{equation}
\\
\begin{equation}
\epsilon(z,q,r,\alpha,n)=
\left\lbrace
\begin{array}{lcc}
-1&\text{ if }z=0 \\
r+2(2-\alpha)&\text{ if }z=1 \\
r&\text{ if }z=2 \\
r+\alpha-1&\text{ if }z=3 \\
r+1&\text{ if }z=4 \\
r+(\alpha-1)\left(2E\left(\frac{q}{2}\right)+1\right)&\text{ if }z=5 \\
r+(-1)^{\alpha}2n-3\alpha+5+2(2-\alpha)E\left(\frac{q}{2}\right)&\text{ if }z=6 \\
r+(-1)^{\alpha}2n+2(2-\alpha)E\left(\frac{q}{2}\right)&\text{ if }z=7 \\
r+(-1)^{\alpha}(2n-1)+2(2-\alpha)E\left(\frac{q}{2}\right)&\text{ if }z=8 \\
\end{array}\right.
\end{equation}
\\
\begin{equation}
\kappa(z,\alpha)=
\left\lbrace
\begin{array}{lcc}
-1&\text{ if }z=0 \\
\alpha&\text{ if }z=1 \\
\alpha&\text{ if }z=2 \\
0&\text{ if }z=3 \\
3-\alpha&\text{ if }z=4 \\
0&\text{ if }z=5 \\
0&\text{ if }z=6 \\
\alpha&\text{ if }z=7 \\
3-\alpha&\text{ if }z=8 \\
\end{array}\right.
\end{equation}
\\
\begin{equation}
s_1(p,q,\alpha,a,n,m,b)=
\left\lbrace
\begin{array}{lcc}
p+(2-\alpha)\left(2E\left(\frac{q}{2}\right)+1-b-2n\right)&\text{ if }a=0 \\
p+(-1)^{\alpha}(2m-b)+(2-\alpha)(2n+1-b)&\text{ if }a=1 \\
p+2(-1)^{\alpha}m+(2-\alpha)(2n+3-b)&\text{ if }a=2 \\
\end{array}\right.
\end{equation}
\\
\begin{equation}
s_2(q,a,n,b)=
\left\lbrace
\begin{array}{lcc}
q-\left(2E\left(\frac{q}{2}\right)+1-b-2n\right)&\text{ if }a=0 \\
q-1-2n+b&\text{ if }a=1 \\
q-2n-3+b&\text{ if }a=2 \\
\end{array}\right.
\end{equation}
\\
\begin{equation}
s_3(q,r,\alpha,a,n,m,b)=
\left\lbrace
\begin{array}{lcc}
r+(\alpha-1)\left(2E\left(\frac{q}{2}\right)+1-b-2n\right)&\text{ if }a=0 \\
r+(-1)^{\alpha+1}(2m-b)+(\alpha-1)(2n+1-b)&\text{ if }a=1 \\
r+2(-1)^{\alpha+1}m+(\alpha-1)(2n+3-b)&\text{ if }a=2 \\
\end{array}\right.
\end{equation}
\\
\begin{equation}
B(d,q)=E\left(\frac{1}{4}\left(1+E\left(\frac{d}{\frac{3}{2}E\left(\frac{q}{2}\right)^2-\frac{3}{2}E\left(\frac{q}{2}\right)+1}\right)\right)\right)
\end{equation}
\\
\begin{equation}
A(d,q)=
\left\lbrace
\begin{array}{lcc}
0&\text{ if }\mathcal{M}od\left(d,\frac{3}{2}E\left(\frac{q}{2}\right)^2-\frac{3}{2}E\left(\frac{q}{2}\right)+1\right)<\frac{1}{2}E\left(\frac{q}{2}\right)\left(E\left(\frac{q}{2}\right)+1\right) \\
1&\text{ if } \frac{1}{2}E\left(\frac{q}{2}\right)\left(E\left(\frac{q}{2}\right)+1\right)-1<\mathcal{M}od\left(d,\frac{3}{2}E\left(\frac{q}{2}\right)^2-\frac{3}{2}E\left(\frac{q}{2}\right)+1\right)<E\left(\frac{q}{2}\right)^2 \\
2&\text{ if }\mathcal{M}od\left(d,\frac{3}{2}E\left(\frac{q}{2}\right)^2-\frac{3}{2}E\left(\frac{q}{2}\right)+1\right)>E\left(\frac{q}{2}\right)^2-1 \\
\end{array}\right.
\end{equation}
\\
\begin{equation}
\varphi(d,q)=E\left(\frac{d}{\frac{3}{2}E\left(\frac{q}{2}\right)^2-\frac{3}{2}E\left(\frac{q}{2}\right)+1}\right)-3E\left(\frac{1}{4}\left(1+E\left(\frac{d}{\frac{3}{2}E\left(\frac{q}{2}\right)^2-\frac{3}{2}E\left(\frac{q}{2}\right)+1}\right)\right)\right)
\end{equation}
\\
\begin{multline}
\gamma(d,q)=d+1-\left(\frac{3}{2}E\left(\frac{q}{2}\right)^2-\frac{3}{2}E\left(\frac{q}{2}\right)+1\right)E\left(\frac{d}{\frac{3}{2}E\left(\frac{q}{2}\right)^2-\frac{3}{2}E\left(\frac{q}{2}\right)+1}\right)\\-\frac{1}{2}\left(A(d,q)E\left(\frac{q}{2}\right)^2+A(d,q)(2-A(d,q))E\left(\frac{q}{2}\right)\right)
\end{multline}
\\
\normalsize
\begin{equation}
N(d,q)=E\left(\frac{1}{2}\left(1+\sqrt{8\gamma(d,q)-7}\right)\right)
\end{equation}
\\
\begin{equation}
M(d,q)=\gamma(d,q)-\frac{1}{2}\left(\left(N(d,q)-1\right)^2+N(d,q)-1\right)
\end{equation}
\\
\begin{multline}
S(p,q,r,\alpha,d)=\left(s_1\left(p,q,\alpha,A(d,q),N(d,q),M(d,q),B(d,q)\right)+1\right)\\ \left(s_2\left(q,A(d,q),N(d,q),B(d,q)\right)+1\right) 
\left(s_3\left(q,r,\alpha,A(d,q),N(d,q),M(d,q),B(d,q)\right)+1\right)
\end{multline}
\\
\begin{equation}
Y_0(q,a,n,b)=
\left\lbrace
\begin{array}{lcc}
b-1+q-2E\left(\frac{q}{2}\right)+2n&\text{ if }a=0 \\
b-1+q-2n&\text{ if }a=1 \\
b+q-2n-3&\text{ if }a=2 \\
\end{array}\right.
\end{equation}
\\
\begin{equation}
Y_1(p,q,\alpha,a,n,m,b)=
\left\lbrace
\begin{array}{lcc}
p+(2-\alpha)(1-b)+(2-\alpha)\left(2E\left(\frac{q}{2}\right)-2n\right)&\text{ if }a=0 \\
p+(\alpha-1)(1-b)+(-1)^\alpha(2m-1)+2n(2-\alpha)&\text{ if }a=1 \\
p+(2-\alpha)(1-b)+(-1)^\alpha 2m+(2n+2)(2-\alpha)&\text{ if }a=2 \\
\end{array}\right.
\end{equation}
\\
\begin{equation}
Y_2(q,r,\alpha,a,n,m,b)=
\left\lbrace
\begin{array}{lcc}
r+(\alpha-1)(1-b)+(\alpha-1)\left(2E\left(\frac{q}{2}\right)-2n\right)&\text{ if }a=0 \\
r+(2-\alpha)(1-b)+(-1)^{\alpha+1}(2m-1)+2n(\alpha-1)&\text{ if }a=1 \\
r+(\alpha-1)(1-b)+(-1)^{\alpha+1}2m+(2n+2)(\alpha-1)&\text{ if }a=2 \\
\end{array}\right.
\end{equation}
\\
\begin{equation}
H(q,a,n,m,b)=
\left\lbrace
\begin{array}{lcc}
\delta_{n,m}(-1)^{n+b+E\left(\frac{q}{2}\right)}&\text{ if }a=0 \\
(-1)^{n+b+\frac{1}{2}a(a-1)+\frac{1}{2}(a-1)(a-2)E\left(\frac{q}{2}\right)}\frac{(n-2+a)!}{(m-2+a)!(n-m)!}&\text{ if }a>0 \\
\end{array}\right.
\end{equation}
\\
\begin{equation}
L_0(q,r,\alpha,d,i)=E\left(\frac{i-1}{s_3(q,r,\alpha,A(d,q),N(d,q),M(d,q),B(d,q))+1}\right)
\end{equation}
\\
\footnotesize
\begin{equation}
L_1(q,r,\alpha,d,i)=E\left(\frac{i-1}{(s_3(q,r,\alpha,A(d,q),N(d,q),M(d,q),B(d,q))+1)(s_2(q,A(d,q),N(d,q),B(d,q))+1)}\right)
\end{equation}
\footnotesize
\begin{multline}
G(p,q,r,\alpha,d,i)=H(q,A(d,q),N(d,q),M(d,q),B(d,q))\\
\bigg\{\frac{1}{2}(\varphi(d,q)-1)(\varphi(d,q)-2)\left[L_0(q,r,\alpha,d,i)-
(s_2(q,A(d,q),N(d,q),B(d,q))+1)L_1(q,r,\alpha,d,i)\right] \\ + 
\varphi(d,q)(2-\varphi(d,q))L_1(q,r,\alpha,d,i)+
\frac{1}{2}\varphi(d,q)(\varphi(d,q)-1)\mathcal{M}od(i-1,s_3(q,r,\alpha,A(d,q),N(d,q),M(d,q),B(d,q))\\
+1)+1\bigg\}
\text{\scriptsize $\frac{Y_0(q,A(d,q),N(d,q),B(d,q))!Y_1(p,q,\alpha,A(d,q),N(d,q),M(d,q),B(d,q))!Y_2(q,r,\alpha,A(d,q),N(d,q),M(d,q),B(d,q))!}{p!q!r!}$}
\end{multline}
\normalsize
\begin{multline}
W_0(p,q,r,\alpha,d,i)=\frac{1}{2}(\varphi(d,q)-1)(\varphi(d,q)-2)+L_0(q,r,\alpha,d,i)-\\
(s_2(q,A(d,q),N(d,q),B(d,q))+1)L_1(q,r,\alpha,d,i)
\end{multline}
\begin{equation}
W_1(p,q,r,\alpha,d,i)=\varphi(d,q)(2-\varphi(d,q))+L_1(q,r,\alpha,d,i)
\end{equation}
\begin{multline}
W_2(p,q,r,\alpha,d,i)=\frac{1}{2}\varphi(d,q)(\varphi(d,q)-1)+\\ \mathcal{M}od(i-1,s_3(q,r,\alpha,A(d,q),N(d,q),M(d,q),B(d,q))+1)
\end{multline}
\begin{multline}
V_0(q,r,\alpha,d,i)=Y_0(q,A(d,q),N(d,q),B(d,q))-L_0(q,r,\alpha,d,i)+\\
(s_2(q,A(d,q),N(d,q),B(d,q))+1)L_1(q,r,\alpha,d,i)
\end{multline}
\begin{equation}
V_1(p,q,r,\alpha,d,i)=Y_1(p,q,\alpha,A(d,q),N(d,q),M(d,q),B(d,q))-L_1(q,r,\alpha,d,i)
\end{equation}
\begin{multline}
V_2(q,r,\alpha,d,i)=Y_2(q,r,\alpha,A(d,q),N(d,q),M(d,q),B(d,q))-\\
\mathcal{M}od(i-1,s_3(q,r,\alpha,A(d,q),N(d,q),M(d,q),B(d,q))+1)
\end{multline}
\normalsize
\begin{multline}
F_0(\omega,p,q,r,\alpha)=\sum_{n=1}^{E\left(\frac{q}{2}\right)}(-1)^{E\left(\frac{q}{2}\right)}{{E\left(\frac{q}{2}\right)-1}\choose{n-1}}\left(\frac{(X_\alpha+2n)!(X_{3-\alpha}+2E\left(\frac{q}{2}\right)-2n)!}{p!q!r!}\right. \\
u^\alpha_{\omega,p+(-1)^{\alpha+1}2n+(\alpha-1)2E\left(\frac{q}{2}\right),q-2E\left(\frac{q}{2}\right),r+(-1)^\alpha 2n+2(2-\alpha)E\left(\frac{q}{2}\right)}+
\frac{(X_\alpha+2n-1)!\left(X_{3-\alpha}+2E\left(\frac{q}{2}\right)-2n+1\right)!}{p!q!r!} \\
\left.u^{3-\alpha}_{\omega,p+(-1)^{\alpha+1}(2n-1)+(\alpha-1)2E\left(\frac{q}{2}\right),q-2E\left(\frac{q}{2}\right),r+(-1)^\alpha (2n-1)+2(2-\alpha)E\left(\frac{q}{2}\right)}\right)
\end{multline}
\footnotesize
\begin{multline}
\widetilde{F_0}(\omega,p,q,r,\alpha)=\frac{1}{\omega}\sum_{n=1}^{E\left(\frac{q}{2}\right)}(-1)^{n+1}\left(\frac{(q-2n+2)!(X_\alpha+2n-2)!}{X_\alpha !q!}g^\alpha_{\omega-1,p+(2-\alpha)(2n-2),q-2n+2,r+(\alpha-1)(2n-2)}-\right.\\ \left.\frac{(q-2n+1)!(X_\alpha+2n-1)!}{X_\alpha !q!}g^0_{\omega-1,p+(2-\alpha)(2n-1),q-2n+1,r+(\alpha-1)(2n-1)}\right)+\frac{1}{\omega}\sum_{n=1}^{E\left(\frac{q}{2}\right)-1}\sum_{m=1}^{n}(-1)^n\left({n-1\choose m-1}\right.\\ \left.
\frac{(X_{3-\alpha}+2m-1)!(q-2n)!(X_\alpha-2m+2n+1)!}{p!q!r!}
g^{3-\alpha}_{\omega-1,p+(-1)^\alpha(2m-1)+2n(2-\alpha),q-2n,r+(-1)^{\alpha+1}(2m-1)+2(\alpha-1)n}-{{n}\choose{m}}\right.\\
\left.
\frac{(X_{3-\alpha}+2m)!(q-1-2n)!(X_\alpha-2m+2n+1)!}{p!q!r!}g^0_{\omega-1,p+2m(-1)^\alpha+(2-\alpha)(2n+1),q-1-2n,r+2m(-1)^{\alpha+1}+(\alpha-1)(2n+1)}\right)\\
+\frac{1}{\omega}\sum_{n=1}^{E\left(\frac{q}{2}\right)-2}\sum_{m=1}^{n}(-1)^{n+1}\binom{n}{m}\frac{(X_{3-\alpha}+2m)!(q-2-2n)!(X_\alpha-2m+2n+2)!}{p!q!r!}\\g^\alpha_{\omega-1,p+2m(-1)^\alpha+(2-\alpha)(2n+2),q-2-2n,r+2m(-1)^{\alpha+1}+(\alpha-1)(2n+2)}
\end{multline}
\normalsize
Notations are required to simplify the following formulas. Three propositions are introduced:
$$P_1:``\omega>0,~p>1,~q>1,~r>1"$$
$$P_2:``\omega<0\text{ or }p<0\text{ or }q<0\text{ or }r<0"$$
$$P_3:``\text{$\omega=0$ or $0\leq p\leq 1$ or $0\leq q\leq 1$ or $0\leq r\leq 1$}"$$
For convenience, the notation $F=F(z,\omega,p,q,r,\alpha,n)$ is used for the next equation,
\footnotesize
\begin{equation}
F=
\left\lbrace
\begin{array}{lc}
F_0(\omega,p,q,r,\alpha)+\widetilde{F_0}(\omega,p,q,r,\alpha)&\text{ if $z=0$ and $P_1$ is true}\\
u^\alpha_{\omega,p,q,r}&\text{ if $z=0$ and $P_1$ is false}\\
\frac{\nu}{\omega}(X_{3-\alpha}+1)(X_{3-\alpha}+2)&\text{ if $z=1$ and $n=1$} \\
\frac{\nu}{\omega}(q+1)(q+2) &\text{ if $z=2$ and $n=1$} \\
-\frac{\nu}{\omega}(q+1)(X_\alpha+1)&\text{ if $z=3$ and $n=1$} \\
-\frac{\nu}{\omega}(p+1)(r+1)\mathcal{H}(q-4)&\text{ if $z=4$ and $n=1$} \\
\frac{\nu}{\omega}(-1)^{E\left(\frac{q}{2}\right)}\frac{\left(X_\alpha+2E\left(\frac{q}{2}\right)+1\right)!\left(q-2E\left(\frac{q}{2}\right)+1\right)}{X_\alpha !q!}&\text{ if $z=5$ and $n=1$} \\
\frac{\nu}{\omega}(-1)^{E\left(\frac{q}{2}\right)}\binom{E\left(\frac{q}{2}\right)}{n-1}\frac{(X_\alpha+2n-1)!\left(X_{3-\alpha}+2E\left(\frac{q}{2}\right)-2n+2\right)!\left(1+q-2E\left(\frac{q}{2}\right)\right)}{p!q!r!}&\text{ if $z=6$} \\
2\frac{\nu}{\omega}(-1)^{E\left(\frac{q}{2}\right)+1}\binom{E\left(\frac{q}{2}\right)-1}{n-1}\frac{(X_\alpha+2n)!\left(X_{3-\alpha}+2E\left(\frac{q}{2}\right)-2n\right)!\left(1+2q-4E\left(\frac{q}{2}\right)\right)}{p!q!r!}&\text{ if $z=7$} \\
2\mathcal{H}(q-4)\frac{\nu}{\omega}(-1)^{E\left(\frac{q}{2}\right)+1}\binom{E\left(\frac{q}{2}\right)-1}{n-1}\frac{(X_\alpha+2n-1)!\left(X_{3-\alpha}+2E\left(\frac{q}{2}\right)-2n+1\right)!\left(1+2q-4E\left(\frac{q}{2}\right)\right)}{p!q!r!}
&\text{ if $z=8$} \\
0 &\text{ otherwise}\\
\end{array}\right.
\end{equation}
\normalsize
\begin{equation}
D(p,q,r,\alpha ,\tau )=\sum_{j=1}^{9E\left(\frac{q}{2}\right)^2-9E\left(\frac{q}{2}\right)+6}\mathcal{H}\left(\frac{\tau-1}{\sum_{k=0}^{j-1}S(p,q,r,\alpha,k)}-1\right)
\end{equation}
\begin{equation}
\mathcal{I}(p,q,r,\alpha,\tau)=\tau-\sum_{j=1}^{9E\left(\frac{q}{2}\right)^2-9E\left(\frac{q}{2}\right)+6}S(p,q,r,\alpha,j-1)\mathcal{H}\left(\frac{\tau-1}{\sum_{k=0}^{j-1}S(p,q,r,\alpha,k)}-1\right)
\end{equation}
\small
\begin{multline}
\sigma(p,q,r,\alpha)=\left[2E\left(\frac{q}{2}\right)^4+(4X_\alpha+6X_{3-\alpha}+8)E\left(\frac{q}{2}\right)^3+(9X_\alpha X_{3-\alpha}+1.5X_\alpha+3X_{3-\alpha}-8)\right.\\
\left.E\left(\frac{q}{2}\right)^2-(9X_{3-\alpha}X_\alpha+5.5X_\alpha+9X_{3-\alpha}+2)E\left(\frac{q}{2}\right)+
6X_\alpha X_{3-\alpha}+6X_\alpha+9X_{3-\alpha}+9\right]q-\\
3.2E\left(\frac{q}{2}\right)^5-(6X_\alpha+8X_{3-\alpha}+8)E\left(\frac{q}{2}\right)^4+(6X_{3-\alpha}+6X_\alpha-10X_{3-\alpha}X_\alpha+24)E\left(\frac{q}{2}\right)^3+
\\
(14X_{3-\alpha}+9X_\alpha+22.5X_\alpha X_{3-\alpha}-7)E\left(\frac{q}{2}\right)^2-(9X_\alpha+12X_{3-\alpha}+12.5X_\alpha X_{3-\alpha}-2.2)E\left(\frac{q}{2}\right)+\\3(X_{3-\alpha}+1)(X_\alpha+1)-8E\left(\frac{q}{2}\right)
\label{sigma}
\end{multline}
\normalsize
\begin{equation}
\mathcal{T}(p,q,r,\alpha,j)=j+1-\sigma(p,q,r,\alpha)E\left(\frac{j}{\sigma(p,q,r,\alpha)}\right)
\end{equation}
\\
\begin{equation}
\chi(\omega,p,q,r,\alpha,i)=i-\sigma(p,q,r,\alpha)\omega
\end{equation}
\\
\begin{equation}
\mathcal{N}(\omega,p,q,r,\alpha,i)=E\left(\frac{\chi(\omega,p,q,r,\alpha,i)-1}{8}\right)+1
\end{equation}
\scriptsize
\begin{equation}
\mathcal{F}(p,q,r,\alpha,\omega)=
\left\lbrace
\begin{array}{lcc}
0&\text{ if $p<2$ or $q<2$ or $r<2$ or $\omega\leq 0$} \\
1&\text{ if $a=0$ and $q=2$}\\
\sigma(p,q,r,\alpha)\omega+8E\left(\frac{q}{2}\right) &\text{ if $a\neq 0$, $\omega>0$ and $p,q,r>1$}\\
\sigma(p+1,q-1,r,1)\omega+\sigma(p,q-1,r+1,2)\omega+16E\left(\frac{q-1}{2}\right)+1 &\text{ otherwise}\\
\end{array}\right.
\end{equation}
\normalsize
Two propositions need to be introduced afresh:
$$R_1:``i<\sigma(p,q,r,\alpha)\omega"$$
$$R_2:``i\leq\mathcal{F}(p+1,q-1,r,1,\omega)"$$
To minimize the size of the following formulas, "elif" will mean "else if".
\scriptsize
\begin{equation}
T_0(\omega,p,q,r,\alpha,i)=
\left\lbrace
\begin{array}{lcc}
-1&\text{ if $P_2$ is true}\\
q&\text{ if $P_3$ is true and $P_2$ is false}\\
W_0\left(p,q,r,\alpha,D(p,q,r,\alpha,\mathcal{T}(p,q,r,\alpha,i)),\mathcal{I}(p,q,r,\alpha,\mathcal{T}(p,q,r,\alpha,i))\right)&\text{ elif ($\alpha=1$ or $2$) and $R_1$ is true}\\
\Delta(\chi(\omega,p,q,r,\alpha,i)-8E_+\left(\frac{\chi(\omega,p,q,r,\alpha,i)-1}{8}\right),q) &\text{ elif ($\alpha=1$ or $2$) and $R_1$ is false}\\
T_0(\omega,p+1,q-1,r,1,i) &\text{ elif $\alpha=0$ and $R_2$ is true}\\
T_0(\omega,p,q-1,r+1,2,i-\mathcal{F}(p+1,q-1,r,1,\omega)-1) &\text{ otherwise}\\
\end{array}\right.
\end{equation}
\begin{equation}
T_1(\omega,p,q,r,\alpha,i)=
\left\lbrace
\begin{array}{lcc}
-1&\text{ if $P_2$ is true}\\
p&\text{ if $P_3$ is true and $P_2$ is false}\\
W_1\left(p,q,r,\alpha,D(p,q,r,\alpha,\mathcal{T}(p,q,r,\alpha,i)),\mathcal{I}(p,q,r,\alpha,\mathcal{T}(p,q,r,\alpha,i))\right)&\text{ elif ($\alpha=1$ or $2$) and $R_1$ is true}\\
\xi(\chi(\omega,p,q,r,\alpha,i)-8E_+\left(\frac{\chi(\omega,p,q,r,\alpha,i)-1}{8}\right),p,q,\alpha,\mathcal{N}(\omega,p,q,r,\alpha,i)) &\text{ elif ($\alpha=1$ or $2$) and $R_1$ is false}\\
T_1(\omega,p+1,q-1,r,1,i) &\text{ elif $\alpha=0$ and $R_2$ is true}\\
T_1(\omega,p,q-1,r+1,2,i-\mathcal{F}(p+1,q-1,r,1,\omega)-1) &\text{ otherwise}\\
\end{array}\right.
\end{equation}
\begin{equation}
T_2(\omega,p,q,r,\alpha,i)=
\left\lbrace
\begin{array}{lcc}
-1&\text{ if $P_2$ is true}\\
r&\text{ if $P_3$ is true and $P_2$ is false}\\
W_2\left(p,q,r,\alpha,D(p,q,r,\alpha,\mathcal{T}(p,q,r,\alpha,i)),\mathcal{I}(p,q,r,\alpha,\mathcal{T}(p,q,r,\alpha,i))\right)&\text{ elif ($\alpha=1$ or $2$) and $R_1$ is true}\\
\epsilon(\chi(\omega,p,q,r,\alpha,i)-8E_+\left(\frac{\chi(\omega,p,q,r,\alpha,i)-1}{8}\right),q,r,\alpha,\mathcal{N}(\omega,p,q,r,\alpha,i)) &\text{ elif ($\alpha=1$ or $2$) and $R_1$ is false}\\
T_2(\omega,p+1,q-1,r,1,i) &\text{ elif $\alpha=0$ and $R_2$ is true}\\
T_2(\omega,p,q-1,r+1,2,i-\mathcal{F}(p+1,q-1,r,1,\omega)-1) &\text{ otherwise}\\
\end{array}\right.
\end{equation}
\begin{equation}
\beta(\omega,p,q,r,\alpha,i)=
\left\lbrace
\begin{array}{lcc}
\alpha&\text{ if $P_2\vee P_3$ is true}\\
B(d,q)(\alpha+A(d,q)(A(d,q)-2)(2\alpha-3))&\text{ elif ($\alpha=1$ or $2$) and $R_1$ is true}\\
\kappa(\chi(\omega,p,q,r,\alpha,i)-8E_+\left(\frac{\chi(\omega,p,q,r,\alpha,i)-1}{8}\right),\alpha) &\text{ elif ($\alpha=1$ or $2$) and $R_1$ is false}\\
\beta(\omega,p+1,q-1,r,1,i) &\text{ elif $\alpha=0$ and $R_2$ is true}\\
\beta(\omega,p,q-1,r+1,2,i-\mathcal{F}(p+1,q-1,r,1,\omega)-1) &\text{ otherwise}\\
\end{array}\right.
\end{equation}
\begin{equation}
\mathcal{E}(\omega,p,q,r,\alpha,i)=
\left\lbrace
\begin{array}{lcc}
-1&\text{ if $P_2$ is true}\\
\omega&\text{ if $P_3$ is true and $P_2$ is false}\\
E\left(\frac{i}{\sigma(p,q,r,\alpha)}\right)&\text{ elif ($\alpha=1$ or $2$) and $E\left(\frac{i}{\sigma(p,q,r,\alpha)}\right)<\omega$}\\
\omega-1 &\text{ elif ($\alpha=1$ or $2$) and $\omega\leq E\left(\frac{i}{\sigma(p,q,r,\alpha)}\right)$}\\
\mathcal{E}(\omega,p+1,q-1,r,1,i) &\text{ elif $\alpha=0$ and $R_2$ is true}\\
\mathcal{E}(\omega,p,q-1,r+1,2,i-\mathcal{F}(p+1,q-1,r,1,\omega)-1) &\text{ otherwise}\\
\end{array}\right.
\end{equation}
\begin{equation}
Z_0(\omega,p,q,r,\alpha,i)=
\left\lbrace
\begin{array}{lcc}
-1&\text{ if $P_2\vee P_3$ is true}\\
V_0\left(q,r,\alpha,D(p,q,r,\alpha,\mathcal{T}(p,q,r,\alpha,i)),\mathcal{I}(p,q,r,\alpha,\mathcal{T}(p,q,r,\alpha,i))\right)&\text{ elif $\alpha=1$ or $2$}\\
Z_0(\omega,p+1,q-1,r,1,i) &\text{ elif $\alpha=0$ and $R_2$ is true}\\
Z_0(\omega,p,q-1,r+1,2,i-\mathcal{F}(p+1,q-1,r,1,\omega)-1) &\text{ otherwise}\\
\end{array}\right.
\end{equation}
\begin{equation}
Z_1(\omega,p,q,r,\alpha,i)=
\left\lbrace
\begin{array}{lcc}
-1&\text{ if $P_2\vee P_3$ is true}\\
V_1\left(p,q,r,\alpha,D(p,q,r,\alpha,\mathcal{T}(p,q,r,\alpha,i)),\mathcal{I}(p,q,r,\alpha,\mathcal{T}(p,q,r,\alpha,i))\right)&\text{ elif $\alpha=1$ or $2$}\\
Z_1(\omega,p+1,q-1,r,1,i) &\text{ elif $\alpha=0$ and $R_2$ is true}\\
Z_1(\omega,p,q-1,r+1,2,i-\mathcal{F}(p+1,q-1,r,1,\omega)-1) &\text{ otherwise}\\
\end{array}\right.
\end{equation}
\begin{equation}
Z_2(\omega,p,q,r,\alpha,i)=
\left\lbrace
\begin{array}{lcc}
-1&\text{ if $P_2\vee P_3$ is true}\\
V_2\left(p,q,r,\alpha,D(p,q,r,\alpha,\mathcal{T}(p,q,r,\alpha,i)),\mathcal{I}(p,q,r,\alpha,\mathcal{T}(p,q,r,\alpha,i))\right)&\text{ elif $\alpha=1$ or $2$}\\
Z_2(\omega,p+1,q-1,r,1,i) &\text{ elif $\alpha=0$ and $R_2$ is true}\\
Z_2(\omega,p,q-1,r+1,2,i-\mathcal{F}(p+1,q-1,r,1,\omega)-1) &\text{ otherwise}\\
\end{array}\right.
\end{equation}
\scriptsize
\begin{equation}
\Phi(\omega,p,q,r,\alpha,i)=
\left\lbrace
\begin{array}{lcc}
\alpha&\text{ if $P_2\vee P_3$ is true}\\
\varphi\left(D(p,q,r,\alpha,\mathcal{T}(p,q,r,\alpha,i)),q\right)&\text{ elif $\alpha=1$ or $2$}\\
\Phi(\omega,p+1,q-1,r,1,i) &\text{ elif $\alpha=0$ and $R_2$ is true}\\
\Phi(\omega,p,q-1,r+1,2,i-\mathcal{F}(p+1,q-1,r,1,\omega)-1) &\text{ otherwise}\\
\end{array}\right.
\end{equation}
\begin{equation}
\Upsilon(\omega,p,q,r,\alpha,i)=
\left\lbrace
\begin{array}{lcc}
-1&\text{ if $P_2\vee P_3$ is true}\\
\omega-1-E\left(\frac{i}{\sigma(p,q,r,\alpha)}\right)&\text{ elif $\alpha=1$ or $2$}\\
\Upsilon(\omega,p+1,q-1,r,1,i) &\text{ elif $\alpha=0$ and $R_2$ is true}\\
\Upsilon(\omega,p,q-1,r+1,2,i-\mathcal{F}(p+1,q-1,r,1,\omega)-1) &\text{ otherwise}\\
\end{array}\right.
\end{equation}
\begin{equation}
\Gamma(\omega,p,q,r,\alpha,i)=
\left\lbrace
\begin{array}{lcc}
1&\text{ if $P_3$ is true}\\
\frac{1}{\omega}G\left(p,q,r,\alpha,D(p,q,r,\alpha,\mathcal{T}(p,q,r,\alpha,i)),\mathcal{I}(p,q,r,\alpha,\mathcal{T}(p,q,r,\alpha,i))\right)&\text{ elif ($\alpha=1$ or $2$) and $R_1$ is true}\\
F(\chi(\omega,p,q,r,\alpha,i)-8E_+\left(\frac{\chi(\omega,p,q,r,\alpha,i)-1}{8}\right),\omega,p,q,r,\alpha,\mathcal{N}(\omega,p,q,r,\alpha,i)) &\text{ elif ($\alpha=1$ or $2$) and $R_1$ is false}\\
-\frac{p+1}{q}\Gamma(\omega,p+1,q-1,r,1,i) &\text{ elif $\alpha=0$ and $R_2$ is true}\\
-\frac{r+1}{q}\Gamma(\omega,p,q-1,r+1,2,i-\mathcal{F}(p+1,q-1,r,1,\omega)-1) &\text{ elif $\alpha=0$ and $R_2$ is false}\\
\end{array}\right.
\label{Gamma}
\end{equation}
\begin{equation}
\eta(p,q,r,\alpha,\omega,ind)=
\left\lbrace
\begin{array}{lcc}
0&\text{ if $P_2\vee P_3$ is true}\\
1&\text{ if $\alpha=0$ and $q=2$} \\
\mathcal{F}(p,q,r,\alpha,\omega) &\text{ if $P_2\vee P_3$ is false and $ind=0$} \\
\sum\limits_{k=0}^{\mathcal{F}(p,q,r,\alpha,\omega)}\left\{\left[\eta(T_1(\omega,p,q,r,\alpha,k),T_0(\omega,p,q,r,\alpha,k),T_2(\omega,p,q,r,\alpha,k),\right.\right.\\ \left.
\beta(\omega,p,q,r,\alpha,k),
\mathcal{E}(\omega,p,q,r,\alpha,k),ind-1)+1\right]&\text{ otherwise}\\
\left[\eta(Z_1(\omega,p,q,r,\alpha,k),Z_0(\omega,p,q,r,\alpha,k),Z_2(\omega,p,q,r,\alpha,k),\right. \\ \left.\left.\Phi(\omega,p,q,r,\alpha,k),\Upsilon(\omega,p,q,r,\alpha,k),ind-1)+1\right]
\right\}-1 
\end{array}\right. 
\label{eta}
\end{equation}
\footnotesize
\begin{equation}
\widetilde{\eta_1}(p,q,r,\alpha,\omega,k,ind)=
\left\lbrace
\begin{array}{lcc}
\mathcal{F}(T_1(\omega,p,q,r,\alpha,k),T_0(\omega,p,q,r,\alpha,k),T_2(\omega,p,q,r,\alpha,k),\\
\beta(\omega,p,q,r,\alpha,k),
\mathcal{E}(\omega,p,q,r,\alpha,k)) &\text{ if $ind=0$}\\
\eta(T_1(\omega,p,q,r,\alpha,k),T_0(\omega,p,q,r,\alpha,k),T_2(\omega,p,q,r,\alpha,k),\\
\beta(\omega,p,q,r,\alpha,k),
\mathcal{E}(\omega,p,q,r,\alpha,k),ind) &\text{ if $ind>0$}\\
\end{array}\right.
\end{equation}
\begin{equation}
\widetilde{\eta_2}(p,q,r,\alpha,\omega,k,ind)=
\left\lbrace
\begin{array}{lcc}
\mathcal{F}(Z_1(\omega,p,q,r,\alpha,k),Z_0(\omega,p,q,r,\alpha,k),Z_2(\omega,p,q,r,\alpha,k),\\
\Phi(\omega,p,q,r,\alpha,k),
\Upsilon(\omega,p,q,r,\alpha,k)) &\text{ if $ind=0$}\\
\eta(Z_1(\omega,p,q,r,\alpha,k),Z_0(\omega,p,q,r,\alpha,k),Z_2(\omega,p,q,r,\alpha,k),\\
\Phi(\omega,p,q,r,\alpha,k),
\Upsilon(\omega,p,q,r,\alpha,k),ind) &\text{ if $ind>0$}\\
\end{array}\right.
\end{equation}
\normalsize
\begin{equation}
\mathcal{S}\widetilde{\eta}(j,p,q,r,\alpha,\omega,ind)=\sum\limits_{k=0}^{j-1}[\widetilde{\eta_1}(p,q,r,\alpha,\omega,k,ind)+1][\widetilde{\eta_2}(p,q,r,\alpha,\omega,k,ind)+1]
\end{equation}
\scriptsize
\begin{equation}
\theta(i,p,q,r,\alpha,\omega,n,ind)=
\left\lbrace
\begin{array}{lcc}
0 &\text{ if $P_2\vee P_3$ is true}\\
i &\text{ elif $ind=0$}\\
j \text{ such as } i\in\left[\mathcal{S}\widetilde{\eta}(j,p,q,r,\alpha,\omega,ind-1);\mathcal{S}\widetilde{\eta}(j+1,p,q,r,\alpha,\omega,ind-1)\right] &\text{ elif $ind>0$ and $n=0$}\\
E\left(\frac{i-\mathcal{S}\widetilde{\eta}(\theta(i,p,q,r,\alpha,\omega,0,ind),p,q,r,\alpha,\omega,ind-1)}{
\widetilde{\eta_2}(p,q,r,\alpha,\omega,\theta(i,p,q,r,\alpha,\omega,0,ind),ind-1)+1}\right) &\text{ elif $ind>0$ and $n=1$} \\
\mathcal{R}(i-\mathcal{S}\widetilde{\eta}(\theta(i,p,q,r,\alpha,\omega,0,ind),p,q,r,\alpha,\omega,ind-1),\\
\widetilde{\eta_2}(p,q,r,\alpha,\omega,\theta(i,p,q,r,\alpha,\omega,0,ind),ind-1)+1) &\text{ elif $ind>0$ and $n=2$} \\
\end{array}\right.
\label{theta}
\end{equation}
\begin{equation}
C_0(m,h,l,i,p,q,r,\alpha,\omega)=
\begin{cases}
q~~~~~~~~\text{  if $E\left(\frac{ln(m+1)}{ln(2)}\right)-h<1$ or $l>E\left(\frac{ln(m+1)}{ln(2)}\right)-1-h$}\\
\\
T_0(C_\omega(m,h,l+1,i,p,q,r,\alpha,\omega),C_1(m,h,l+1,i,p,q,r,\alpha,\omega),&\text{ } \\ 
C_0(m,h,l+1,i,p,q,r,\alpha,\omega),C_2(m,h,l+1,i,p,q,r,\alpha,\omega),&\text{ elif $Ip\left(E\left(\frac{m+1-2^{l+h}}{2^{l+h}}\right)\right)$}\\ 
C_\alpha(m,h,l+1,i,p,q,r,\alpha,\omega),\Theta\left(E\left(\frac{m+1-2^{l+1+h}}{2^{l+1+h}}\right),i,p,q,r,\alpha,\omega,0\right))&\text{$~~~~~~~~~~~~~~=1$} \\
\\
Z_0(C_\omega(m,h,l+1,i,p,q,r,\alpha,\omega),C_1(m,h,l+1,i,p,q,r,\alpha,\omega),&\text{ } \\ 
C_0(m,h,l+1,i,p,q,r,\alpha,\omega),C_2(m,h,l+1,i,p,q,r,\alpha,\omega),&\text{ elif $Ip\left(E\left(\frac{m+1-2^{l+h}}{2^{l+h}}\right)\right)$}\\ 
C_\alpha(m,h,l+1,i,p,q,r,\alpha,\omega),\Theta\left(E\left(\frac{m+1-2^{l+1+h}}{2^{l+1+h}}\right),i,p,q,r,\alpha,\omega,0\right))&\text{$~~~~~~~~~~~~~~\neq 1$} \\
\end{cases}
\label{DebC}
\end{equation}
\begin{equation}
C_1(m,h,l,i,p,q,r,\alpha,\omega)=
\begin{cases}
p~~~~~~~~\text{  if $E\left(\frac{ln(m+1)}{ln(2)}\right)-h<1$ or $l>E\left(\frac{ln(m+1)}{ln(2)}\right)-1-h$}\\
\\
T_1(C_\omega(m,h,l+1,i,p,q,r,\alpha,\omega),C_1(m,h,l+1,i,p,q,r,\alpha,\omega),&\text{ } \\ 
C_0(m,h,l+1,i,p,q,r,\alpha,\omega),C_2(m,h,l+1,i,p,q,r,\alpha,\omega),&\text{ elif $Ip\left(E\left(\frac{m+1-2^{l+h}}{2^{l+h}}\right)\right)$}\\ 
C_\alpha(m,h,l+1,i,p,q,r,\alpha,\omega),\Theta\left(E\left(\frac{m+1-2^{l+1+h}}{2^{l+1+h}}\right),i,p,q,r,\alpha,\omega,0\right))&\text{$~~~~~~~~~~~~~~=1$} \\
\\
Z_1(C_\omega(m,h,l+1,i,p,q,r,\alpha,\omega),C_1(m,h,l+1,i,p,q,r,\alpha,\omega),&\text{ } \\ 
C_0(m,h,l+1,i,p,q,r,\alpha,\omega),C_2(m,h,l+1,i,p,q,r,\alpha,\omega),&\text{ elif $Ip\left(E\left(\frac{m+1-2^{l+h}}{2^{l+h}}\right)\right)$}\\ 
C_\alpha(m,h,l+1,i,p,q,r,\alpha,\omega),\Theta\left(E\left(\frac{m+1-2^{l+1+h}}{2^{l+1+h}}\right),i,p,q,r,\alpha,\omega,0\right))&\text{$~~~~~~~~~~~~~~\neq 1$} \\
\end{cases}
\end{equation}
\begin{equation}
C_2(m,h,l,i,p,q,r,\alpha,\omega)=
\begin{cases}
r~~~~~~~~\text{  if $E\left(\frac{ln(m+1)}{ln(2)}\right)-h<1$ or $l>E\left(\frac{ln(m+1)}{ln(2)}\right)-1-h$}\\
\\
T_2(C_\omega(m,h,l+1,i,p,q,r,\alpha,\omega),C_1(m,h,l+1,i,p,q,r,\alpha,\omega),&\text{ } \\ 
C_0(m,h,l+1,i,p,q,r,\alpha,\omega),C_2(m,h,l+1,i,p,q,r,\alpha,\omega),&\text{ elif $Ip\left(E\left(\frac{m+1-2^{l+h}}{2^{l+h}}\right)\right)$}\\ 
C_\alpha(m,h,l+1,i,p,q,r,\alpha,\omega),\Theta\left(E\left(\frac{m+1-2^{l+1+h}}{2^{l+1+h}}\right),i,p,q,r,\alpha,\omega,0\right))&\text{$~~~~~~~~~~~~~~=1$} \\
\\
Z_2(C_\omega(m,h,l+1,i,p,q,r,\alpha,\omega),C_1(m,h,l+1,i,p,q,r,\alpha,\omega),&\text{ } \\ 
C_0(m,h,l+1,i,p,q,r,\alpha,\omega),C_2(m,h,l+1,i,p,q,r,\alpha,\omega),&\text{ elif $Ip\left(E\left(\frac{m+1-2^{l+h}}{2^{l+h}}\right)\right)$}\\ 
C_\alpha(m,h,l+1,i,p,q,r,\alpha,\omega),\Theta\left(E\left(\frac{m+1-2^{l+1+h}}{2^{l+1+h}}\right),i,p,q,r,\alpha,\omega,0\right))&\text{$~~~~~~~~~~~~~~\neq 1$} \\
\end{cases}
\end{equation}
\begin{equation}
C_\alpha(m,h,l,i,p,q,r,\alpha,\omega)=
\begin{cases}
\alpha~~~~~~~~\text{  if $E\left(\frac{ln(m+1)}{ln(2)}\right)-h<1$ or $l>E\left(\frac{ln(m+1)}{ln(2)}\right)-1-h$}\\
\\
\beta(C_\omega(m,h,l+1,i,p,q,r,\alpha,\omega),C_1(m,h,l+1,i,p,q,r,\alpha,\omega),&\text{ } \\ 
C_0(m,h,l+1,i,p,q,r,\alpha,\omega),C_2(m,h,l+1,i,p,q,r,\alpha,\omega),&\text{ elif $Ip\left(E\left(\frac{m+1-2^{l+h}}{2^{l+h}}\right)\right)$}\\ 
C_\alpha(m,h,l+1,i,p,q,r,\alpha,\omega),\Theta\left(E\left(\frac{m+1-2^{l+1+h}}{2^{l+1+h}}\right),i,p,q,r,\alpha,\omega,0\right))&\text{$~~~~~~~~~~~~~~=1$} \\
\\
\Phi(C_\omega(m,h,l+1,i,p,q,r,\alpha,\omega),C_1(m,h,l+1,i,p,q,r,\alpha,\omega),&\text{ } \\ 
C_0(m,h,l+1,i,p,q,r,\alpha,\omega),C_2(m,h,l+1,i,p,q,r,\alpha,\omega),&\text{ elif $Ip\left(E\left(\frac{m+1-2^{l+h}}{2^{l+h}}\right)\right)$}\\ 
C_\alpha(m,h,l+1,i,p,q,r,\alpha,\omega),\Theta\left(E\left(\frac{m+1-2^{l+1+h}}{2^{l+1+h}}\right),i,p,q,r,\alpha,\omega,0\right))&\text{$~~~~~~~~~~~~~~\neq 1$} \\
\end{cases}
\end{equation}
\begin{equation}
C_\omega(m,h,l,i,p,q,r,\alpha,\omega)=
\begin{cases}
\omega~~~~~~~~\text{  if $E\left(\frac{ln(m+1)}{ln(2)}\right)-h<1$ or $l>E\left(\frac{ln(m+1)}{ln(2)}\right)-1-h$}\\
\\
\mathcal{E}(C_\omega(m,h,l+1,i,p,q,r,\alpha,\omega),C_1(m,h,l+1,i,p,q,r,\alpha,\omega),&\text{ } \\ 
C_0(m,h,l+1,i,p,q,r,\alpha,\omega),C_2(m,h,l+1,i,p,q,r,\alpha,\omega),&\text{ elif $Ip\left(E\left(\frac{m+1-2^{l+h}}{2^{l+h}}\right)\right)$}\\ 
C_\alpha(m,h,l+1,i,p,q,r,\alpha,\omega),\Theta\left(E\left(\frac{m+1-2^{l+1+h}}{2^{l+1+h}}\right),i,p,q,r,\alpha,\omega,0\right))&\text{$~~~~~~~~~~~~~~=1$} \\
\\
\Upsilon(C_\omega(m,h,l+1,i,p,q,r,\alpha,\omega),C_1(m,h,l+1,i,p,q,r,\alpha,\omega),&\text{ } \\ 
C_0(m,h,l+1,i,p,q,r,\alpha,\omega),C_2(m,h,l+1,i,p,q,r,\alpha,\omega),&\text{ elif $Ip\left(E\left(\frac{m+1-2^{l+h}}{2^{l+h}}\right)\right)$}\\ 
C_\alpha(m,h,l+1,i,p,q,r,\alpha,\omega),\Theta\left(E\left(\frac{m+1-2^{l+1+h}}{2^{l+1+h}}\right),i,p,q,r,\alpha,\omega,0\right))&\text{$~~~~~~~~~~~~~~\neq 1$} \\
\end{cases}
\label{FinC}
\end{equation}
\begin{equation}
\Theta(m,i,p,q,r,\alpha,\omega,h)=
\begin{cases}
\theta(i,p,q,r,\alpha,\omega,Ip_b(h,m),\omega-1)&\text{ if $h=E\left(\frac{ln(m+1)}{ln(2)}\right)$}\\
\theta(\Theta(m,i,p,q,r,\alpha,\omega,h+1),C_1(m,h,0,i,p,q,r,\alpha,\omega),\\ C_0(m,h,0,i,p,q,r,\alpha,\omega),
C_2(m,h,0,i,p,q,r,\alpha,\omega),C_\alpha(m,h,0,i,p,q,r,\alpha,\omega),&\text{ otherwise}\\ C_\omega(m,h,0,i,p,q,r,\alpha,\omega),
Ip_b(h,m),\omega-1+h-E\left(\frac{ln(m+1)}{ln(2)}\right))
\end{cases}
\end{equation}
\begin{equation}
\rho_0(i,l,k,j,p,q,r,\alpha,\omega)=
\begin{cases}
q&\text{ if $j<2$}\\
\\
T_0(\rho_\omega(i,l+1,k,j,p,q,r,\alpha,\omega),\rho_1(i,l+1,k,j,p,q,r,\alpha,\omega),&\text{ } \\ 
\rho_0(i,l+1,k,j,p,q,r,\alpha,\omega),\rho_2(i,l+1,k,j,p,q,r,\alpha,\omega),&\text{ elif $Ip\left(E\left(\frac{k-1+2^{j-1}}{2^{l-1}}-1\right)\right)$}\\ 
\rho_\alpha(i,l+1,k,j,p,q,r,\alpha,\omega),\Theta\left(E\left(\frac{k-1+2^{j-1}-2^l}{2^l}\right),i,p,q,r,\alpha,\omega,0\right))&\text{$~~~~~~~~~~=1$ and $l<j-1$} \\
\\
Z_0(\rho_\omega(i,l+1,k,j,p,q,r,\alpha,\omega),\rho_1(i,l+1,k,j,p,q,r,\alpha,\omega),&\text{ } \\ 
\rho_0(i,l+1,k,j,p,q,r,\alpha,\omega),\rho_2(i,l+1,k,j,p,q,r,\alpha,\omega),&\text{ elif $Ip\left(E\left(\frac{k-1+2^{j-1}}{2^{l-1}}-1\right)\right)$}\\ 
\rho_\alpha(i,l+1,k,j,p,q,r,\alpha,\omega),\Theta\left(E\left(\frac{k-1+2^{j-1}-2^l}{2^l}\right),i,p,q,r,\alpha,\omega,0\right))&\text{$~~~~~~~\neq 1$ and $l<j-1$} \\
\\
T_0(\omega,p,q,r,\alpha,\Theta\left(E\left(\frac{k-1+2^{j-1}-2^l}{2^l}\right),i,p,q,r,\alpha,\omega,0\right))&\text{ elif $Ip\left(E\left(\frac{k-1+2^{j-1}}{2^{l-1}}-1\right)\right)$}\\
&\text{$~~~~~~~=1$ and $j-1\leq l$}\\
\\
Z_0(\omega,p,q,r,\alpha,\Theta\left(E\left(\frac{k-1+2^{j-1}-2^l}{2^l}\right),i,p,q,r,\alpha,\omega,0\right))&\text{ elif $Ip\left(E\left(\frac{k-1+2^{j-1}}{2^{l-1}}-1\right)\right)$}\\
&\text{$~~~~~~~=1$ and $j-1\leq l$}\\
\end{cases}
\end{equation}
\begin{equation}
\rho_1(i,l,k,j,p,q,r,\alpha,\omega)=
\begin{cases}
p&\text{ if $j<2$}\\
\\
T_1(\rho_\omega(i,l+1,k,j,p,q,r,\alpha,\omega),\rho_1(i,l+1,k,j,p,q,r,\alpha,\omega),&\text{ } \\ 
\rho_0(i,l+1,k,j,p,q,r,\alpha,\omega),\rho_2(i,l+1,k,j,p,q,r,\alpha,\omega),&\text{ elif $Ip\left(E\left(\frac{k-1+2^{j-1}}{2^{l-1}}-1\right)\right)$}\\ 
\rho_\alpha(i,l+1,k,j,p,q,r,\alpha,\omega),\Theta\left(E\left(\frac{k-1+2^{j-1}-2^l}{2^l}\right),i,p,q,r,\alpha,\omega,0\right))&\text{$~~~~~~~~~~=1$ and $l<j-1$} \\
\\
Z_1(\rho_\omega(i,l+1,k,j,p,q,r,\alpha,\omega),\rho_1(i,l+1,k,j,p,q,r,\alpha,\omega),&\text{ } \\ 
\rho_0(i,l+1,k,j,p,q,r,\alpha,\omega),\rho_2(i,l+1,k,j,p,q,r,\alpha,\omega),&\text{ elif $Ip\left(E\left(\frac{k-1+2^{j-1}}{2^{l-1}}-1\right)\right)$}\\ 
\rho_\alpha(i,l+1,k,j,p,q,r,\alpha,\omega),\Theta\left(E\left(\frac{k-1+2^{j-1}-2^l}{2^l}\right),i,p,q,r,\alpha,\omega,0\right))&\text{$~~~~~~~\neq 1$ and $l<j-1$} \\
\\
T_1(\omega,p,q,r,\alpha,\Theta\left(E\left(\frac{k-1+2^{j-1}-2^l}{2^l}\right),i,p,q,r,\alpha,\omega,0\right))&\text{ elif $Ip\left(E\left(\frac{k-1+2^{j-1}}{2^{l-1}}-1\right)\right)$}\\
&\text{$~~~~~~~=1$ and $j-1\leq l$}\\
\\
Z_1(\omega,p,q,r,\alpha,\Theta\left(E\left(\frac{k-1+2^{j-1}-2^l}{2^l}\right),i,p,q,r,\alpha,\omega,0\right))&\text{ elif $Ip\left(E\left(\frac{k-1+2^{j-1}}{2^{l-1}}-1\right)\right)$}\\
&\text{$~~~~~~~=1$ and $j-1\leq l$}\\
\end{cases}
\end{equation}
\begin{equation}
\rho_2(i,l,k,j,p,q,r,\alpha,\omega)=
\begin{cases}
r&\text{ if $j<2$}\\
\\
T_2(\rho_\omega(i,l+1,k,j,p,q,r,\alpha,\omega),\rho_1(i,l+1,k,j,p,q,r,\alpha,\omega),&\text{ } \\ 
\rho_0(i,l+1,k,j,p,q,r,\alpha,\omega),\rho_2(i,l+1,k,j,p,q,r,\alpha,\omega),&\text{ elif $Ip\left(E\left(\frac{k-1+2^{j-1}}{2^{l-1}}-1\right)\right)$}\\ 
\rho_\alpha(i,l+1,k,j,p,q,r,\alpha,\omega),\Theta\left(E\left(\frac{k-1+2^{j-1}-2^l}{2^l}\right),i,p,q,r,\alpha,\omega,0\right))&\text{$~~~~~~~~~~=1$ and $l<j-1$} \\
\\
Z_2(\rho_\omega(i,l+1,k,j,p,q,r,\alpha,\omega),\rho_1(i,l+1,k,j,p,q,r,\alpha,\omega),&\text{ } \\ 
\rho_0(i,l+1,k,j,p,q,r,\alpha,\omega),\rho_2(i,l+1,k,j,p,q,r,\alpha,\omega),&\text{ elif $Ip\left(E\left(\frac{k-1+2^{j-1}}{2^{l-1}}-1\right)\right)$}\\ 
\rho_\alpha(i,l+1,k,j,p,q,r,\alpha,\omega),\Theta\left(E\left(\frac{k-1+2^{j-1}-2^l}{2^l}\right),i,p,q,r,\alpha,\omega,0\right))&\text{$~~~~~~~\neq 1$ and $l<j-1$} \\
\\
T_2(\omega,p,q,r,\alpha,\Theta\left(E\left(\frac{k-1+2^{j-1}-2^l}{2^l}\right),i,p,q,r,\alpha,\omega,0\right))&\text{ elif $Ip\left(E\left(\frac{k-1+2^{j-1}}{2^{l-1}}-1\right)\right)$}\\
&\text{$~~~~~~~=1$ and $j-1\leq l$}\\
\\
Z_2(\omega,p,q,r,\alpha,\Theta\left(E\left(\frac{k-1+2^{j-1}-2^l}{2^l}\right),i,p,q,r,\alpha,\omega,0\right))&\text{ elif $Ip\left(E\left(\frac{k-1+2^{j-1}}{2^{l-1}}-1\right)\right)$}\\
&\text{$~~~~~~~=1$ and $j-1\leq l$}\\
\end{cases}
\end{equation}
\begin{equation}
\rho_\alpha(i,l,k,j,p,q,r,\alpha,\omega)=
\begin{cases}
\alpha&\text{ if $j<2$}\\
\\
\beta(\rho_\omega(i,l+1,k,j,p,q,r,\alpha,\omega),\rho_1(i,l+1,k,j,p,q,r,\alpha,\omega),&\text{ } \\ 
\rho_0(i,l+1,k,j,p,q,r,\alpha,\omega),\rho_2(i,l+1,k,j,p,q,r,\alpha,\omega),&\text{ elif $Ip\left(E\left(\frac{k-1+2^{j-1}}{2^{l-1}}-1\right)\right)$}\\ 
\rho_\alpha(i,l+1,k,j,p,q,r,\alpha,\omega),\Theta\left(E\left(\frac{k-1+2^{j-1}-2^l}{2^l}\right),i,p,q,r,\alpha,\omega,0\right))&\text{$~~~~~~~~~~=1$ and $l<j-1$} \\
\\
\Phi(\rho_\omega(i,l+1,k,j,p,q,r,\alpha,\omega),\rho_1(i,l+1,k,j,p,q,r,\alpha,\omega),&\text{ } \\ 
\rho_0(i,l+1,k,j,p,q,r,\alpha,\omega),\rho_2(i,l+1,k,j,p,q,r,\alpha,\omega),&\text{ elif $Ip\left(E\left(\frac{k-1+2^{j-1}}{2^{l-1}}-1\right)\right)$}\\ 
\rho_\alpha(i,l+1,k,j,p,q,r,\alpha,\omega),\Theta\left(E\left(\frac{k-1+2^{j-1}-2^l}{2^l}\right),i,p,q,r,\alpha,\omega,0\right))&\text{$~~~~~~~\neq 1$ and $l<j-1$} \\
\\
\beta(\omega,p,q,r,\alpha,\Theta\left(E\left(\frac{k-1+2^{j-1}-2^l}{2^l}\right),i,p,q,r,\alpha,\omega,0\right))&\text{ elif $Ip\left(E\left(\frac{k-1+2^{j-1}}{2^{l-1}}-1\right)\right)$}\\
&\text{$~~~~~~~=1$ and $j-1\leq l$}\\
\\
\Phi(\omega,p,q,r,\alpha,\Theta\left(E\left(\frac{k-1+2^{j-1}-2^l}{2^l}\right),i,p,q,r,\alpha,\omega,0\right))&\text{ elif $Ip\left(E\left(\frac{k-1+2^{j-1}}{2^{l-1}}-1\right)\right)$}\\
&\text{$~~~~~~~=1$ and $j-1\leq l$}\\
\end{cases}
\end{equation}
\begin{equation}
\rho_\omega(i,l,k,j,p,q,r,\alpha,\omega)=
\begin{cases}
\omega&\text{ if $j<2$}\\
\\
\mathcal{E}(\rho_\omega(i,l+1,k,j,p,q,r,\alpha,\omega),\rho_1(i,l+1,k,j,p,q,r,\alpha,\omega),&\text{ } \\ 
\rho_0(i,l+1,k,j,p,q,r,\alpha,\omega),\rho_2(i,l+1,k,j,p,q,r,\alpha,\omega),&\text{ elif $Ip\left(E\left(\frac{k-1+2^{j-1}}{2^{l-1}}-1\right)\right)$}\\ 
\rho_\alpha(i,l+1,k,j,p,q,r,\alpha,\omega),\Theta\left(E\left(\frac{k-1+2^{j-1}-2^l}{2^l}\right),i,p,q,r,\alpha,\omega,0\right))&\text{$~~~~~~~~~~=1$ and $l<j-1$} \\
\\
\Upsilon(\rho_\omega(i,l+1,k,j,p,q,r,\alpha,\omega),\rho_1(i,l+1,k,j,p,q,r,\alpha,\omega),&\text{ } \\ 
\rho_0(i,l+1,k,j,p,q,r,\alpha,\omega),\rho_2(i,l+1,k,j,p,q,r,\alpha,\omega),&\text{ elif $Ip\left(E\left(\frac{k-1+2^{j-1}}{2^{l-1}}-1\right)\right)$}\\ 
\rho_\alpha(i,l+1,k,j,p,q,r,\alpha,\omega),\Theta\left(E\left(\frac{k-1+2^{j-1}-2^l}{2^l}\right),i,p,q,r,\alpha,\omega,0\right))&\text{$~~~~~~~\neq 1$ and $l<j-1$} \\
\\
\mathcal{E}(\omega,p,q,r,\alpha,\Theta\left(E\left(\frac{k-1+2^{j-1}-2^l}{2^l}\right),i,p,q,r,\alpha,\omega,0\right))&\text{ elif $Ip\left(E\left(\frac{k-1+2^{j-1}}{2^{l-1}}-1\right)\right)$}\\
&\text{$~~~~~~~=1$ and $j-1\leq l$}\\
\\
\Upsilon(\omega,p,q,r,\alpha,\Theta\left(E\left(\frac{k-1+2^{j-1}-2^l}{2^l}\right),i,p,q,r,\alpha,\omega,0\right))&\text{ elif $Ip\left(E\left(\frac{k-1+2^{j-1}}{2^{l-1}}-1\right)\right)$}\\
&\text{$~~~~~~~=1$ and $j-1\leq l$}\\
\end{cases}
\end{equation}
\begin{equation}
\aleph_0(i,l,k,p,q,r,\alpha,\omega)=
\begin{cases}
T_0(\aleph_\omega(i,l+1,k,p,q,r,\alpha,\omega),\aleph_1(i,l+1,k,p,q,r,\alpha,\omega),&\text{ } \\ 
\aleph_0(i,l+1,k,p,q,r,\alpha,\omega),\aleph_2(i,l+1,k,p,q,r,\alpha,\omega),&\text{ if $Ip\left(E\left(\frac{k-1}{2^{l-1}}\right)+1\right)$}\\ 
\aleph_\alpha(i,l+1,k,p,q,r,\alpha,\omega),\Theta\left(E\left(\frac{k-1}{2^l}\right)-1+2^{\omega-l},i,p,q,r,\alpha,\omega,0\right))&\text{$~~~~~~~~~~=1$ and $l<\omega$} \\
\\
Z_0(\aleph_\omega(i,l+1,k,p,q,r,\alpha,\omega),\aleph_1(i,l+1,k,p,q,r,\alpha,\omega),&\text{ } \\ 
\aleph_0(i,l+1,k,p,q,r,\alpha,\omega),\aleph_2(i,l+1,k,p,q,r,\alpha,\omega),&\text{ if $Ip\left(E\left(\frac{k-1}{2^{l-1}}\right)+1\right)$}\\ 
\aleph_\alpha(i,l+1,k,p,q,r,\alpha,\omega),\Theta\left(E\left(\frac{k-1}{2^l}\right)-1+2^{\omega-l},i,p,q,r,\alpha,\omega,0\right))&\text{$~~~~~~~\neq 1$ and $l<\omega$} \\
\\
T_0(\omega,p,q,r,\alpha,\Theta\left(E\left(\frac{k-1}{2^l}\right)-1+2^{\omega-l},i,p,q,r,\alpha,\omega,0\right))&\text{ if $Ip\left(E\left(\frac{k-1}{2^{l-1}}\right)+1\right)$}\\
&\text{$~~~~~~~=1$ and $\omega\leq l$}\\
\\
Z_0(\omega,p,q,r,\alpha,\Theta\left(E\left(\frac{k-1}{2^l}\right)-1+2^{\omega-l},i,p,q,r,\alpha,\omega,0\right))&\text{ if $Ip\left(E\left(\frac{k-1}{2^{l-1}}\right)+1\right)$}\\
&\text{$~~~~~~~\neq 1$ and $\omega\leq l$}\\
\end{cases}
\label{Debaleph}
\end{equation}
\begin{equation}
\aleph_1(i,l,k,p,q,r,\alpha,\omega)=
\begin{cases}
T_1(\aleph_\omega(i,l+1,k,p,q,r,\alpha,\omega),\aleph_1(i,l+1,k,p,q,r,\alpha,\omega),&\text{ } \\ 
\aleph_0(i,l+1,k,p,q,r,\alpha,\omega),\aleph_2(i,l+1,k,p,q,r,\alpha,\omega),&\text{ if $Ip\left(E\left(\frac{k-1}{2^{l-1}}\right)+1\right)$}\\ 
\aleph_\alpha(i,l+1,k,p,q,r,\alpha,\omega),\Theta\left(E\left(\frac{k-1}{2^l}\right)-1+2^{\omega-l},i,p,q,r,\alpha,\omega,0\right))&\text{$~~~~~~~~~~=1$ and $l<\omega$} \\
\\
Z_1(\aleph_\omega(i,l+1,k,p,q,r,\alpha,\omega),\aleph_1(i,l+1,k,p,q,r,\alpha,\omega),&\text{ } \\ 
\aleph_0(i,l+1,k,p,q,r,\alpha,\omega),\aleph_2(i,l+1,k,p,q,r,\alpha,\omega),&\text{ if $Ip\left(E\left(\frac{k-1}{2^{l-1}}\right)+1\right)$}\\ 
\aleph_\alpha(i,l+1,k,p,q,r,\alpha,\omega),\Theta\left(E\left(\frac{k-1}{2^l}\right)-1+2^{\omega-l},i,p,q,r,\alpha,\omega,0\right))&\text{$~~~~~~~\neq 1$ and $l<\omega$} \\
\\
T_1(\omega,p,q,r,\alpha,\Theta\left(E\left(\frac{k-1}{2^l}\right)-1+2^{\omega-l},i,p,q,r,\alpha,\omega,0\right))&\text{ if $Ip\left(E\left(\frac{k-1}{2^{l-1}}\right)+1\right)$}\\
&\text{$~~~~~~~=1$ and $\omega\leq l$}\\
\\
Z_1(\omega,p,q,r,\alpha,\Theta\left(E\left(\frac{k-1}{2^l}\right)-1+2^{\omega-l},i,p,q,r,\alpha,\omega,0\right))&\text{ if $Ip\left(E\left(\frac{k-1}{2^{l-1}}\right)+1\right)$}\\
&\text{$~~~~~~~\neq 1$ and $\omega\leq l$}\\
\end{cases}
\end{equation}
\begin{equation}
\aleph_2(i,l,k,p,q,r,\alpha,\omega)=
\begin{cases}
T_2(\aleph_\omega(i,l+1,k,p,q,r,\alpha,\omega),\aleph_1(i,l+1,k,p,q,r,\alpha,\omega),&\text{ } \\ 
\aleph_0(i,l+1,k,p,q,r,\alpha,\omega),\aleph_2(i,l+1,k,p,q,r,\alpha,\omega),&\text{ if $Ip\left(E\left(\frac{k-1}{2^{l-1}}\right)+1\right)$}\\ 
\aleph_\alpha(i,l+1,k,p,q,r,\alpha,\omega),\Theta\left(E\left(\frac{k-1}{2^l}\right)-1+2^{\omega-l},i,p,q,r,\alpha,\omega,0\right))&\text{$~~~~~~~~~~=1$ and $l<\omega$} \\
\\
Z_2(\aleph_\omega(i,l+1,k,p,q,r,\alpha,\omega),\aleph_1(i,l+1,k,p,q,r,\alpha,\omega),&\text{ } \\ 
\aleph_0(i,l+1,k,p,q,r,\alpha,\omega),\aleph_2(i,l+1,k,p,q,r,\alpha,\omega),&\text{ if $Ip\left(E\left(\frac{k-1}{2^{l-1}}\right)+1\right)$}\\ 
\aleph_\alpha(i,l+1,k,p,q,r,\alpha,\omega),\Theta\left(E\left(\frac{k-1}{2^l}\right)-1+2^{\omega-l},i,p,q,r,\alpha,\omega,0\right))&\text{$~~~~~~~\neq 1$ and $l<\omega$} \\
\\
T_2(\omega,p,q,r,\alpha,\Theta\left(E\left(\frac{k-1}{2^l}\right)-1+2^{\omega-l},i,p,q,r,\alpha,\omega,0\right))&\text{ if $Ip\left(E\left(\frac{k-1}{2^{l-1}}\right)+1\right)$}\\
&\text{$~~~~~~~=1$ and $\omega\leq l$}\\
\\
Z_2(\omega,p,q,r,\alpha,\Theta\left(E\left(\frac{k-1}{2^l}\right)-1+2^{\omega-l},i,p,q,r,\alpha,\omega,0\right))&\text{ if $Ip\left(E\left(\frac{k-1}{2^{l-1}}\right)+1\right)$}\\
&\text{$~~~~~~~\neq 1$ and $\omega\leq l$}\\
\end{cases}
\end{equation}
\begin{equation}
\aleph_\alpha(i,l,k,p,q,r,\alpha,\omega)=
\begin{cases}
\beta(\aleph_\omega(i,l+1,k,p,q,r,\alpha,\omega),\aleph_1(i,l+1,k,p,q,r,\alpha,\omega),&\text{ } \\ 
\aleph_0(i,l+1,k,p,q,r,\alpha,\omega),\aleph_2(i,l+1,k,p,q,r,\alpha,\omega),&\text{ if $Ip\left(E\left(\frac{k-1}{2^{l-1}}\right)+1\right)$}\\ 
\aleph_\alpha(i,l+1,k,p,q,r,\alpha,\omega),\Theta\left(E\left(\frac{k-1}{2^l}\right)-1+2^{\omega-l},i,p,q,r,\alpha,\omega,0\right))&\text{$~~~~~~~~~~=1$ and $l<\omega$} \\
\\
\Phi(\aleph_\omega(i,l+1,k,p,q,r,\alpha,\omega),\aleph_1(i,l+1,k,p,q,r,\alpha,\omega),&\text{ } \\ 
\aleph_0(i,l+1,k,p,q,r,\alpha,\omega),\aleph_2(i,l+1,k,p,q,r,\alpha,\omega),&\text{ if $Ip\left(E\left(\frac{k-1}{2^{l-1}}\right)+1\right)$}\\ 
\aleph_\alpha(i,l+1,k,p,q,r,\alpha,\omega),\Theta\left(E\left(\frac{k-1}{2^l}\right)-1+2^{\omega-l},i,p,q,r,\alpha,\omega,0\right))&\text{$~~~~~~~\neq 1$ and $l<\omega$} \\
\\
\beta(\omega,p,q,r,\alpha,\Theta\left(E\left(\frac{k-1}{2^l}\right)-1+2^{\omega-l},i,p,q,r,\alpha,\omega,0\right))&\text{ if $Ip\left(E\left(\frac{k-1}{2^{l-1}}\right)+1\right)$}\\
&\text{$~~~~~~~=1$ and $\omega\leq l$}\\
\\
\Phi(\omega,p,q,r,\alpha,\Theta\left(E\left(\frac{k-1}{2^l}\right)-1+2^{\omega-l},i,p,q,r,\alpha,\omega,0\right))&\text{ if $Ip\left(E\left(\frac{k-1}{2^{l-1}}\right)+1\right)$}\\
&\text{$~~~~~~~\neq 1$ and $\omega\leq l$}\\
\end{cases}
\end{equation}
\begin{equation}
\aleph_\omega(i,l,k,p,q,r,\alpha,\omega)=
\begin{cases}
\mathcal{E}(\aleph_\omega(i,l+1,k,p,q,r,\alpha,\omega),\aleph_1(i,l+1,k,p,q,r,\alpha,\omega),&\text{ } \\ 
\aleph_0(i,l+1,k,p,q,r,\alpha,\omega),\aleph_2(i,l+1,k,p,q,r,\alpha,\omega),&\text{ if $Ip\left(E\left(\frac{k-1}{2^{l-1}}\right)+1\right)$}\\ 
\aleph_\alpha(i,l+1,k,p,q,r,\alpha,\omega),\Theta\left(E\left(\frac{k-1}{2^l}\right)-1+2^{\omega-l},i,p,q,r,\alpha,\omega,0\right))&\text{$~~~~~~~~~~=1$ and $l<\omega$} \\
\\
\Upsilon(\aleph_\omega(i,l+1,k,p,q,r,\alpha,\omega),\aleph_1(i,l+1,k,p,q,r,\alpha,\omega),&\text{ } \\ 
\aleph_0(i,l+1,k,p,q,r,\alpha,\omega),\aleph_2(i,l+1,k,p,q,r,\alpha,\omega),&\text{ if $Ip\left(E\left(\frac{k-1}{2^{l-1}}\right)+1\right)$}\\ 
\aleph_\alpha(i,l+1,k,p,q,r,\alpha,\omega),\Theta\left(E\left(\frac{k-1}{2^l}\right)-1+2^{\omega-l},i,p,q,r,\alpha,\omega,0\right))&\text{$~~~~~~~\neq 1$ and $l<\omega$} \\
\\
\mathcal{E}(\omega,p,q,r,\alpha,\Theta\left(E\left(\frac{k-1}{2^l}\right)-1+2^{\omega-l},i,p,q,r,\alpha,\omega,0\right))&\text{ if $Ip\left(E\left(\frac{k-1}{2^{l-1}}\right)+1\right)$}\\
&\text{$~~~~~~~=1$ and $\omega\leq l$}\\
\\
\Upsilon(\omega,p,q,r,\alpha,\Theta\left(E\left(\frac{k-1}{2^l}\right)-1+2^{\omega-l},i,p,q,r,\alpha,\omega,0\right))&\text{ if $Ip\left(E\left(\frac{k-1}{2^{l-1}}\right)+1\right)$}\\
&\text{$~~~~~~~\neq 1$ and $\omega\leq l$}\\
\end{cases}
\label{Finaleph}
\end{equation}
\newpage

\normalsize
\section{Proof of the theorem 3.3}
This part aims at proving \eqref{ucoef}:
\footnotesize
\begin{multline}
u^\alpha_{\omega,p,q,r}=\sum\limits_{i=0}^{\eta(p,q,r,\alpha,\omega,\omega-1)}\bigg\{ \bigg[\prod\limits_{j=1}^{\omega}\prod\limits_{k=1}^{2^{j-1}}\Gamma(\rho_\omega(i,1,k,j,p,q,r,\alpha,\omega),\rho_1(i,1,k,j,p,q,r,\alpha,\omega),\rho_0(i,1,k,j,p,q,r,\alpha,\omega), \\
 \rho_2(i,1,k,j,p,q,r,\alpha,\omega),\rho_\alpha(i,1,k,j,p,q,r,\alpha,\omega),\Theta(k-2+2^{j-1},i,p,q,r,\alpha,\omega,0))\bigg] \\
\bigg[\prod\limits_{k=1}^{2^\omega}u^{\aleph_\alpha(i,1,k,p,q,r,\alpha,\omega)}_{\aleph_\omega(i,1,k,p,q,r,\alpha,\omega),\aleph_1(i,1,k,p,q,r,\alpha,\omega),\aleph_0(i,1,k,p,q,r,\alpha,\omega),\aleph_2(i,1,k,p,q,r,\alpha,\omega)}\bigg]\bigg\}
\end{multline}
\normalsize
\begin{itemize}
\item First case: $p<2$ or $q<2$ or $r<2$ 
\end{itemize}
By definition of $\eta$ \eqref{eta}, 
\begin{equation}
\eta(p,q,r,\alpha,\omega,\omega-1)=0
\end{equation}
so there is only one term ($i=0$). In the first product
\begin{multline}
\prod\limits_{j=1}^{\omega}\prod\limits_{k=1}^{2^{j-1}}\Gamma(\rho_\omega(i,1,k,j,p,q,r,\alpha,\omega),\rho_1(i,1,k,j,p,q,r,\alpha,\omega),\rho_0(i,1,k,j,p,q,r,\alpha,\omega), \\
 \rho_2(i,1,k,j,p,q,r,\alpha,\omega),\rho_\alpha(i,1,k,j,p,q,r,\alpha,\omega),\Theta(k-2+2^{j-1},i,p,q,r,\alpha,\omega,0))
\end{multline}
all the terms are equal to 1. Indeed, the functions $\rho_x$ are identity functions in this case (for instance $\rho_1$ returns $p$). Then, by definition \eqref{Gamma}, $\Gamma(\omega,p,q,r,\alpha,i)=1$ and the first product is equal to one. The second product
\begin{equation}
\prod\limits_{k=1}^{2^\omega}u^{\aleph_\alpha(i,1,k,p,q,r,\alpha,\omega)}_{\aleph_\omega(i,1,k,p,q,r,\alpha,\omega),\aleph_1(i,1,k,p,q,r,\alpha,\omega),\aleph_0(i,1,k,p,q,r,\alpha,\omega),\aleph_2(i,1,k,p,q,r,\alpha,\omega)}
\end{equation}
corresponds to $u^\alpha_{\omega,p,q,r}$ because only the first term has strictly positive indices. For the sake of understanding, the next table shows the value of $\aleph_\omega(i,1,k,p,q,r,\alpha,\omega)$ for low $\omega$. The results can be transposed to the other functions $\aleph_x$.\newline
\begin{tabular}{cc}
 & $\aleph_\omega(i,1,k,p,q,r,\alpha,\omega)$ \\
$\omega=1,~k=1$& $\mathcal{E}(\omega,p,q,r,\alpha,.)=\omega$\\
$\omega=1,~k=2$& $\Upsilon(\omega,p,q,r,\alpha,.)=-1$\\
$\omega=2,~k=1$& $\mathcal{E}(\mathcal{E}(\omega,p,q,r,\alpha,.),T_1,T_0,T_2,\beta,.)=\omega$ \\
$\omega=2,~k=2$& $\Upsilon(\mathcal{E}(\omega,p,q,r,\alpha,.),T_1,T_0,T_2,\beta,.)=\Upsilon(\omega,p,q,r,\alpha,.)=-1$\\ 
$\omega=2,~k=3$& $\mathcal{E}(\Upsilon(\omega,p,q,r,\alpha,.),Z_1,Z_0,Z_2,\Phi,.)=\mathcal{E}(-1,-1,-1,-1,\alpha,.)=-1$ \\
$\omega=2,~k=4$& $\Upsilon(\Upsilon(\omega,p,q,r,\alpha,.),Z_1,Z_0,Z_2,\Phi,.)=-1$\\  
$\omega=3~...$
\end{tabular}
\newline
$\forall~\omega>0$, the first term of the second product gives $u^\alpha_{\omega,p,q,r}$ because the multiple composition is only composed of functions $\mathcal{E}$. In the others, $\Upsilon$ appears at least once in the composition. Since $\Upsilon(\omega,p,q,r,\alpha,.)=-1$ when at least one of $p,~q,~r$ is inferior to two, the other terms give 1 by the convention $u^\alpha_{-1,p,q,r}=1$.Therefore, the second product yields $u^\alpha_{\omega,p,q,r}$ and \eqref{ucoef} is correct in this case.

\begin{itemize}
\item Second case: $q=2$ and $\alpha=0$ 
\end{itemize}
By definition of $\eta$ \eqref{eta}, 
\begin{equation}
\eta(p,q,r,\alpha,\omega,\omega-1)=1
\end{equation}
The first product can be divided into two parts: $j=1$ and $j>1$. The two terms ($i=0$ and $1$) corresponding to $j=1$ and $k=1$ are equal to
\begin{equation}
\left\lbrace
\begin{array}{lc}
\Gamma(\omega,p,q,r,\alpha,0)=-\frac{p+1}{q}\\
\Gamma(\omega,p,q,r,\alpha,1)=-\frac{r+1}{q}
\end{array}\right.
\end{equation}
since $j<2$. Now, if $j>1$, necessarily
\footnotesize
\begin{multline}
\Gamma(\rho_\omega(i,1,k,j,p,q,r,\alpha,\omega),\rho_1(i,1,k,j,p,q,r,\alpha,\omega),\rho_0(i,1,k,j,p,q,r,\alpha,\omega),\rho_2(i,1,k,j,p,q,r,\alpha,\omega), \\
\rho_\alpha(i,1,k,j,p,q,r,\alpha,\omega),\Theta(k-2+2^{j-1},i,p,q,r,\alpha,\omega,0))=\Gamma(\omega',p',q',r',\alpha',i)=1
\end{multline}
\normalsize
because $q'<2$ in such a case. So the first product is equal to $-\frac{p+1}{q}$ if $i=0$ and $-\frac{r+1}{q}$ if $i=1$. \newline
The considerations of the first item about the second product can be applied in this case and only the first term of the product is different from one. According to the value of $i$, the first term yields:
\begin{equation}
\left\lbrace
\begin{array}{lc}
u^{\aleph_\alpha(0,1,0,p,q,r,\alpha,\omega)}_{\aleph_\omega(0,1,0,p,q,r,\alpha,\omega),\aleph_1(0,1,0,p,q,r,\alpha,\omega),\aleph_0(0,1,0,p,q,r,\alpha,\omega),\aleph_2(0,1,0,p,q,r,\alpha,\omega)}=u^1_{\omega,p+1,q-1,r} \\
u^{\aleph_\alpha(1,1,0,p,q,r,\alpha,\omega)}_{\aleph_\omega(1,1,0,p,q,r,\alpha,\omega),\aleph_1(1,1,0,p,q,r,\alpha,\omega),\aleph_0(1,1,0,p,q,r,\alpha,\omega),\aleph_2(1,1,0,p,q,r,\alpha,\omega)}=u^2_{\omega,p,q-1,r+1}
\end{array}\right.
\end{equation}
Then,
\begin{equation}
u^0_{\omega,p,2,r}=-\frac{p+1}{q}u^1_{\omega,p+1,q-1,r}-\frac{r+1}{q}u^2_{\omega,p,q-1,r+1}
\end{equation}
which is the expected result \eqref{DisDiv}.
\begin{itemize}
\item General case $\omega>0,~p>1,~q>1,~r>1$ and $(\alpha,q)\neq(0,2)$
\end{itemize}
The proof is made by mathematical induction on $\omega$.\newline
\textbf{Base case $\omega=1$}\\
The proposition 3.2 \eqref{step3} yields
\begin{multline}
u^\alpha_{1,p,q,r}=\sum_{i=0}^{\mathcal{F}(p,q,r,\alpha,1)}\Gamma(1,p,q,r,\alpha,i)
u^{\beta(1,p,q,r,\alpha,i)}_{\mathcal{E}(1,p,q,r,\alpha,i),T_1(1,p,q,r,\alpha,i),T_0(1,p,q,r,\alpha,i),T_2(1,p,q,r,\alpha,i)}\\
u^{\Phi(1,p,q,r,\alpha,i)}_{\Upsilon(1,p,q,r,\alpha,i),Z_1(1,p,q,r,\alpha,i),Z_0(1,p,q,r,\alpha,i),Z_2(1,p,q,r,\alpha,i)}
\label{correct}
\end{multline}
According to the equation \eqref{ucoef}, $u^\alpha_{1,p,q,r}$ is equal to
\footnotesize
\begin{multline}
\sum\limits_{i=0}^{\eta(p,q,r,\alpha,1,0)}\bigg\{ \bigg[\prod\limits_{j=1}^{1}\prod\limits_{k=1}^{2^{j-1}}\Gamma(\rho_\omega(i,1,k,j,p,q,r,\alpha,1),\rho_1(i,1,k,j,p,q,r,\alpha,1),\\ \rho_0(i,1,k,j,p,q,r,\alpha,1),\rho_2(i,1,k,j,p,q,r,\alpha,1),\rho_\alpha(i,1,k,j,p,q,r,\alpha,1),\\ \Theta(k-2+2^{j-1},i,p,q,r,\alpha,1,0))\bigg] 
\bigg[\prod\limits_{k=1}^{2}u^{\aleph_\alpha(i,1,k,p,q,r,\alpha,1)}_{\aleph_\omega(i,1,k,p,q,r,\alpha,1),\aleph_1(i,1,k,p,q,r,\alpha,1),\aleph_0(i,1,k,p,q,r,\alpha,1),\aleph_2(i,1,k,p,q,r,\alpha,1)}\bigg]\bigg\}
\label{uuu}
\end{multline}
\normalsize
In the first product, $j=1$ so the functions $\rho_x$ gives by definition $p,q,r,\alpha,\omega$. Moreover, since $k=1$, $k-2+2^{j-1}=0$ so
\begin{equation}
\Theta(k-2+2^{j-1},i,p,q,r,\alpha,1,0)=\theta(i,p,q,r,\alpha,1,0,0)=i
\end{equation}
Consequently
\begin{multline}
\bigg[\prod\limits_{j=1}^{1}\prod\limits_{k=1}^{2^{j-1}}\Gamma(\rho_\omega(i,1,k,j,p,q,r,\alpha,1),\rho_1(i,1,k,j,p,q,r,\alpha,1),\rho_0(i,1,k,j,p,q,r,\alpha,1), \\
 \rho_2(i,1,k,j,p,q,r,\alpha,1),\rho_\alpha(i,1,k,j,p,q,r,\alpha,1),\Theta(k-2+2^{j-1},i,p,q,r,\alpha,1,0))\bigg]\\
 =\Gamma(1,p,q,r,\alpha,i)
\end{multline}
In the second product, there are two terms. The functions from (A.68) to (A.72) require the computation of $I_p\left(E\left(\frac{k-1}{2^{l-1}}\right)+1\right)$ only for $l=1$. Then,
\begin{equation}
I_p(E(k-1)+1)=I_p(k)=
\left\lbrace
\begin{array}{lc}
2\text{ if $k=0$}\\
1\text{ if $k=1$}
\end{array}\right.
\end{equation}
and
\begin{equation}
\left\lbrace
\begin{array}{lc}
\aleph_1(i,1,0,p,q,r,\alpha,1)=Z_1(1,p,q,r,\alpha,\Theta(0,i,p,q,r,\alpha,1,0))=Z_1(1,p,q,r,\alpha,i) \\
\aleph_1(i,1,k,p,q,r,\alpha,1)=T_1(1,p,q,r,\alpha,\Theta(0,i,p,q,r,\alpha,1,0))=T_1(1,p,q,r,\alpha,i)
\end{array}\right.
\end{equation}
The other functions $\aleph_x$ provide similar results. Then, the second product yields
\begin{multline}
\bigg[\prod\limits_{k=1}^{2}u^{\aleph_\alpha(i,1,k,p,q,r,\alpha,1)}_{\aleph_\omega(i,1,k,p,q,r,\alpha,1),\aleph_1(i,1,k,p,q,r,\alpha,1),\aleph_0(i,1,k,p,q,r,\alpha,1),\aleph_2(i,1,k,p,q,r,\alpha,1)}\bigg]=\\
u^{\beta(1,p,q,r,\alpha,i)}_{\mathcal{E}(1,p,q,r,\alpha,i),T_1(1,p,q,r,\alpha,i),T_0(1,p,q,r,\alpha,i),T_2(1,p,q,r,\alpha,i)}
u^{\Phi(1,p,q,r,\alpha,i)}_{\Upsilon(1,p,q,r,\alpha,i),Z_1(1,p,q,r,\alpha,i),Z_0(1,p,q,r,\alpha,i),Z_2(1,p,q,r,\alpha,i)}
\end{multline}
Therefore, the equation \eqref{uuu} becomes
\begin{multline}
\sum\limits_{i=0}^{\eta(p,q,r,\alpha,1,0)}\Gamma(1,p,q,r,\alpha,i)
u^{\beta(1,p,q,r,\alpha,i)}_{\mathcal{E}(1,p,q,r,\alpha,i),T_1(1,p,q,r,\alpha,i),T_0(1,p,q,r,\alpha,i),T_2(1,p,q,r,\alpha,i)}\\
u^{\Phi(1,p,q,r,\alpha,i)}_{\Upsilon(1,p,q,r,\alpha,i),Z_1(1,p,q,r,\alpha,i),Z_0(1,p,q,r,\alpha,i),Z_2(1,p,q,r,\alpha,i)}
\label{Here}
\end{multline}
By definition of $\eta$, $\eta(p,q,r,\alpha,1,0)=\mathcal{F}(p,q,r,\alpha,1)$ and \eqref{Here} is then identical to \eqref{correct}. The formula \eqref{ucoef} holds for $\omega=1$.\\
\textbf{Induction step}\\
Now, the relation \eqref{ucoef} is assumed to hold for the integers $\omega_0$ such as $\omega_0<\omega$. Is \eqref{ucoef} still true for $\omega$ ? \\
The proposition 3.2 \eqref{step3} yields
\begin{multline}
u^\alpha_{\omega,p,q,r}=\sum_{i=0}^{\mathcal{F}(p,q,r,\alpha,\omega)}\Gamma(\omega,p,q,r,\alpha,i)
u^{\beta(\omega,p,q,r,\alpha,i)}_{\mathcal{E}(\omega,p,q,r,\alpha,i),T_1(\omega,p,q,r,\alpha,i),T_0(\omega,p,q,r,\alpha,i),T_2(\omega,p,q,r,\alpha,i)}\\
u^{\Phi(\omega,p,q,r,\alpha,i)}_{\Upsilon(\omega,p,q,r,\alpha,i),Z_1(\omega,p,q,r,\alpha,i),Z_0(\omega,p,q,r,\alpha,i),Z_2(\omega,p,q,r,\alpha,i)}
\label{w_1}
\end{multline}
In the case $p,q,r>1$ and $(\alpha,q)\neq (0,2)$, $\mathcal{E}(\omega,p,q,r,\alpha,i)<\omega$ and $\Upsilon(\omega,p,q,r,\alpha,i)<\omega$ so the induction hypothesis can be applied to \eqref{w_1}. 
\newline
Before proceeding, additional information about the construction of the solution \eqref{ucoef} must be provided. In the equation \eqref{uu}, $\widetilde{\omega}$ is equal to $\omega$ to be in agreement with the solution \eqref{ucoef}.
\footnotesize
\begin{multline}
u^\alpha_{\omega,p,q,r}=\sum\limits_{i=0}^{\eta(p,q,r,\alpha,\omega,\widetilde{\omega}-1)}\bigg\{ \bigg[\prod\limits_{j=1}^{\widetilde{\omega}}\prod\limits_{k=1}^{2^{j-1}}\Gamma(\rho_\omega(i,1,k,j,p,q,r,\alpha,\omega),\rho_1(i,1,k,j,p,q,r,\alpha,\omega),\rho_0(i,1,k,j,p,q,r,\alpha,\omega), \\
 \rho_2(i,1,k,j,p,q,r,\alpha,\omega),\rho_\alpha(i,1,k,j,p,q,r,\alpha,\omega),\Theta(k-2+2^{j-1},i,p,q,r,\alpha,\omega,0))\bigg] \\
\bigg[\prod\limits_{k=1}^{2^{\widetilde{\omega}}}u^{\aleph_\alpha(i,1,k,p,q,r,\alpha,\omega)}_{\aleph_\omega(i,1,k,p,q,r,\alpha,\omega),\aleph_1(i,1,k,p,q,r,\alpha,\omega),\aleph_0(i,1,k,p,q,r,\alpha,\omega),\aleph_2(i,1,k,p,q,r,\alpha,\omega)}\bigg]\bigg\}
\label{uu}
\end{multline}
\normalsize
However, the solution has been built so that $\widetilde{\omega}$ is not every time defined by $\widetilde{\omega}=\omega$. Otherwise, the calculation of $u^{\beta(\omega,p,q,r,\alpha,i)}_{\mathcal{E}(\omega,p,q,r,\alpha,i),T_1(\omega,p,q,r,\alpha,i),T_0(\omega,p,q,r,\alpha,i),T_2(\omega,p,q,r,\alpha,i)}$ would require $\widetilde{\omega}=\mathcal{E}(\omega,p,q,r,\alpha,i)$ which would complicate a lot the expression of the analytical solution \eqref{ucoef}. Instead, $\widetilde{\omega}$ is equal to $\omega-1$ in this case even if this formulation adds some zero terms since $\mathcal{E}(\omega,p,q,r,\alpha,i)$ can be inferior to $\omega-1$. So, the definition of $\widetilde{\omega}$ is $\displaystyle \max_{i}~f(\omega,p,q,r,\alpha,i)$ where $f$ is defined by $u^\alpha_{f(\omega,p,q,r,\alpha,i),p,q,r}$. For $u^\alpha_{\omega,p,q,r}$, $f=id$ and $\widetilde{\omega}=\omega$ as expected in \eqref{ucoef}. \newline
Similarly, the functions $\aleph$ (from \eqref{Debaleph} to \eqref{Finaleph}) and $\Theta$ are affected by the definition of $\widetilde{\omega}$. For instance, the function $\aleph_0$ is defined by
\footnotesize
\begin{equation}
\aleph_0(i,l,k,p,q,r,\alpha,\omega)=
\begin{cases}
T_0(\aleph_\omega(i,l+1,k,p,q,r,\alpha,\omega),\aleph_1(i,l+1,k,p,q,r,\alpha,\omega),&\text{ } \\ 
\aleph_0(i,l+1,k,p,q,r,\alpha,\omega),\aleph_2(i,l+1,k,p,q,r,\alpha,\omega),&\text{ if $Ip\left(E\left(\frac{k-1}{2^{l-1}}\right)+1\right)$}\\ 
\aleph_\alpha(i,l+1,k,p,q,r,\alpha,\omega),\Theta\left(E\left(\frac{k-1}{2^l}\right)-1+2^{\widetilde{\omega}-l},i,p,q,r,\alpha,\omega,0\right))&\text{$~~~~~~~=1$ and $l<\widetilde{\omega}$} \\
\\
Z_0(\aleph_\omega(i,l+1,k,p,q,r,\alpha,\omega),\aleph_1(i,l+1,k,p,q,r,\alpha,\omega),&\text{ } \\ 
\aleph_0(i,l+1,k,p,q,r,\alpha,\omega),\aleph_2(i,l+1,k,p,q,r,\alpha,\omega),&\text{ if $Ip\left(E\left(\frac{k-1}{2^{l-1}}\right)+1\right)$}\\ 
\aleph_\alpha(i,l+1,k,p,q,r,\alpha,\omega),\Theta\left(E\left(\frac{k-1}{2^l}\right)-1+2^{\widetilde{\omega}-l},i,p,q,r,\alpha,\omega,0\right))&\text{$~~~~~~~\neq 1$ and $l<\widetilde{\omega}$} \\
\\
T_0(\omega,p,q,r,\alpha,\Theta\left(E\left(\frac{k-1}{2^l}\right)-1+2^{\widetilde{\omega}-l},i,p,q,r,\alpha,\omega,0\right))&\text{ if $Ip\left(E\left(\frac{k-1}{2^{l-1}}\right)+1\right)$}\\
&\text{$~~~~~~~=1$ and $\widetilde{\omega}\leq l$}\\
\\
Z_0(\omega,p,q,r,\alpha,\Theta\left(E\left(\frac{k-1}{2^l}\right)-1+2^{\widetilde{\omega}-l},i,p,q,r,\alpha,\omega,0\right))&\text{ if $Ip\left(E\left(\frac{k-1}{2^{l-1}}\right)+1\right)$}\\
&\text{$~~~~~~~\neq 1$ and $\widetilde{\omega}\leq l$}\\
\end{cases}
\end{equation} 
\normalsize
The functions $\aleph_1$, $\aleph_2$, $\aleph_\alpha$ and $\aleph_\omega$ are similarly defined. Moreover, $\Theta$ is defined by
\scriptsize
\begin{equation}
\Theta(m,i,p,q,r,\alpha,\omega,h)=
\begin{cases}
\theta(i,p,q,r,\alpha,\omega,Ip_b(h,m),\widetilde{\omega}-1)&\text{ if $h=E\left(\frac{ln(m+1)}{ln(2)}\right)$}\\
\theta(\Theta(m,i,p,q,r,\alpha,\omega,h+1),C_1(m,h,0,i,p,q,r,\alpha,\omega),\\ C_0(m,h,0,i,p,q,r,\alpha,\omega),
C_2(m,h,0,i,p,q,r,\alpha,\omega),C_\alpha(m,h,0,i,p,q,r,\alpha,\omega),&\text{ otherwise}\\ C_\omega(m,h,0,i,p,q,r,\alpha,\omega),
Ip_b(h,m),\widetilde{\omega}-1+h-E\left(\frac{ln(m+1)}{ln(2)}\right))
\end{cases}
\end{equation}
\normalsize
Note that this discussion does not regard the calculation of $u^\alpha_{\omega,p,q,r}$.
With these considerations, the introduction of the induction hypothesis in \eqref{w_1} yields 
\tiny
\begin{multline}
u^\alpha_{\omega,p,q,r}=\sum_{i=0}^{\mathcal{F}(p,q,r,\alpha,\omega)}\Gamma(\omega,p,q,r,\alpha,i)\bigg[\sum\limits_{i_1=0}^{\eta(T_1,T_0,T_2,\beta,\mathcal{E},\omega-2)}
\bigg\{ \bigg[\prod\limits_{j_1=1}^{\omega-1}\prod\limits_{k_1=1}^{2^{j_1-1}}\Gamma(\rho_\omega(i_1,1,k_1,j_1,T_1,T_0,T_2,\beta,\mathcal{E}),\rho_1(i_1,1,k_1,j_1,T_1,T_0,T_2,\beta,\mathcal{E}),\\
\rho_0(i_1,1,k_1,j_1,T_1,T_0,T_2,\beta,\mathcal{E}),\rho_2(i_1,1,k_1,j_1,T_1,T_0,T_2,\beta,\mathcal{E}),\rho_\alpha(i_1,1,k_1,j_1,T_1,T_0,T_2,\beta,\mathcal{E}), 
\Theta(k_1-2+2^{j_1-1},i_1,T_1,T_0,T_2,\beta,\mathcal{E},0))\bigg] \\
\bigg[\prod\limits_{k'_1=1}^{2^{\omega-1}}u^{\aleph_\alpha(i_1,1,k'_1,T_1,T_0,T_2,\beta,\mathcal{E})}_{\aleph_\omega(i_1,1,k'_1,T_1,T_0,T_2,\beta,\mathcal{E}),
\aleph_1(i_1,1,k'_1,T_1,T_0,T_2,\beta,\mathcal{E}),\aleph_0(i_1,1,k'_1,T_1,T_0,T_2,\beta,\mathcal{E}),\aleph_2(i_1,1,k'_1,T_1,T_0,T_2,\beta,\mathcal{E})}\bigg]\bigg\}\bigg] \\
\bigg[\sum\limits_{i_2=0}^{\eta(Z_1,Z_0,Z_2,\Phi,\Upsilon,\omega-2)}\bigg\{ \bigg[\prod\limits_{j_2=1}^{\omega-1}\prod\limits_{k_2=1}^{2^{j_2-1}}
\Gamma(\rho_\omega(i_2,1,k_2,j_2,Z_1,Z_0,Z_2,\Phi,\Upsilon),\rho_1(i_2,1,k_2,j_2,Z_1,Z_0,Z_2,\Phi,\Upsilon),\rho_0(i_2,1,k_2,j_2,Z_1,Z_0,Z_2,\Phi,\Upsilon), \\
 \rho_2(i_2,1,k_2,j_2,Z_1,Z_0,Z_2,\Phi,\Upsilon),\rho_\alpha(i_2,1,k_2,j_2,Z_1,Z_0,Z_2,\Phi,\Upsilon),\Theta(k_2-2+2^{j_2-1},i_2,Z_1,Z_0,Z_2,\Phi,\Upsilon,0))\bigg] \\
\bigg[\prod\limits_{k'_2=1}^{2^{\omega-1}}u^{\aleph_\alpha(i_2,1,k'_2,Z_1,Z_0,Z_2,\Phi,\Upsilon)}_{\aleph_\omega(i_2,1,k'_2,Z_1,Z_0,Z_2,\Phi,\Upsilon),
\aleph_1(i_2,1,k'_2,Z_1,Z_0,Z_2,\Phi,\Upsilon),\aleph_0(i_2,1,k'_2,Z_1,Z_0,Z_2,\Phi,\Upsilon),\aleph_2(i_2,1,k'_2,Z_1,Z_0,Z_2,\Phi,\Upsilon)}\bigg]\bigg\}\bigg]
\end{multline}
\normalsize
For the sake of shortness, the arguments of the functions $T_1,~T_0,~T_2,~\beta,~\mathcal{E},~Z_1,~Z_0,~Z_2,~\Phi~\text{and }\Upsilon$ are not written (for instance $T_1=T_1(\omega,p,q,r,\alpha,i)$).\newline
The next step is to merge the three sums into one. For the sake of simplification, the following notations are introduced
\begin{equation}
\left\lbrace
\begin{array}{lc}
L=\mathcal{F}(p,q,r,\alpha,\omega) \\
M(i)=\eta(T_1,T_0,T_2,\beta,\mathcal{E},\omega-2) \\
N(i)=\eta(Z_1,Z_0,Z_2,\Phi,\Upsilon,\omega-2)
\end{array}\right.
\end{equation}
By definition
\begin{equation}
\begin{array}{lc}
M(i)=\widetilde{\eta_1}(p,q,r,\alpha,\omega,i,\omega-2)\\
N(i)=\widetilde{\eta_2}(p,q,r,\alpha,\omega,i,\omega-2)
\end{array}
\end{equation}
The merger is then given by 
\begin{equation}
\sum\limits_{i=0}^{L}\sum\limits_{i_1=0}^{M(i)}\sum\limits_{i_2=0}^{N(i)} \rightarrow \sum\limits_{n=0}^{P}
\end{equation}
So,
\begin{equation}
P+1=\sum\limits_{i=0}^{L}(M(i)+1)(N(i)+1)
\end{equation}
By definition of $\eta$ \eqref{eta},
\begin{equation}
P=\eta(p,q,r,\alpha,\omega,\omega-1)
\end{equation}
Moreover, $i,~i_1$ and $i_2$ must be expressed as functions of $n$. Schematically, 
\begin{equation}
\begin{array}{lccc}
i=0&i_1=0&i_2=0&n=0 \\
0&0&1&1\\
.&.&.&.\\
.&.&.&.\\
0&0&N(0)&N(0)\\
0&1&0&N(0)+1\\
.&.&.&.\\
.&.&.&.\\
0&M(0)&N(0)&(M(0)+1)(N(0)+1)-1\\
1&0&0&(M(0)+1)(N(0)+1)\\
...&...&...&...
\end{array}
\end{equation}
Mathematically, $i$ is computed by:
\begin{equation}
i=i\text{ such as }n\in\left[\sum\limits_{k=0}^{i-1}(M(k)+1)(N(k)+1);\sum\limits_{k=0}^{i}(M(k)+1)(N(k)+1)\right[
\end{equation}
$i_1$ is the quotient of an Euclidean division:
\begin{equation}
i_1=E\left(\frac{n-\sum\limits_{m=0}^{i-1}(M(m)+1)(N(m)+1)}{N(i)+1}\right)
\end{equation}
$i_2$ is the remainder of an Euclidean division \eqref{R}:
\begin{equation}
i_2=\mathcal{R}\left(n-\sum\limits_{m=0}^{i-1}(M(m)+1)(N(m)+1),N(i)+1\right)
\end{equation}
Then, $i,~i_1$ and $i_2$ correspond to
\begin{equation}
\begin{array}{lc}
i=\theta(n,p,q,r,\alpha,\omega,0,\omega-1) \\
i_1=\theta(n,p,q,r,\alpha,\omega,1,\omega-1) \\
i_2=\theta(n,p,q,r,\alpha,\omega,2,\omega-1) \\
\end{array}
\end{equation}
In the end, the merge into one sum leads to
\scriptsize
\begin{multline}
u^\alpha_{\omega,p,q,r}=\sum_{n=0}^{\eta(p,q,r,\alpha,\omega,\omega-1)}\Gamma(\omega,p,q,r,\alpha,i)
\bigg\{ \bigg[\prod\limits_{j_1=1}^{\omega-1}\prod\limits_{k_1=1}^{2^{j_1-1}}\Gamma(\rho_\omega(i_1,1,k_1,j_1,T_1,T_0,T_2,\beta,\mathcal{E}),\rho_1(i_1,1,k_1,j_1,T_1,T_0,T_2,\beta,\mathcal{E}),\\
\rho_0(i_1,1,k_1,j_1,T_1,T_0,T_2,\beta,\mathcal{E}),\rho_2(i_1,1,k_1,j_1,T_1,T_0,T_2,\beta,\mathcal{E}),\rho_\alpha(i_1,1,k_1,j_1,T_1,T_0,T_2,\beta,\mathcal{E}), 
\Theta(k_1-2+2^{j_1-1},i_1,T_1,T_0,T_2,\beta,\mathcal{E},0))\bigg] \\
\bigg[\prod\limits_{k'_1=1}^{2^{\omega-1}}u^{\aleph_\alpha(i_1,1,k'_1,T_1,T_0,T_2,\beta,\mathcal{E})}_{\aleph_\omega(i_1,1,k'_1,T_1,T_0,T_2,\beta,\mathcal{E}),
\aleph_1(i_1,1,k'_1,T_1,T_0,T_2,\beta,\mathcal{E}),\aleph_0(i_1,1,k'_1,T_1,T_0,T_2,\beta,\mathcal{E}),\aleph_2(i_1,1,k'_1,T_1,T_0,T_2,\beta,\mathcal{E})}\bigg]\bigg\} \\
\bigg\{ \bigg[\prod\limits_{j_2=1}^{\omega-1}\prod\limits_{k_2=1}^{2^{j_2-1}}
\Gamma(\rho_\omega(i_2,1,k_2,j_2,Z_1,Z_0,Z_2,\Phi,\Upsilon),\rho_1(i_2,1,k_2,j_2,Z_1,Z_0,Z_2,\Phi,\Upsilon),\rho_0(i_2,1,k_2,j_2,Z_1,Z_0,Z_2,\Phi,\Upsilon), \\
 \rho_2(i_2,1,k_2,j_2,Z_1,Z_0,Z_2,\Phi,\Upsilon),\rho_\alpha(i_2,1,k_2,j_2,Z_1,Z_0,Z_2,\Phi,\Upsilon),\Theta(k_2-2+2^{j_2-1},i_2,Z_1,Z_0,Z_2,\Phi,\Upsilon,0))\bigg] \\
\bigg[\prod\limits_{k'_2=1}^{2^{\omega-1}}u^{\aleph_\alpha(i_2,1,k'_2,Z_1,Z_0,Z_2,\Phi,\Upsilon)}_{\aleph_\omega(i_2,1,k'_2,Z_1,Z_0,Z_2,\Phi,\Upsilon),
\aleph_1(i_2,1,k'_2,Z_1,Z_0,Z_2,\Phi,\Upsilon),\aleph_0(i_2,1,k'_2,Z_1,Z_0,Z_2,\Phi,\Upsilon),\aleph_2(i_2,1,k'_2,Z_1,Z_0,Z_2,\Phi,\Upsilon)}\bigg]\bigg\}
\end{multline}
\normalsize
Two equalities remain to prove. First,
\begin{multline}
\bigg[\prod\limits_{k'_1=1}^{2^{\omega-1}}u^{\aleph_\alpha(i_1,1,k'_1,T_1,T_0,T_2,\beta,\mathcal{E})}_{\aleph_\omega(i_1,1,k'_1,T_1,T_0,T_2,\beta,\mathcal{E}),
\aleph_1(i_1,1,k'_1,T_1,T_0,T_2,\beta,\mathcal{E}),\aleph_0(i_1,1,k'_1,T_1,T_0,T_2,\beta,\mathcal{E}),\aleph_2(i_1,1,k'_1,T_1,T_0,T_2,\beta,\mathcal{E})}\bigg] \\
\bigg[\prod\limits_{k'_2=1}^{2^{\omega-1}}u^{\aleph_\alpha(i_2,1,k'_2,Z_1,Z_0,Z_2,\Phi,\Upsilon)}_{\aleph_\omega(i_2,1,k'_2,Z_1,Z_0,Z_2,\Phi,\Upsilon),
\aleph_1(i_2,1,k'_2,Z_1,Z_0,Z_2,\Phi,\Upsilon),\aleph_0(i_2,1,k'_2,Z_1,Z_0,Z_2,\Phi,\Upsilon),\aleph_2(i_2,1,k'_2,Z_1,Z_0,Z_2,\Phi,\Upsilon)}\bigg] \\
\stackrel{?}{=} \bigg[\prod\limits_{k=1}^{2^\omega}u^{\aleph_\alpha(n,1,k,p,q,r,\alpha,\omega)}_{\aleph_\omega(n,1,k,p,q,r,\alpha,\omega),\aleph_1(n,1,k,p,q,r,\alpha,\omega),\aleph_0(n,1,k,p,q,r,\alpha,\omega),\aleph_2(n,1,k,p,q,r,\alpha,\omega)}\bigg]
\label{FirstEquality}
\end{multline}
The left member corresponds to a single product with the same size as the right member ($2^{\omega-1}+2^{\omega-1}=2\times 2^{\omega-1}=2^\omega$).\newline
To prove the previous relation \eqref{FirstEquality}, an essential relation about $\Theta$ is needed. From the definitions of $\Theta$ and the functions $C_x$ (from \eqref{DebC} to \eqref{FinC}), formal calculations provide two relations
\scriptsize
\begin{equation}
\Theta\left(E\left(\frac{k-1}{2^l}\right)-1+2^{y-l},n,p,q,r,\alpha,\omega,0\right)=
\left\lbrace
\begin{array}{lc}
\Theta\left(E\left(\frac{k-1}{2^l}\right)-1+2^{y-l-1},i_1,T_1,T_0,T_2,\beta,\mathcal{E},0\right)&\text{ if $k\leq 2^{y-1}$}\\
\Theta\left(E\left(\frac{k-2^{y-1}-1}{2^l}\right)-1+2^{y-l-1},i_2,Z_1,Z_0,Z_2,\Phi,\Upsilon,0\right)&\text{ if $2^{y-1}<k$}\\
\end{array}\right.
\label{Theta_rel}
\end{equation}
\begin{multline}
C_x\left(E\left(\frac{k-1}{2^l}\right)-1+2^{y-l},E\left(\frac{ln\left(E\left(\frac{k-1}{2^l}\right)+2^{y-l}\right)}{ln(2)}\right)-2,0,n,p,q,r,\alpha,\omega\right)=\\
\left\lbrace
\begin{array}{lc}
C_x\left(E\left(\frac{k-1}{2^l}\right)-1+2^{y-1-l},E\left(\frac{ln\left(E\left(\frac{k-1}{2^l}\right)+2^{y-1-l}\right)}{ln(2)}\right)-1,0,i_1,T_1,T_0,T_2,\beta,\mathcal{E},0\right)&\text{ if $k\leq 2^{y-1}$}\\
C_x\left(E\left(\frac{k-2^{y-1}-1}{2^l}\right)-1+2^{y-1-l},E\left(\frac{ln\left(E\left(\frac{k-2^{y-1}-1}{2^l}\right)+2^{y-1-l}\right)}{ln(2)}\right)-1,0,i_2,Z_1,Z_0,Z_2,\Phi,\Upsilon,0\right)&\text{ if $2^{y-1}<k$}\\
\end{array}\right.
\end{multline}
\normalsize
while $l<y$ and $k\leq 2^y$.\newline
With $y=\widetilde{\omega}$, \eqref{Theta_rel} implies the following relation
\footnotesize
\begin{equation}
\aleph_x\left(n,1,k,p,q,r,\alpha,\omega\right)=
\left\lbrace
\begin{array}{lc}
\aleph_x\left(i_1,1,k_1,T_1,T_0,T_2,\beta,\mathcal{E}\right)&\text{ if $k\leq 2^{\omega-1}$}\\
\aleph_x\left(i_2,1,k_2,Z_1,Z_0,Z_2,\Phi,\Upsilon,0\right)&\text{ if $2^{\omega-1}<k$}\\
\end{array}\right.
\label{Sol1}
\end{equation}
\normalsize
with $k_1=k$ and $k_2=k-2^{\omega-1}$. The proof of \eqref{Sol1} is only presented for the case $x=\omega$ because the others are similarly proven:
\begin{multline}
\aleph_\omega(n,1,k,p,q,r,\alpha,\omega)=\stackrel{\mathcal{E}}{\Upsilon}\bigg(\stackrel{\mathcal{E}}{\Upsilon}\bigg(...\bigg(\stackrel{\mathcal{E}}{\Upsilon}\bigg(\stackrel{\mathcal{E}}{\Upsilon}\bigg(\omega,\Theta\bigg(E\left(\frac{k-1}{2^{\omega}}\right),n,p,q,r,\alpha,\omega,0\bigg)\bigg),\\ \Theta\bigg(E\left(\frac{k-1}{2^{\omega-1}}\right)+1,n,p,q,r,\alpha,\omega,0\bigg)\bigg),...\bigg), \Theta\bigg(E\left(\frac{k-1}{4}\right)+2^{\omega-2}-1,n,p,q,r,\alpha,\omega,0\bigg)\bigg),\\
\Theta\bigg(E\left(\frac{k-1}{2}\right)+2^{\omega-1}-1,n,p,q,r,\alpha,\omega,0\bigg)\bigg)
\label{Int1}
\end{multline}
$\mathcal{E}(p,q,r,\alpha,\omega,i)$ has been abbreviated as $\mathcal{E}(\omega,i)$ to alleviate the notations. The notation $\stackrel{\mathcal{E}}{\Upsilon}$ means:
\begin{equation}
\stackrel{\mathcal{E}}{\Upsilon}=
\left\lbrace\begin{array}{lc}
\mathcal{E} & \text{ if $Ip\left(E\left(\frac{k-1}{2^{l-1}}\right)+1\right)=1$} \\
\Upsilon & \text{ if $Ip\left(E\left(\frac{k-1}{2^{l-1}}\right)+1\right)\neq 1$} \\
\end{array}\right.
\end{equation}
Since $k\leq 2^{\omega}$,
\begin{equation}
\begin{aligned}
E\left(\frac{k-1}{2^{\omega}}\right)=0\Rightarrow\Theta\bigg(E\left(\frac{k-1}{2^{\omega}}\right),n,p,q,r,\alpha,\omega,0\bigg)&=\Theta(0,n,p,q,r,\alpha,\omega,0)\\
&=\theta(n,p,q,r,\alpha,\omega,0,\omega-1)\\
&=i
\end{aligned}
\end{equation}
So, the relation \eqref{Int1} becomes
\small
\begin{multline}
\aleph_\omega(n,1,k,p,q,r,\alpha,\omega)=\stackrel{\mathcal{E}}{\Upsilon}\bigg(\stackrel{\mathcal{E}}{\Upsilon}\bigg(...\bigg(\stackrel{\mathcal{E}}{\Upsilon}\bigg(\stackrel{\mathcal{E}}{\Upsilon}(\omega,i), \Theta\bigg(E\left(\frac{k-1}{2^{\omega-1}}\right)+1,n,p,q,r,\alpha,\omega,0\bigg)\bigg),...\bigg),\\ \Theta\bigg(E\left(\frac{k-1}{4}\right)+2^{\omega-2}-1,n,p,q,r,\alpha,\omega,0\bigg)\bigg),\Theta\bigg(E\left(\frac{k-1}{2}\right)+2^{\omega-1}-1,n,p,q,r,\alpha,\omega,0\bigg)\bigg)
\end{multline}
\normalsize
The other part of the equality \eqref{Sol1} is similarly expanded with the notations $k_{1,2}$ and $i_{1,2}$ for $k_1$ or $k_2$ and $i_1$ or $i_2$ depending on the case:
\footnotesize
\begin{multline}
\aleph_\omega\left(i_{1,2},1,k_{1,2},\stackrel{T_1}{Z_1},\stackrel{T_0}{Z_0},\stackrel{T_2}{Z_2},\stackrel{\beta}{\Phi},\stackrel{\mathcal{E}}{\Upsilon}\right)=\stackrel{\mathcal{E}}{\Upsilon}\bigg(\stackrel{\mathcal{E}}{\Upsilon}\bigg(...\bigg(\stackrel{\mathcal{E}}{\Upsilon}\bigg(\stackrel{\mathcal{E}}{\Upsilon}(\omega,i),\Theta\bigg(E\left(\frac{k_{1,2}-1}{2^{\omega-1}}\right),i_{1,2},\stackrel{T_1}{Z_1},\stackrel{T_0}{Z_0},\stackrel{T_2}{Z_2},\stackrel{\beta}{\Phi},\stackrel{\mathcal{E}}{\Upsilon},0\bigg)\bigg),\\ \Theta\bigg(E\left(\frac{k_{1,2}-1}{2^{\omega-2}}\right)+1,i_{1,2},\stackrel{T_1}{Z_1},\stackrel{T_0}{Z_0},\stackrel{T_2}{Z_2},\stackrel{\beta}{\Phi},\stackrel{\mathcal{E}}{\Upsilon},0\bigg)\bigg),...\bigg),\Theta\bigg(E\left(\frac{k_{1,2}-1}{4}\right)+2^{\omega-3}-1,i_{1,2},\stackrel{T_1}{Z_1},\stackrel{T_0}{Z_0},\stackrel{T_2}{Z_2},\stackrel{\beta}{\Phi},\stackrel{\mathcal{E}}{\Upsilon},0\bigg)\bigg),\\ \Theta\bigg(E\left(\frac{k_{1,2}-1}{2}\right)+2^{\omega-2}-1,i_{1,2},\stackrel{T_1}{Z_1},\stackrel{T_0}{Z_0},\stackrel{T_2}{Z_2},\stackrel{\beta}{\Phi},\stackrel{\mathcal{E}}{\Upsilon},0\bigg)\bigg)
\end{multline}
\normalsize
\eqref{Theta_rel} implies
\footnotesize
\begin{equation}
\left\lbrace
\begin{array}{lc}
\Theta(E\left(\frac{k-1}{2^{\omega-1}}\right)+1,n,p,q,r,\alpha,\omega,0)=\Theta\bigg(E\left(\frac{k_{1,2}-1}{2^{\omega-1}}\right),i_{1,2},\stackrel{T_1}{Z_1},\stackrel{T_0}{Z_0},\stackrel{T_2}{Z_2},\stackrel{\beta}{\Phi},\stackrel{\mathcal{E}}{\Upsilon},0\bigg)&\text{ for $l=\omega-1$}\\
\Theta(E\left(\frac{k-1}{2^{\omega-2}}\right)+3,n,p,q,r,\alpha,\omega,0)=\Theta\bigg(E\left(\frac{k_{1,2}-1}{2^{\omega-2}}\right)+1,i_{1,2},\stackrel{T_1}{Z_1},\stackrel{T_0}{Z_0},\stackrel{T_2}{Z_2},\stackrel{\beta}{\Phi},\stackrel{\mathcal{E}}{\Upsilon},0\bigg)&\text{ for $l=\omega-2$}\\
. \\
.\\
.\\
\Theta(E\left(\frac{k-1}{4}\right)+2^{\omega-2}-1,n,p,q,r,\alpha,\omega,0)=\bigg(E\left(\frac{k_{1,2}-1}{4}\right)+2^{\omega-3}-1,i_{1,2},\stackrel{T_1}{Z_1},\stackrel{T_0}{Z_0},\stackrel{T_2}{Z_2},\stackrel{\beta}{\Phi},\stackrel{\mathcal{E}}{\Upsilon},0\bigg)&\text{ for $l=2$}\\
\Theta(E\left(\frac{k-1}{2}\right)+2^{\omega-1}-1,n,p,q,r,\alpha,\omega,0)=\Theta\bigg(E\left(\frac{k_{1,2}-1}{2}\right)+2^{\omega-2}-1,i_{1,2},\stackrel{T_1}{Z_1},\stackrel{T_0}{Z_0},\stackrel{T_2}{Z_2},\stackrel{\beta}{\Phi},\stackrel{\mathcal{E}}{\Upsilon},0\bigg)&\text{ for $l=1$}
\end{array}\right.
\end{equation}
\normalsize
Then, $\aleph_\omega(n,1,k,p,q,r,\alpha,\omega)=\aleph_\omega\left(i_{1,2},1,k_{1,2},\stackrel{T_1}{Z_1},\stackrel{T_0}{Z_0},\stackrel{T_2}{Z_2},\stackrel{\beta}{\Phi},\stackrel{\mathcal{E}}{\Upsilon}\right)$ so the relation \eqref{Sol1} is proven and the first equality \eqref{FirstEquality} is satisfied.\newline 
The second equality corresponds to
\footnotesize
\begin{multline}
\Gamma(\omega,p,q,r,\alpha,i)
\bigg[\prod\limits_{j_1=1}^{\omega-1}\prod\limits_{k_1=1}^{2^{j_1-1}}\Gamma(\rho_\omega(i_1,1,k_1,j_1,T_1,T_0,T_2,\beta,\mathcal{E}),\rho_1(i_1,1,k_1,j_1,T_1,T_0,T_2,\beta,\mathcal{E}),\\
\rho_0(i_1,1,k_1,j_1,T_1,T_0,T_2,\beta,\mathcal{E}),\rho_2(i_1,1,k_1,j_1,T_1,T_0,T_2,\beta,\mathcal{E}),\rho_\alpha(i_1,1,k_1,j_1,T_1,T_0,T_2,\beta,\mathcal{E}),\\ 
\Theta(k_1-2+2^{j_1-1},i_1,T_1,T_0,T_2,\beta,\mathcal{E},0))\bigg] 
 \bigg[\prod\limits_{j_2=1}^{\omega-1}\prod\limits_{k_2=1}^{2^{j_2-1}}
\Gamma(\rho_\omega(i_2,1,k_2,j_2,Z_1,Z_0,Z_2,\Phi,\Upsilon),\rho_1(i_2,1,k_2,j_2,Z_1,Z_0,Z_2,\Phi,\Upsilon),\\
\rho_0(i_2,1,k_2,j_2,Z_1,Z_0,Z_2,\Phi,\Upsilon), 
 \rho_2(i_2,1,k_2,j_2,Z_1,Z_0,Z_2,\Phi,\Upsilon),\rho_\alpha(i_2,1,k_2,j_2,Z_1,Z_0,Z_2,\Phi,\Upsilon),\\
 \Theta(k_2-2+2^{j_2-1},i_2,Z_1,Z_0,Z_2,\Phi,\Upsilon,0))\bigg] \\
 \stackrel{?}{=}
 \prod\limits_{j=1}^{\omega}\prod\limits_{k=1}^{2^{j-1}}\Gamma(\rho_\omega(n,1,k,j,p,q,r,\alpha,\omega),\rho_1(n,1,k,j,p,q,r,\alpha,\omega),\rho_0(n,1,k,j,p,q,r,\alpha,\omega), \\
 \rho_2(n,1,k,j,p,q,r,\alpha,\omega),\rho_\alpha(n,1,k,j,p,q,r,\alpha,\omega),\Theta(k-2+2^{j-1},n,p,q,r,\alpha,\omega,0))
\label{SecondEquality}
\end{multline}
\normalsize
First, there is the same number of terms in both sides of \eqref{SecondEquality}. Since $\sum\limits_{x=1}^y 2^{x-1}=2^y-1$, the right member is composed of $2^\omega -1$ terms. The left member is composed of $1+2(2^{\omega-1}-1)=2^\omega-1$ terms. \newline
Second, the term $k=1$ and $j=1$ in the right member corresponds to the individual term $\Gamma(\omega,p,q,r,\alpha,i)$ of the left member (direct application of the definition).\newline 
Third, the application of \eqref{Theta_rel} with $y=j-1$ leads to 
\footnotesize
\begin{equation}
\rho_x\left(n,1,k,j,p,q,r,\alpha,\omega\right)=
\left\lbrace
\begin{array}{lc}
\rho_x\left(i_1,1,k_1,j_1,T_1,T_0,T_2,\beta,\mathcal{E}\right)&\text{ if $k\leq 2^{j-2}$}\\
\rho_x\left(i_2,1,k_2,j_2,Z_1,Z_0,Z_2,\Phi,\Upsilon,0\right)&\text{ if $2^{j-2}<k$}\\
\end{array}\right.
\label{Sol2}
\end{equation}
\normalsize
with $k_1=k$, $k_2=k-2^{j-2}$ and $j_1=j_2=j-1\geq 1$. Like \eqref{Sol1}, the proof of \eqref{Sol2} is only presented for the case $x=\omega$:
\begin{multline}
\rho_\omega(n,1,k,j,p,q,r,\alpha,\omega)=\stackrel{\mathcal{E}}{\Upsilon}\bigg(\stackrel{\mathcal{E}}{\Upsilon}\bigg(...\bigg(\stackrel{\mathcal{E}}{\Upsilon}\bigg(\stackrel{\mathcal{E}}{\Upsilon}\bigg(\omega,\Theta\bigg(E\left(\frac{k-1}{2^{j-1}}\right),n,p,q,r,\alpha,\omega,0\bigg)\bigg),\\ \Theta\bigg(E\left(\frac{k-1}{2^{j-2}}\right)+1,n,p,q,r,\alpha,\omega,0\bigg)\bigg),...\bigg), \Theta\bigg(E\left(\frac{k-1}{4}\right)+2^{j-3}-1,n,p,q,r,\alpha,\omega,0\bigg)\bigg),\\
\Theta\bigg(E\left(\frac{k-1}{2}\right)+2^{j-2}-1,n,p,q,r,\alpha,\omega,0\bigg)\bigg)
\label{Int2}
\end{multline}
\\Since $k\leq 2^{j-1}$,
\begin{equation}
\begin{aligned}
E\left(\frac{k-1}{2^{j-1}}\right)=0\Rightarrow\Theta\bigg(E\left(\frac{k-1}{2^{j-1}}\right),n,p,q,r,\alpha,\omega,0\bigg)&=\Theta(0,n,p,q,r,\alpha,\omega,0)\\
&=\theta(n,p,q,r,\alpha,\omega,0,\omega-1)\\
&=i
\end{aligned}
\end{equation}
Then, \eqref{Int2} becomes
\footnotesize
\begin{multline}
\rho_\omega(n,1,k,j,p,q,r,\alpha,\omega)=\stackrel{\mathcal{E}}{\Upsilon}\bigg(\stackrel{\mathcal{E}}{\Upsilon}\bigg(...\bigg(\stackrel{\mathcal{E}}{\Upsilon}\bigg(\stackrel{\mathcal{E}}{\Upsilon}(\omega,i), \Theta\bigg(E\left(\frac{k-1}{2^{j-2}}\right)+1,n,p,q,r,\alpha,\omega,0\bigg)\bigg),...\bigg),\\ \Theta\bigg(E\left(\frac{k-1}{4}\right)+2^{j-3}-1,n,p,q,r,\alpha,\omega,0\bigg)\bigg),\Theta\bigg(E\left(\frac{k-1}{2}\right)+2^{j-2}-1,n,p,q,r,\alpha,\omega,0\bigg)\bigg)
\end{multline}
\normalsize
Regarding the other part of the equality \eqref{Sol2},
\scriptsize
\begin{multline}
\rho_\omega\left(i_{1,2},1,k_{1,2},j-1,\stackrel{T_1}{Z_1},\stackrel{T_0}{Z_0},\stackrel{T_2}{Z_2},\stackrel{\beta}{\Phi},\stackrel{\mathcal{E}}{\Upsilon}\right)=\stackrel{\mathcal{E}}{\Upsilon}\bigg(\stackrel{\mathcal{E}}{\Upsilon}\bigg(...\bigg(\stackrel{\mathcal{E}}{\Upsilon}\bigg(\stackrel{\mathcal{E}}{\Upsilon}(\omega,i),\Theta\bigg(E\left(\frac{k_{1,2}-1}{2^{j-2}}\right),i_{1,2},\stackrel{T_1}{Z_1},\stackrel{T_0}{Z_0},\stackrel{T_2}{Z_2},\stackrel{\beta}{\Phi},\stackrel{\mathcal{E}}{\Upsilon},0\bigg)\bigg),\\ \Theta\bigg(E\left(\frac{k_{1,2}-1}{2^{j-3}}\right)+1,i_{1,2},\stackrel{T_1}{Z_1},\stackrel{T_0}{Z_0},\stackrel{T_2}{Z_2},\stackrel{\beta}{\Phi},\stackrel{\mathcal{E}}{\Upsilon},0\bigg)\bigg),...\bigg),\Theta\bigg(E\left(\frac{k_{1,2}-1}{4}\right)+2^{j-4}-1,i_{1,2},\stackrel{T_1}{Z_1},\stackrel{T_0}{Z_0},\stackrel{T_2}{Z_2},\stackrel{\beta}{\Phi},\stackrel{\mathcal{E}}{\Upsilon},0\bigg)\bigg),\\ \Theta\bigg(E\left(\frac{k_{1,2}-1}{2}\right)+2^{j-3}-1,i_{1,2},\stackrel{T_1}{Z_1},\stackrel{T_0}{Z_0},\stackrel{T_2}{Z_2},\stackrel{\beta}{\Phi},\stackrel{\mathcal{E}}{\Upsilon},0\bigg)\bigg)
\end{multline}
\normalsize
The relation \eqref{Theta_rel} implies
\scriptsize
\begin{equation}
\left\lbrace
\begin{array}{lc}
\Theta(E\left(\frac{k-1}{2^{j-2}}\right)+1,n,p,q,r,\alpha,\omega,0)=\Theta\bigg(E\left(\frac{k_{1,2}-1}{2^{j-2}}\right),i_{1,2},\stackrel{T_1}{Z_1},\stackrel{T_0}{Z_0},\stackrel{T_2}{Z_2},\stackrel{\beta}{\Phi},\stackrel{\mathcal{E}}{\Upsilon},0\bigg)&\text{ for $l=j-2$}\\
\Theta(E\left(\frac{k-1}{2^{j-3}}\right)+3,n,p,q,r,\alpha,\omega,0)=\Theta\bigg(E\left(\frac{k_{1,2}-1}{2^{j-3}}\right)+1,i_{1,2},\stackrel{T_1}{Z_1},\stackrel{T_0}{Z_0},\stackrel{T_2}{Z_2},\stackrel{\beta}{\Phi},\stackrel{\mathcal{E}}{\Upsilon},0\bigg)&\text{ for $l=j-3$}\\
. \\
.\\
.\\
\Theta(E\left(\frac{k-1}{4}\right)+2^{j-3}-1,n,p,q,r,\alpha,\omega,0)=\bigg(E\left(\frac{k_{1,2}-1}{4}\right)+2^{j-4}-1,i_{1,2},\stackrel{T_1}{Z_1},\stackrel{T_0}{Z_0},\stackrel{T_2}{Z_2},\stackrel{\beta}{\Phi},\stackrel{\mathcal{E}}{\Upsilon},0\bigg)&\text{ for $l=2$}\\
\Theta(E\left(\frac{k-1}{2}\right)+2^{j-2}-1,n,p,q,r,\alpha,\omega,0)=\Theta\bigg(E\left(\frac{k_{1,2}-1}{2}\right)+2^{j-3}-1,i_{1,2},\stackrel{T_1}{Z_1},\stackrel{T_0}{Z_0},\stackrel{T_2}{Z_2},\stackrel{\beta}{\Phi},\stackrel{\mathcal{E}}{\Upsilon},0\bigg)&\text{ for $l=1$}
\end{array}\right.
\end{equation}
\normalsize
Then, $\rho_\omega(n,1,k,j,p,q,r,\alpha,\omega)=\rho_\omega\left(i_{1,2},1,k_{1,2},j-1,\stackrel{T_1}{Z_1},\stackrel{T_0}{Z_0},\stackrel{T_2}{Z_2},\stackrel{\beta}{\Phi},\stackrel{\mathcal{E}}{\Upsilon}\right)$. So, there is the following association between the right and the left members of \eqref{SecondEquality}: 
\begin{center}
\begin{tabular}{cc}
Right member & Left member \\
$j=1,~k=1$ & First term \\
$j=2,~k=1$ & $j_1=1,~k_1=1$ \\
$j=2,~k=2$ & $j_2=1,~k_2=1$ \\
$j=3,~k=1$ & $j_1=1,~k_1=1$ \\
$j=3,~k=2$ & $j_1=2,~k_2=2$ \\
$j=3,~k=3$ & $j_2=1,~k_1=1$ \\
$j=3,~k=4$ & $j_2=2,~k_2=2$ \\
$j=4$ ...
\end{tabular}
\end{center}
The second equality \eqref{SecondEquality} is then satisfied and the solution \eqref{ucoef} holds $\forall\omega\in\mathbb{N}^*$. The proof of the theorem is then completed.

\newpage

\bibliographystyle{plain}
\bibliography{Biblio}




%
%

\end{document}